\documentclass[]{article}


\usepackage{amsthm}
\usepackage{amsmath}
\usepackage{amssymb}
\usepackage{graphicx}
\usepackage{subfigure}
\usepackage{tikz}
\usepackage{enumerate}
\usetikzlibrary{arrows}
\usepackage{url}
\usepackage{authblk}

\newcommand{\RR}{\mathbb{R}}
\newcommand{\SYSTEMWR}{(\ref{eq:general_system})-(\ref{eq:reset_condition})}
\newcommand{\SYSTEMWRLinear}{(\ref{eq:general_system})-(\ref{eq:reset_condition})-(\ref{eq:linear_system})}
\newcommand{\s}{\mathfrak{s}}

\newcommand{\bx}{\bar{x}}
\newcommand{\R}{{\mathcal{R}}}
\newcommand{\LL}{{\mathcal{L}}}
\newcommand{\Firstpaper}{\cite{GraKruCle13}}
\newcommand{\hphi}{\hat{\varphi}}
\newcommand{\hdelta}{\hat{\delta}}
\newcommand{\conds}{H.1-H.2}

\renewcommand*\thefigure{\thesection.\arabic{figure}}

\makeatletter
\renewcommand\p@subfigure{\thefigure}
\makeatother
\setcounter{lofdepth}{2}

\usepackage{xcolor}
\usepackage[normalem]{ulem}

\usepackage{cancel}

\newtheorem{lemma}{Lemma}[section]
\newtheorem{proposition}{Proposition}[section]
\newtheorem{remark}{Remark}[section]
\newtheorem{corol}{Corollary}[section]
\newtheorem{definition}{Definition}[section]

\numberwithin{equation}{section}
\numberwithin{definition}{section}
\numberwithin{remark}{section}
\numberwithin{proposition}{section}
\numberwithin{lemma}{section}
\numberwithin{figure}{section}


\graphicspath{
{./}
{figures/}
{figures/dose/}
{figures/d-invA/T1d9/}
{figures/freq_response/}
{figures/impacts_pperiod/}
{figures/impacts_pperiod_Acorrection/}
{figures/impacts_pperiodT/}
{figures/impacts_pperiodT_Acorrection/}
{figures/bif_R/}
{figures/bif_L/}
{figures/T-periodic/}
{figures/bif1_R/}
{figures/bif1_L/}
{figures/firing-rate_typical/}
{figures/p-orbits/T-per_R-bif/}
{figures/p-orbits/T-per_L-bif/}
{figures/maps/}
{figures/boundary/}
{figures/map_boundary/}
{figures/firing-rate_bell-shape/}
}
\begin{document}
\title{Firing-rate, symbolic dynamics and frequency dependence in
periodically driven spiking models: a piecewise-smooth
approach\thanks{This work has been financially supported by the Large
Scale Initiative Action REGATE,  by the Spanish MINECO-FEDER Grants
MTM2009-06973, MTM2012-31714 and the Catalan Grant 2009SGR859.}}
 
\author[1]{Albert Granados\thanks{albert.granados@inria.fr}}
\author[2]{Maciej Krupa\thanks{maciej.krupa@inria.fr}}
\affil[1,2]{Inria Paris-Rocquencourt}
\date{}
\maketitle

\begin{abstract}
In this work we consider a periodically forced generic
integrate-and-fire model with a unique attracting equilibrium in the
subthreshold dynamics and study the dependence of the firing-rate on
the frequency of the drive.  In an earlier study we have obtained
rigorous results on the bifurcation structure in such systems, with
emphasis on the relation between the firing-rate and the rotation
number of the existing periodic orbits.  In this work we study how
these bifurcation structures behave upon variation of the frequency of
the input. This allows us to show that the dependence of the
firing-rate on frequency of the drive follows a devil's staircase with
non-monotonic steps and that there is an optimal response in the whole
frequency domain.  We also characterize certain bounded frequency
windows in which the firing-rate exhibits a bell-shaped envelope with
a global maximum.
\end{abstract}

\unitlength=\textwidth
\section{Introduction}\label{sec:intro}
In this work we study periodically driven excitable systems of integrate and
fire type, widely used to model the dynamics of the membrane potential
of a neuron. We assume that the periodic
forcing term, or the external input, satisfies a constraint which we refer to as
dose conservation. The constraint is defined as fixing the total amount
(cumulated dose) in a given time (observation time). As argued
in~\S\ref{sec:system_description}, this is equivalent to fixing the average rate of the
cell stimulus, for example the applied current, the amount of neurotransmitter,
or the amount of hormone per time unit, respectively. One of the goals of this
work is to prove that the system exhibits an optimal response, in terms of the
firing-rate, which can be achieved by tuning the system parameters, like period
or amplitude, to obtain the maximal firing-rate of the system (the average
number of spikes per unit time).  In particular, we consider square wave input
and focus on the variation of the period while either the amplitude or the
duration of the pulse is fixed.\\

The mathematical content of our study is to investigate the
bifurcation structure of periodic orbits, as they completely determine
the dynamics for the class of systems we study. In contrast to other
studies~\cite{KeeHopRin81,CooBre99,CooOsb00,Coo01,CooOweSmi01,CooThuWed12,TouBre08,TouBre09,LaiCoo05,JimMihBroNieRub13},
which use Poincar\'e maps, our approach is by means of a stroboscopic
map. This map is discontinuous, but, for most parameter values, it has
the advantage of being contracting on the continuous components.
Hence, as shown in~\Firstpaper{}, results in non-smooth systems can be
applied to get a complete description of periodic orbits and their
rotation numbers. In this work we use this information to understand
the behaviour of the firing-rate under frequency variation of the
drive. In particular, we prove the existence of an optimal response
corresponding to the maximal firing-rate.

The setting we have chosen for this paper is very simple from the
biological point of view, but it has the advantage of being
mathematically tractable.  This is mainly given by assuming that the
unforced system possesses a unique attracting point in the
subthreshold dynamics. Even simple generalizations, for example
allowing the system to undergo a subthreshold saddle-node bifurcation,
lead to complications, as the stroboscopic map can be expansive , so
that the existence of a globally stable attractor cannot be expected.
In particular it is not clear if the firing-rate can be uniquely
defined in the context of such generalizations, as well as how to
obtain rigorous results about it.

The main result of this paper is a complete description of the
response of the system, in terms of the firing-rate, to frequency
variation. In particular, we prove that the firing-rate is maximal for
a certain frequency which depends on the features of the stimulus
(amplitude and duty cycle) as well as on the dynamical properties of
the system. In addition, we provide detailed information on how to
compute such frequency and the corresponding maximal value of the
firing-rate.\\

\noindent This work is organized as follows.\\
In \S\ref{sec:system_description} we describe the integrate-and-fire
system, provide some definitions and state our results. In
\S\ref{sec:bif_scenario} we describe a bifurcation scenario
established in our earlier work~\Firstpaper{}, which we use to prove
our results. In~\S\ref{sec:freq_properties} we describe how the
bifurcations in this parameter space change under frequency variation
of the input.  We also provide a precise statement of our results and
their proofs.  In~\S\ref{sec:optimization} we present a result
regarding the optimization of the firing-rate in terms of frequency of
the input. Finally, in \S\ref{sec:examples} we apply these results to
an example, a linear integrate-and-fire neuron (LIF), to completely
describe the firing-rate response under frequency variation.

\section{The model, definitions and statement of results}\label{sec:system_description}
In the context of neuronal modeling or hormone segregation one
relies on excitable systems, which are able to exhibit certain responses
given by large amplitude oscillations (spikes) as a response to
certain stimulation. One of the most extended type of systems
exhibiting this behavior are hybrid systems (a generalization of the
so-called integrate-and-fire systems)
which can be seen as approximation of
slow/fast systems. That is, systems of the form
\begin{equation}
\dot{x}=f(x)+I(t),\,x\in\RR,
\label{eq:general_system}
\end{equation}
where $x$ represents an action potential or the output of the cell and
$I(t)$ an external stimulation, which could be the output of another
cell. Then, system~\eqref{eq:general_system} is submitted to the
reset condition
\begin{equation}
x=\theta \longrightarrow x=0,
\label{eq:reset_condition}
\end{equation}
that is, the trajectories of system~\eqref{eq:general_system} are
instantaneously reset to $0$ whenever they reach the threshold given by
$x=\theta$. Due to this instantaneous reset the solutions of the
system exhibit discontinuities which emulate the spikes. In this work,
we will consider $\theta$ a constant, although it is a common approach
to add certain dynamics to this threshold in order to model more
complex behaviours, as type III excitability~\cite{MenHugRin12}.

As mentioned in the introduction, we will assume in this paper that
the cell's input, $I(t)$, consists of a $T$-periodic square-wave
function, 
\begin{equation}
I(t)=\left\{
\begin{aligned}
&A&&\text{if }t\in\left(nT,nT+dT\right]\\
&0&&\text{if }t\in(nT+dT,(n+1)T],\\
\end{aligned}\right.
\label{eq:pulse}
\end{equation}
This is a well accepted, both in neuroscience and
neuroendocrinology, to assume that inputs to excitable cells are given
by functions of this form, as they occur in a pulsatile way. Other
works consider rectified sinusoidals as inputs to neurons in the
auditory brainstem~\cite{MenHugRin12}.\\
The square wave function $I(t)$ will be characterized by three parameters: its
amplitude $A$, its period $T$ and the \emph{duty cycle} $d$, which is the
duration of the pulse with respect to $T$.\\

As mentioned in~\S\ref{sec:intro}, the main goal of this work is to
study the response of the system in terms of the \emph{firing-rate}
(number of spikes per unite time). In particular, we are interested on
its optimization under the variation of parameters $A$, $d$ and $T$.
However, we impose a constraint that the total amount of the released
quantity be constant per stimulation period. We will refer to this as
\emph{dose conservation}, with the following biological question in
mind: given a certain available quantity, how does it have to be
released to the excitable cell in order to obtain from it the highest
firing-rate?  Assuming that the experimental observation time, $\tau$,
is large enough relative to the different periods of the signal
$I(t)$, $T$, the total amount of released quantity can be approximated
by
\begin{equation*}
\int_0^\tau I(t)dt\sim Q\tau,
\end{equation*}
where $Q$ is the average value of $I$ over one period,
\begin{equation}\label{eq:dose}
Q:=\frac{1}{T}\int_0^TI(t)dt,
\end{equation}
which we will call \emph{dose}. Therefore, the cumulative dose
released to the cell will be maintained as long as the dose $Q$ is
conserved.\\
In order to add the dose conservation to our system, one has only to
keep constant the product $Ad$, which can be performed in different
ways. In this work we will focus on two of them, the trivial one by
keeping constant both $A$ and $d$ (width correction) and also the
other one varying both $A$ and $d$ so that the total duration of the
pulse, $\Delta=dT$, is constant (amplitude correction).\\
In section~\ref{sec:bifurcation_analysis} we will obtain theoretical
results for the first case, which will be used also to study the
behavior of the firing-rate under frequency variation for the second
case in an example in~\S\ref{sec:amplitude_correction}.

As mentioned above, the reset condition~\eqref{eq:reset_condition}
introduces discontinuities to the solutions of the system. However,
despite these discontinuities, the solutions of the non-autonomous
system~\SYSTEMWR{} are well defined. Let $\phi(t;t_0,x_0)$ be the
solution of system~\SYSTEMWR{} fulfilling $\phi(t_0;t_0,x_0)=x_0$. As
usual in piecewise-smooth systems, the flow $\phi$ is obtained by
properly matching the solutions for $0<t \mod T\le dT$ and $dT<t\mod
T\le T$ combined with the reset
condition~\eqref{eq:reset_condition}. This makes the flow $\phi$
non-differentiable at $t\mod T=dT$ and $t\mod T=T$ and discontinuous
at the spikes times, those at which the threshold is reached.
\begin{remark}
As we are interested in periodic orbits, although system~\SYSTEMWR{} is
non-autonomous, we will assume from now on that $t_0=0$, and we will write
$\phi(t;x_0)$. Note that if $t_0\ne0$, the initial condition for a periodic
orbit (fixed point of the stroboscopic map) may be different, although it 
still exists.
\end{remark}

Let us assume that the system
\begin{equation}
\dot{x}=f(x)
\label{eq:autonomous_system}
\end{equation}
satisfies the following conditions.
\begin{enumerate}[H.1]
\item \eqref{eq:autonomous_system} possesses an attracting equilibrium point
\begin{equation}
0<\bx<\theta,
\label{eq:critical_point}
\end{equation}
\item $f(x)$ is monotonic decreasing function in $[0,\theta]$:
\begin{equation*}
f'(x)<0,\;0\le x\le \theta.
\end{equation*}
\end{enumerate}

As shown in~\Firstpaper{}, system~\SYSTEMWR{} possesses attracting
periodic orbits for almost all (except in a cantor set with zero
measure) values of $A$, $\theta$ and $d$ as long as
conditions~\conds{} are satisfied and $T$ is large or small enough.
These periodic orbits may be continuous (subthreshold dynamics) or
discontinuous (spiking dynamics).  Let $\phi(t;x_0)$, with
$\phi(0;x_0)=x_0$, be an orbit of the non-autonomous
system~\SYSTEMWR{}.  Then we consider
\begin{equation}\label{eq:defirate}
r(x_0)=\lim_{\tau\to\infty}\frac{\#(\mbox{spikes performed by } \phi(t;x_0)
\mbox{ for } t\in[0,\tau])}{\tau},
\end{equation}
where \# means \emph{number of}, if this limit exists. We then define the
\emph{firing-rate}.
\begin{definition}\label{def:firing-rate}
If $r(x_0)$ does not depend on $x_0$ then we
call it $r$, the firing-rate.
\end{definition}
The firing-rate can be seen as the average number of spikes per unit
time performed by the system along a periodic orbit.\\
Note that the firing-rate is well defined whenever there exists a unique
attracting periodic orbit. However, it will in general depend on the system
parameters $T$, $A$ and $d$.

Unlike in other approaches
(\cite{KeeHopRin81,CooBre99,CooOsb00,CooOweSmi01,CooThuWed12,TouBre08}),
in order to study integrate-and-fire model~\SYSTEMWR{} our essential
tool will be the stroboscopic map. Given an initial condition $x_0$,
this map consists in flowing the system~\SYSTEMWR{} for a time $T$,
the period of the drive, and is the usual tool used when dealing with
(smooth) periodic non-autonomous systems. In other words, it becomes
\begin{equation}
\s(x_0)=\phi(T;x_0),
\label{eq:stroboscopic_map}
\end{equation}
where $\phi(t;x_0)$ is the flow associated with~\SYSTEMWR{}. In the
mentioned works, authors considered a Poincar\'e map from the threshold
to itself (when spikes occur), added time as a variable and studied
the times given by the spikes.\\
As it will be detailed below in~\S\ref{sec:bif_scenario}, the
stroboscopic map will be piecewise-defined and discontinuous, and
hence it is typically avoided in periodically forced hybrid systems,
as one cannot apply classical results for regular smooth systems.
These discontinuities of the map will not be given by the spikes
performed by the trajectories of the system. On the contrary, the
stroboscopic will undergo a discontinuity at those initial conditions
for which the number of spikes performed by the trajectories for
$t\in[0,T]$ changes (see Fig.~\ref{fig:boundary}).  Despite these
discontinuities, using results for non-smooth systems, the dynamics of
the map is completely understood (see~\Firstpaper{} for a discussion
and references). This includes the rotation number, also called
winding number, $\rho$, of all possible periodic orbits of the
stroboscopic map, which will be of special interest in our work.  The
rotation number is usually associated with circle maps and,
intuitively, measures the average rotation along trajectories when it
does not depend on its initial condition. Under certain conditions,
discontinuous piecewise-defined maps can be reduced to circle maps,
and, when a periodic orbit exists, the rotation number becomes the
ratio between the number of steps at the right of the discontinuity of
the map along the periodic orbits to its period
(see~\cite{AlsGamGraKru14} for more details).

As shown in~\cite{KeeHopRin81} (see also~\Firstpaper{} and
section~\S\ref{sec:bif_scenario} below), the rotation number of
the periodic orbits is well related with the number of spikes performed at each
period of a periodic orbit of the stroboscopic map. A crucial quantity will be
the average number of spikes by period of the stroboscopic map, which was named
in~\cite{KeeHopRin81} \emph{firing-number}, $\eta$. This is given more
precisely by the following definition.
\begin{definition}\label{def:firing_number}
Let $n$ be the total number of spikes performed by a $p$-periodic orbit of the
stroboscopic map $\s(x)$, $n,p\in\mathbb{N}$; then we define the 
firing-number as
\begin{equation}
\eta=\frac{n}{p},
\label{eq:average_spkes_pT}
\end{equation}
which is the average number of spikes per iteration of the stroboscopic map
along a periodic orbit.
\end{definition}
\begin{remark}
Then, assuming that the mentioned periodic orbit is attracting, the firing-rate
can be obtained from the firing-number as
\begin{equation}
r=\frac{\eta}{T}.
\label{eq:firing_rate}
\end{equation}
\end{remark}
As will be shown in~\S\ref{sec:freq_properties}
(Corollary~\ref{cor:regions}), depending on the value of the dose $Q$
defined in~\eqref{eq:dose} the firing-rate will exhibit qualitatively
different behaviors. This will bring us to consider a \emph{critical
dose}, which we define as follows.
\begin{definition}
The critical dose, $Q_c$, is the value of $A>0$ that places the equilibrium
point, $\bx$, of the system $\dot{x}=f(x)+A$ at the threshold; it is given by
\begin{equation}
f(\theta)+Q_c=0.
\label{eq:critical_dose_eq}
\end{equation}
\end{definition}
Note that $Q_c$ is the minimal dose that permits the system~\SYSTEMWR{} to
exhibit spikes when it is driven constantly, $I(t)=Q_c$ ($d=1$ and $A=Q_c$).

Our goal is to study the qualitative behavior of the firing-rate under
variation of the period of the input, $T$, for a chosen $Q$. We then prove the
following results when $A>0$ and $d\in(0,1)$ are kept constant (width correction
for dose conservation).
\begin{enumerate}
\item The firing-rate, obtained as the ratio of the firing-number
$\eta$ to $T$, follows a devil's staircase with monotonically
decreasing steps (see Figure~\ref{fig:fr_typical}) (except possibly in
a compact set of values of $T$). This is a consequence of
Propositions~\ref{prop:Tlarge} and~\ref{prop:Tsmall}
(Corollary~\ref{coro:devils_staircase}).
\item The firing-rate for low frequency inputs fulfills
\begin{equation*}
\lim_{T\to\infty}r(T)=\frac{d}{\delta},
\end{equation*}
where $\delta$ is the time needed by system $\dot{x}=f(x)+A$ to reach the
threshold from $x=0$. This is Proposition~\ref{prop:fr_Tlarge}.
\item If $I(t)$ is such that $Q=Ad<Q_c$, then the firing-rate becomes
zero for large enough frequencies. This is also a consequence of
Propositions~\ref{prop:Tlarge} and~\ref{prop:Tsmall}
(Corollary~\ref{cor:regions}).
\item If $Q>Q_c$, then
\begin{equation*}
\lim_{T\to0}r(T)=\frac{1}{\hdelta},
\end{equation*}
where $\hdelta>0$ is the time needed for the averaged system $\dot{x}=f(x)+Ad$ to
reach the threshold from $x=0$. This is Proposition~\ref{prop:fr_Tsmall}.
\item The firing-rate exhibits a global maximum and minimum in
$T\in(0,\infty)$.  Let $0<T_1<T_2$ such that $\eta(T)=1$ for
$T\in[T_1,T_2]$. Then, if $T_1$ is large enough, the maximal firing
rate occurs for $T=T_1$. The minimal one corresponds to the minimum
between $0$, $1/\hdelta$ and $1/T_2$. This is
Proposition~\ref{prop:optimization} and
Remark~\ref{rem:global_minimum}.
\end{enumerate}

\begin{figure}
\begin{center}
\includegraphics[width=0.4\textwidth,angle=-90]{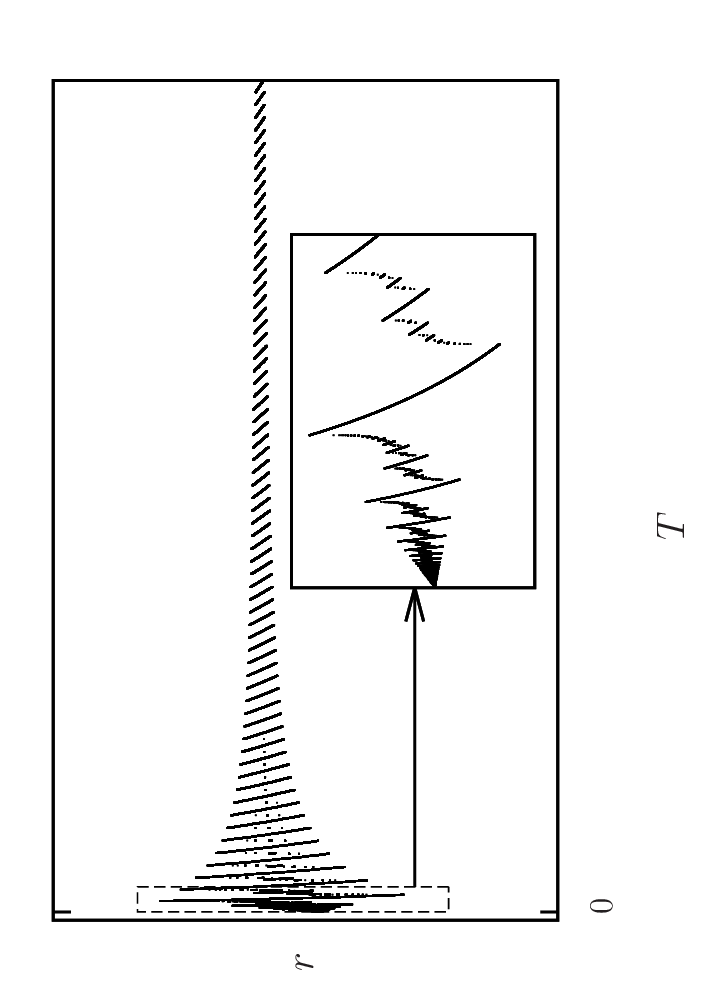}
\end{center}
\caption{Typical response of the firing-rate upon frequency variation.
It is given by a fractal structure of monotonically decreasing pieces,
following a devil's staircase. The width of the ``main pieces''
stabilizes at $O(\delta/d)$, and each exhibits a local maximum and
minimum. The firing-rate has limiting value $d/\delta$ when
$T\to\infty$, and $1/\hdelta$ when $T\to0$, and it exhibits a global
maximum and minimum. See text (Section~\ref{sec:system_description}) for the
definition of these parameters.}
\label{fig:fr_typical}
\end{figure}

\section{Bifurcation analysis}\label{sec:bifurcation_analysis}
\subsection{The two-dimensional parameter space}\label{sec:bif_scenario}
In this section we provide a summary of the results shown
in~\Firstpaper{}, see there for the details and proofs of what follows
in this section.\\

As mentioned in~\S\ref{sec:system_description}, due to the periodicity
of $I(t)$, we will use the stroboscopic
map~\eqref{eq:stroboscopic_map}, which is a discontinuous
piecewise-smooth map, in order to understand the dynamics of
system~\SYSTEMWR{}. This map is a smooth map (as regular as
$f(x)$~\eqref{eq:general_system}) in certain regions in the state
space $[0,\theta]$ characterized by the number of spikes performed by
$\phi$, the discontinuous flow associated with system~\SYSTEMWR{},
when flowed for a time $T$. This is because, in these regions, the
stroboscopic map becomes a composition of maps obtained by integrating
system~\eqref{eq:general_system} and reseting from $x=\theta$ to
$x=0$. Both types of intermediate maps are smooth. These regions in
the state space are separated by boundaries of the form $x=\Sigma_n$,
$\Sigma_n=\Sigma_n(A,T,d)$, where the stroboscopic map is
discontinuous. At the right of $x=\Sigma_n$ the trajectories
of~\SYSTEMWR{} exhibit $n$ spikes when flowed for a time $T$, whereas
at its left they exhibit $n-1$ spikes (see Figure~\ref{fig:boundary}
for $n=3$).\\
As the number of spikes can be arbitrarily large (for $A>0$ large
enough), the state space $[0,\theta]$ is potentially divided in an
infinite number of such regions. However, for fixed parameter values,
the state space is split in at most two regions, $[0,\Sigma_n)$ and
$[\Sigma_n,\theta)$, where the trajectories perform $n-1$ and $n$
spikes, respectively, when flowed during a time $T$. This comes from
the fact that the initial condition that separates different sets of
initial conditions leading to different number of spikes for
$t\in[0,T]$ is unique, as it is the one spiking exactly at $t=dT$. We
refer to~\Firstpaper{} for further details.
\begin{figure}
\begin{center}
\begin{picture}(1,0.5)
\put(0,0.35){
\subfigure[]
{\includegraphics[angle=-90,width=0.5\textwidth]{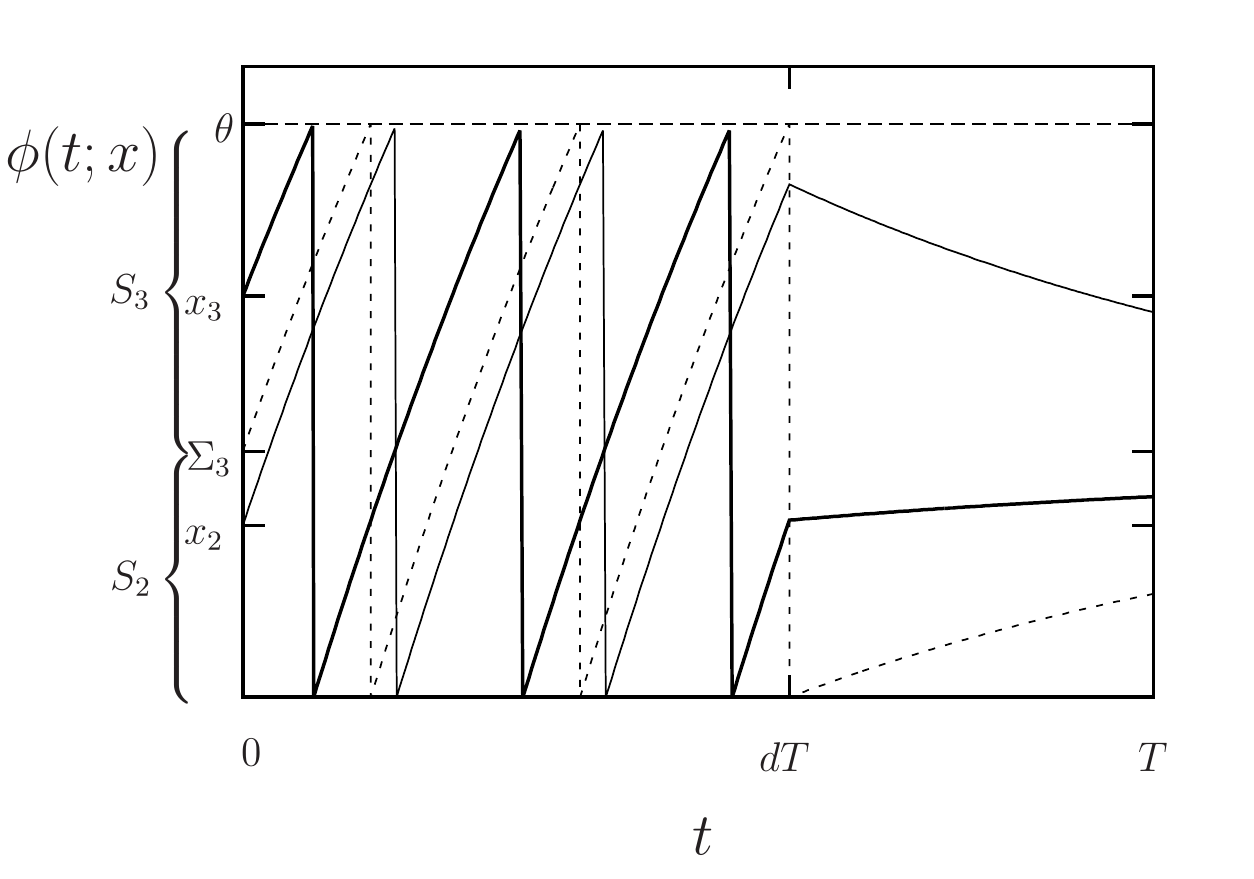}}
}
\put(0.5,0.35){
\subfigure[]
{\includegraphics[angle=-90,width=0.5\textwidth]{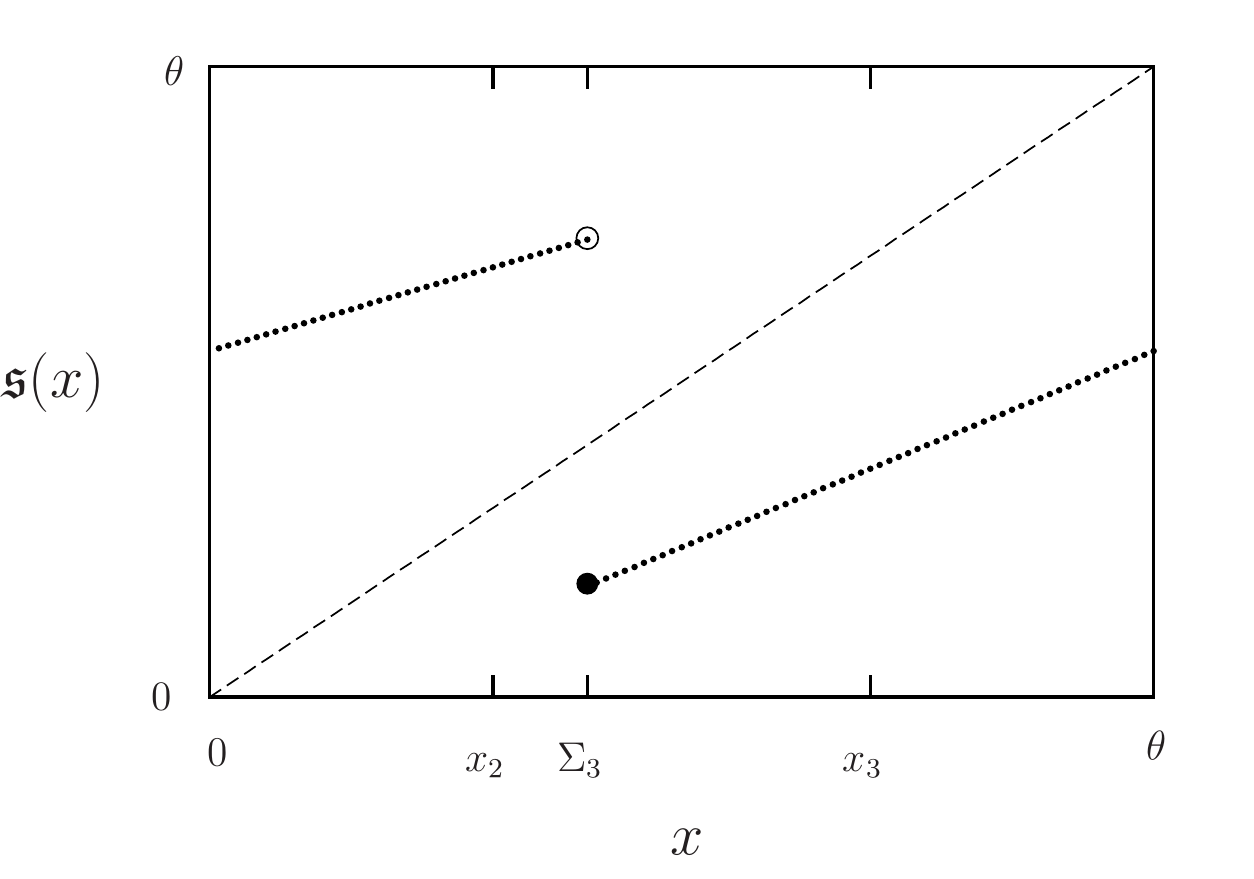}}
}
\end{picture}
\end{center}
\caption{In (a) the trajectories of systems~\SYSTEMWR{}. Dashed line:
trajectory with $\Sigma_3$ as initial condition. Solid thick line:
trajectory with $x_3>\Sigma_3$ as initial condition, which spikes $3$
times.  Solid thin line: trajectory with $x_2<\Sigma_3$ as initial
condition, which spikes $2$ times. In (b) the stroboscopic map, with a
discontinuity at $x=\Sigma_3$.}
\label{fig:boundary}
\end{figure}

The possible dynamics of the stroboscopic map, and hence of
system~\SYSTEMWR{}, is completely captured in the two-dimensional
parameter space $d\times 1/A$.  Thus, by understanding the bifurcation
structures in this parameter space one obtains a complete description
of the fixed points, periodic orbits, their rotation numbers and their
firing-rate.\\
Under the assumptions~\conds{}, and if $T$ is small or large enough,
the bifurcation scenario in the parameter space given by $d\times 1/A$
for $T>0$ is equivalent to the one shown in
Figure~\ref{fig:d-invA_generic}, which is described below and
rigorously proven in~\Firstpaper{}.
\begin{figure}
\begin{center}
\begin{picture}(1,0.5)
\put(0,0.35){
\subfigure[\label{fig:d-invA_generic_regions}]
{\includegraphics[angle=-90,width=0.5\textwidth]{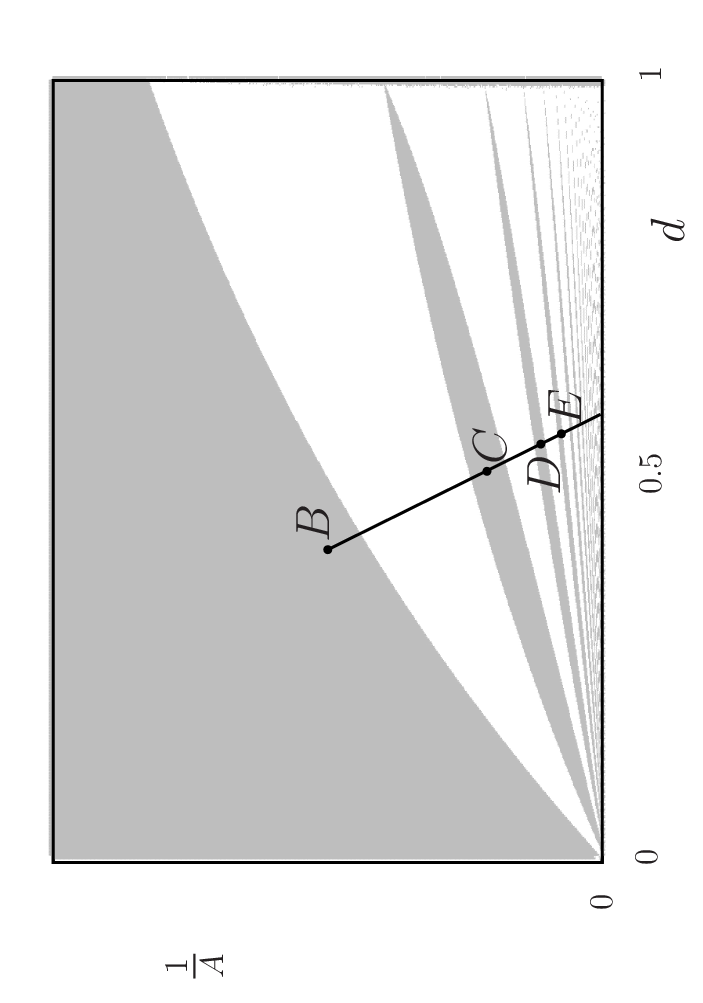}}
}
\put(0.5,0.35){
\subfigure[\label{fig:d-invA_generic_1dscann}]
{\includegraphics[angle=-90,width=0.5\textwidth]{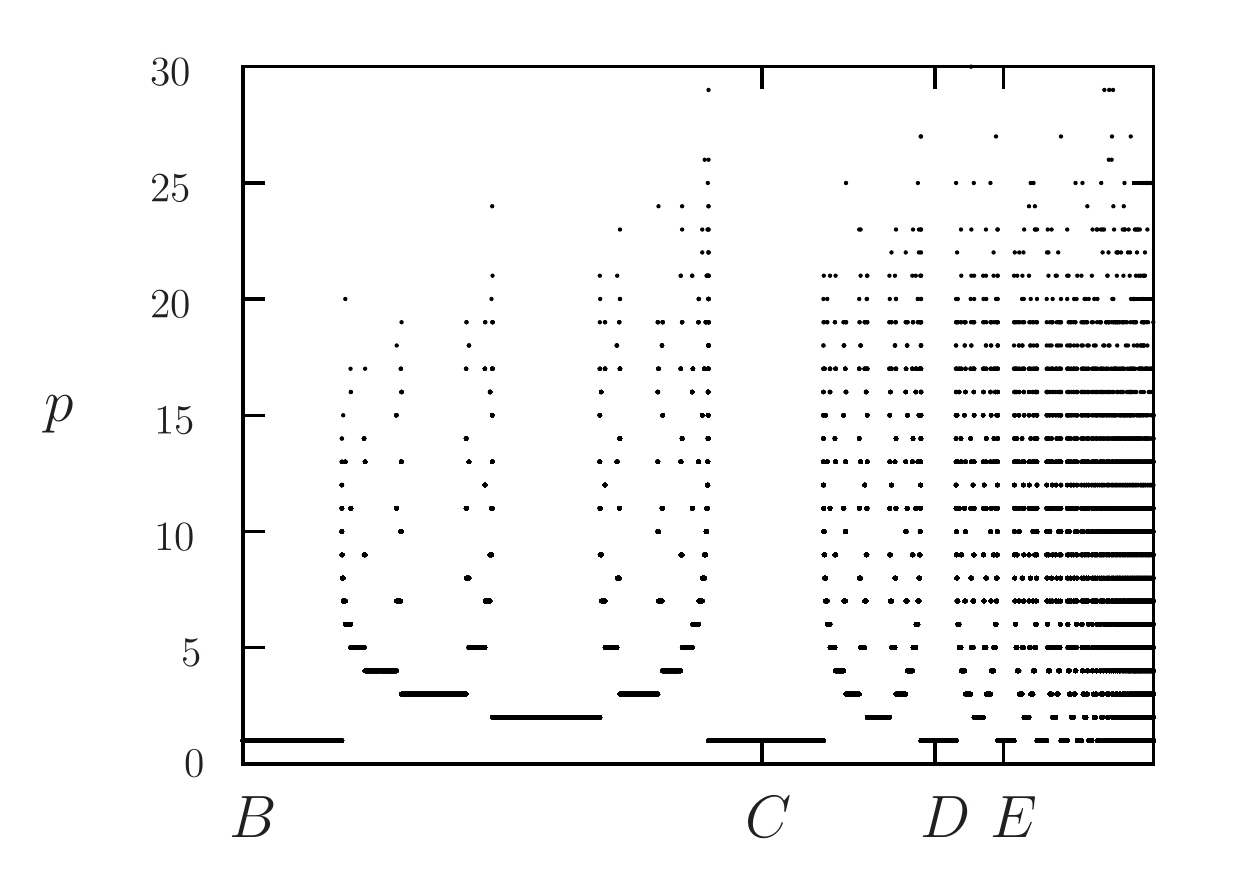}}
}
\end{picture}
\end{center}
\caption{(a) Bifurcation scenario for system~\SYSTEMWR{.} In gray
regions there exist $T$-periodic orbits. In the white regions, one
finds higher periodic orbits following and adding structure (see
text). In $B$, $C$, $D$ and $E$ one finds $T$-periodic orbits spiking
$0$, $1$, $2$ and $3$ times per period, respectively. (b) Periods of
the periodic orbits found along the segment shown in (a).}
\label{fig:d-invA_generic}
\end{figure}
As suggested in Figure~\ref{fig:d-invA_generic_regions}, there exists an
infinite number of regions (in gray) accumulating to the horizontal axis for
which only $T$-periodic orbits spiking $n$ times exist. These are fixed points,
$\bx_n$, of the stroboscopic map $\s(x)$~\eqref{eq:stroboscopic_map}. These
regions in parameter space are ordered, in the clockwise direction, in such a
way that these $T$-periodic orbits spike $0,1,2,3,\dots$ times per period. The
bifurcation curves that bound the gray regions are given by border collision
bifurcation of the map. That is, the fixed points of the stroboscopic map
$\bx_n$ collide with one of the boundaries, $\bx_n=\Sigma_{n}$
(Figures~\ref{fig:T-per_R-bif} and~\ref{fig:map_T_R-bif} for $n=3$) and
$\bx_n=\Sigma_{n+1}$ (Figures~\ref{fig:T-per_L-bif} and~\ref{fig:map_T_L-bif} for
$n=3$), and no longer exist. This defines the upper and lower bifurcation
curves, respectively, bounding each gray region as follows.
\begin{definition}\label{def:AnR_AnL}
For $d\in(0,1)$, we define $A_n^\R(d)$ and $A_n^\LL(d)$, $n\ge 1$, the values of
$A$ for which the fixed point $\bx_n$ collides with the boundaries
$\Sigma_{n}$ and $\Sigma_{n+1}$, respectively:
\begin{align*}
\lim_{A\to \left(A_n^\R\right)^+}\bx_n&=\Sigma_{n}\\
\lim_{A\to \left(A_n^\LL\right)^-}\bx_n&=\Sigma_{n+1}.
\end{align*}
The fixed point $\bx_0\in S_0$ undergoes only one border collision bifurcation,
when it collides with $\Sigma_1$ from the left.  This one occurs for $A=A_0(d)$,
\begin{equation*}
\lim_{A\to \left( A_0 \right)^-}\bx_0=\Sigma_1.
\end{equation*}
\end{definition}
Hence, a fixed point $\bx_n\in [0,\theta]$ will exist if
$A\in[A_n^\R,A_n^\LL)$.\\
\begin{remark}
The values $A_n^{\R,\LL}(d)$ depend also on $T$; we will explicitly specify this
when convinient.
\end{remark}
At the upper bifurcation curves ($A=A_n^\R(d)$), the fixed points collide with a
boundary from its right (Figures~\ref{fig:T-per_R-bif}
and~\ref{fig:map_T_R-bif}), and hence will be associated to the $\R$ symbol.  On
the lower ones ($A=A_n^\LL(d)$) fixed points collide with another boundary from
its left (see Figures~\ref{fig:T-per_L-bif} and~\ref{fig:map_T_L-bif}), and will
have associated the symbol $\LL$. Note that the stroboscopic map fulfills
$\s(\Sigma_n)=\s(\Sigma_n^+)$, and hence the fixed points no longer exist at their
left bifurcations whereas they still do for the right bifurcations (note the
gray shown in Figures~\ref{fig:T-per_L-bif} and~\ref{fig:map_T_L-bif}).
\begin{figure}
\begin{center}
\begin{picture}(1,0.8)
\put(0,0.8){
\subfigure[\label{fig:T-per_R-bif}]
{\includegraphics[angle=-90,width=0.5\textwidth]{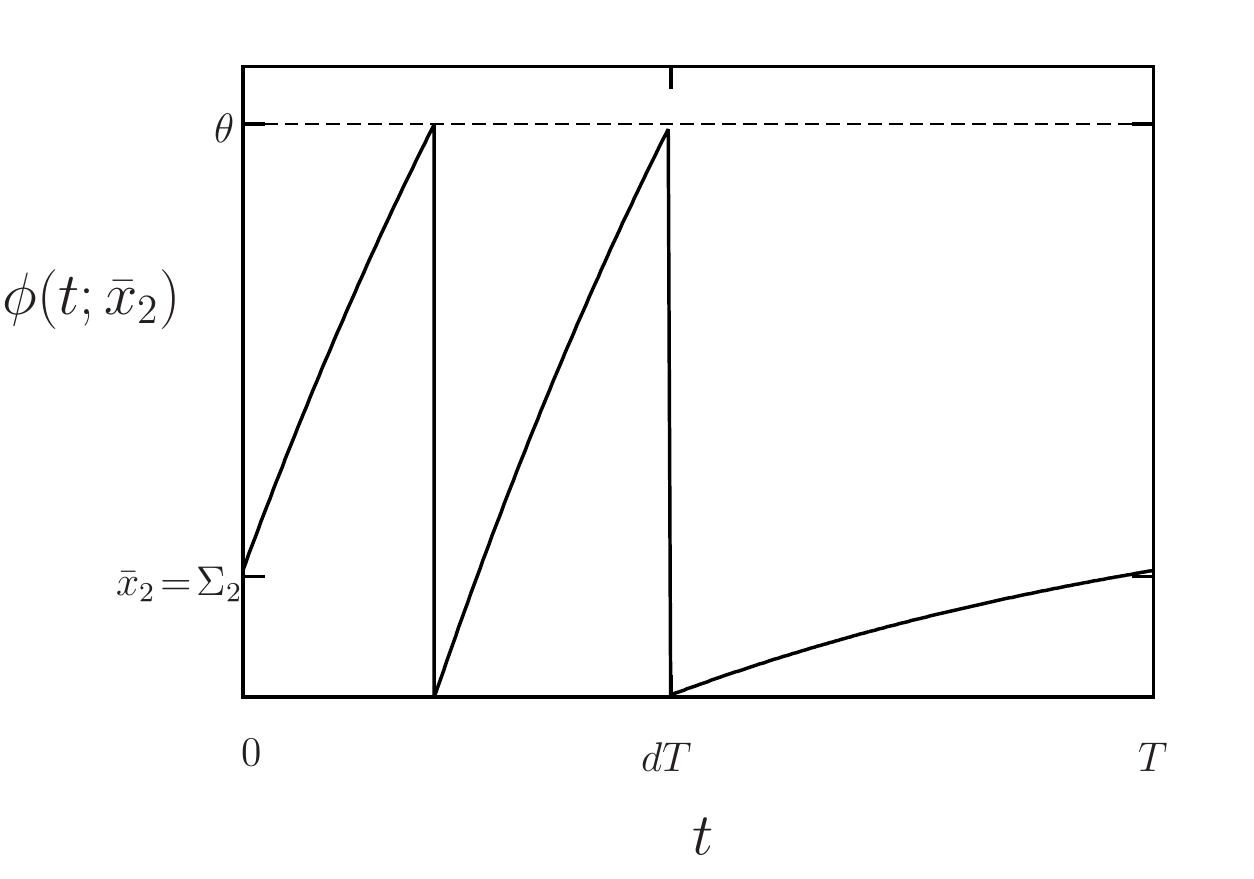}}
}
\put(0.5,0.8){
\subfigure[\label{fig:T-per_2}]
{\includegraphics[angle=-90,width=0.5\textwidth]{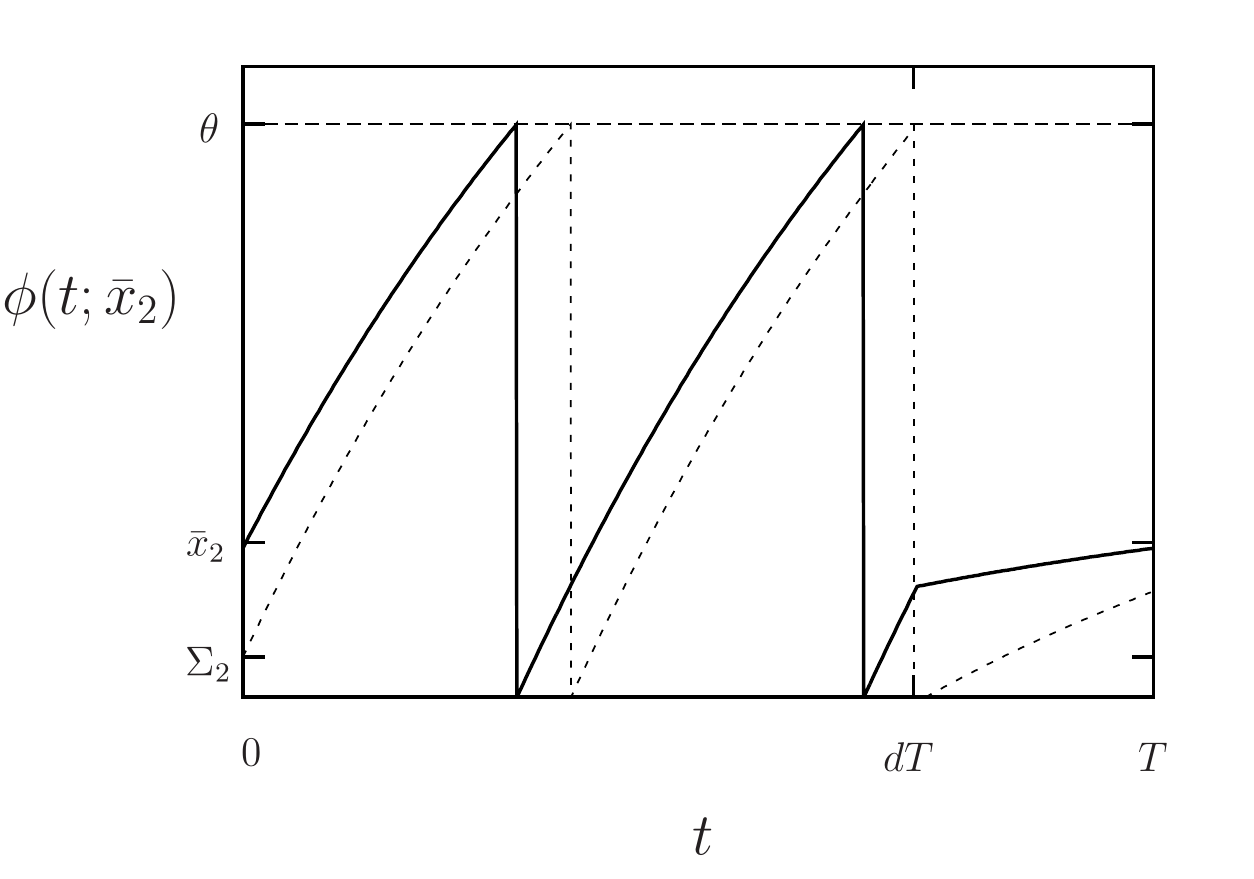}}
}
\put(0,0.4){
\subfigure[\label{fig:T-per_3}]
{\includegraphics[angle=-90,width=0.5\textwidth]{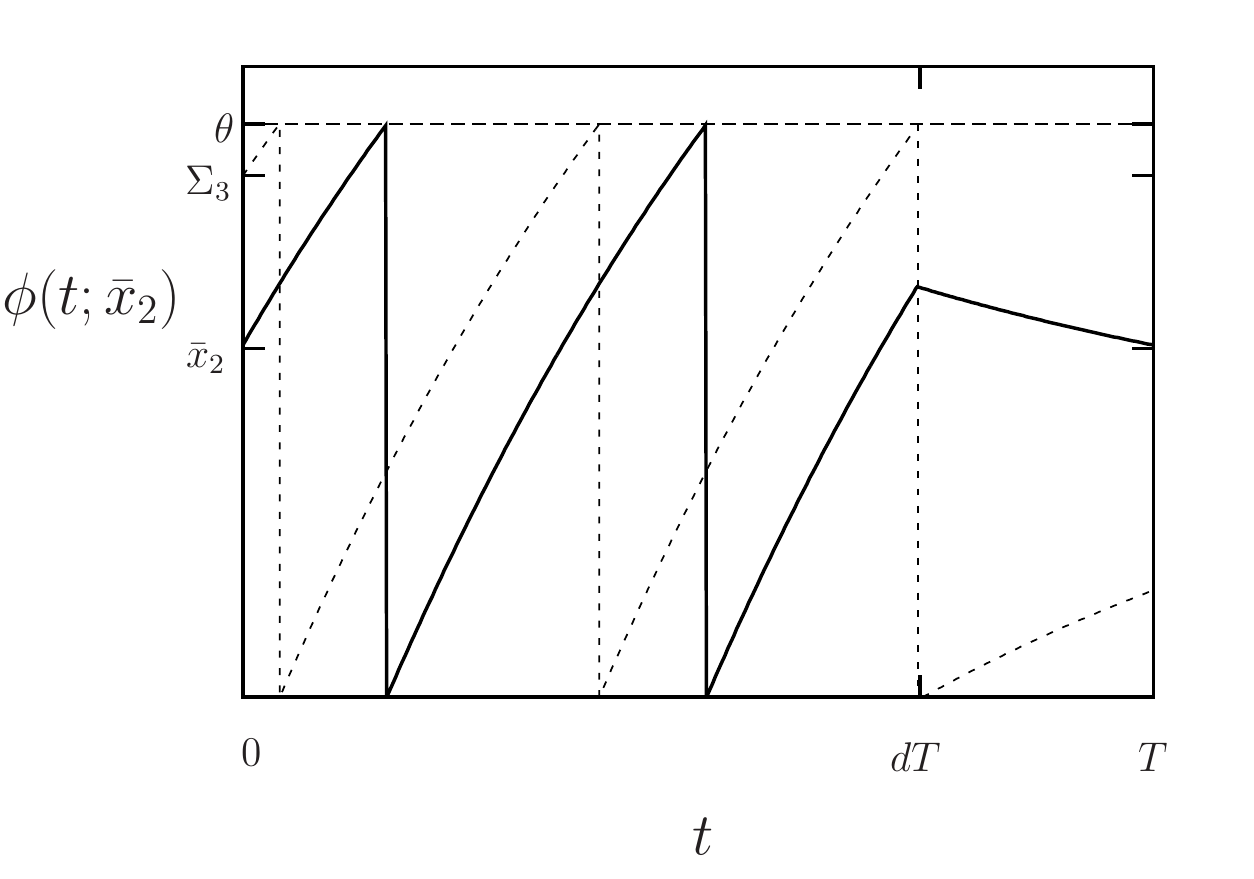}}
}
\put(0.5,0.4){
\subfigure[\label{fig:T-per_L-bif}]
{\includegraphics[angle=-90,width=0.5\textwidth]{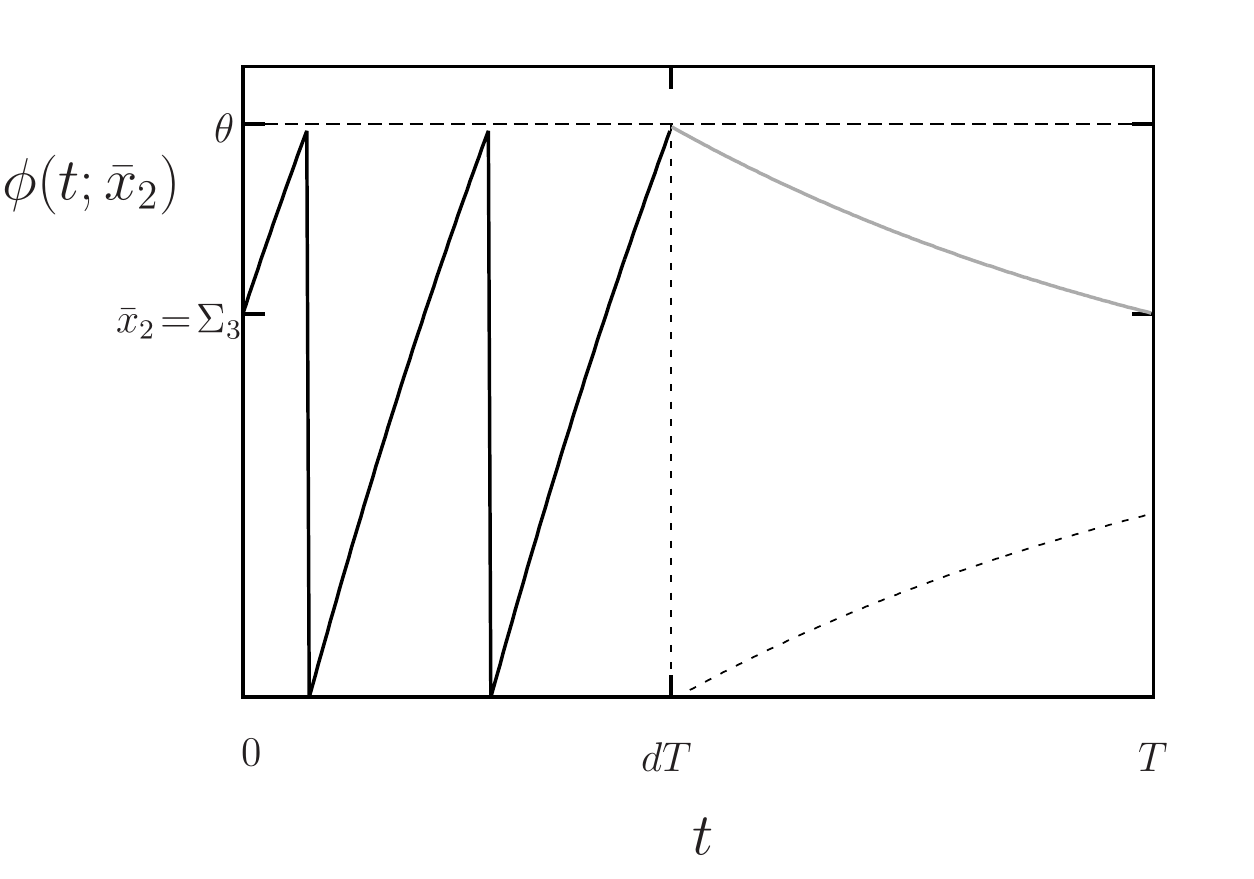}}
}
\end{picture}
\end{center}
\caption{$T$-periodic orbit spiking twice per period (fixed point $\bx_2$ of the
stroboscopic map). It undergoes border collision bifurcations when it collides
with the boundaries $\Sigma_2$ and $\Sigma_3$ (a) and (d), respectively.  The
periodic orbit shown in (d) is its limit when $\bx_2\to\Sigma_3^-$; note that
for $\bx_2=\Sigma_3$ it should be reset to $0$ at $t=dT$, this is why it is
shown in gray. In (b) and (c), the trajectories of these boundaries are shown in
dashed lines; note that they collide with the threshold at $t=dT$.  Parameter
values for panel~(c) are the same as for point $D$ of
Figure~\ref{fig:d-invA_generic_regions}.  The four figures are in one to one
correspondence with the four figures of Figure~\ref{fig:maps_bx2}, where the
stroboscopic map is shown for the same parameter values.}
\label{fig:bif_T-per}
\end{figure}
\begin{figure}
\begin{picture}(1,0.8)
\put(0,0.8){
\subfigure[\label{fig:map_T_R-bif}]{\includegraphics[angle=-90,width=0.5\textwidth]
{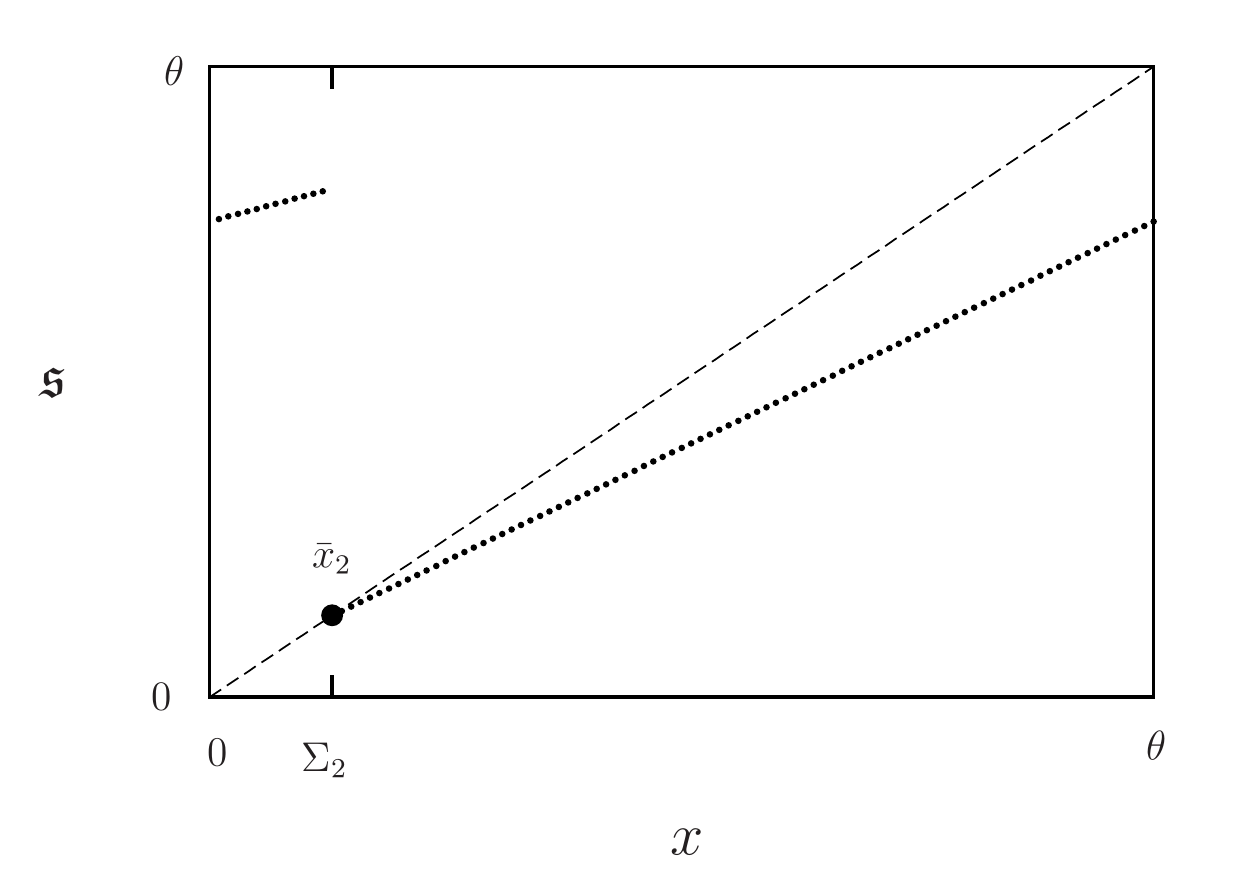}}
}
\put(0.5,0.8){
\subfigure[\label{fig:map2}]{\includegraphics[angle=-90,width=0.5\textwidth]
{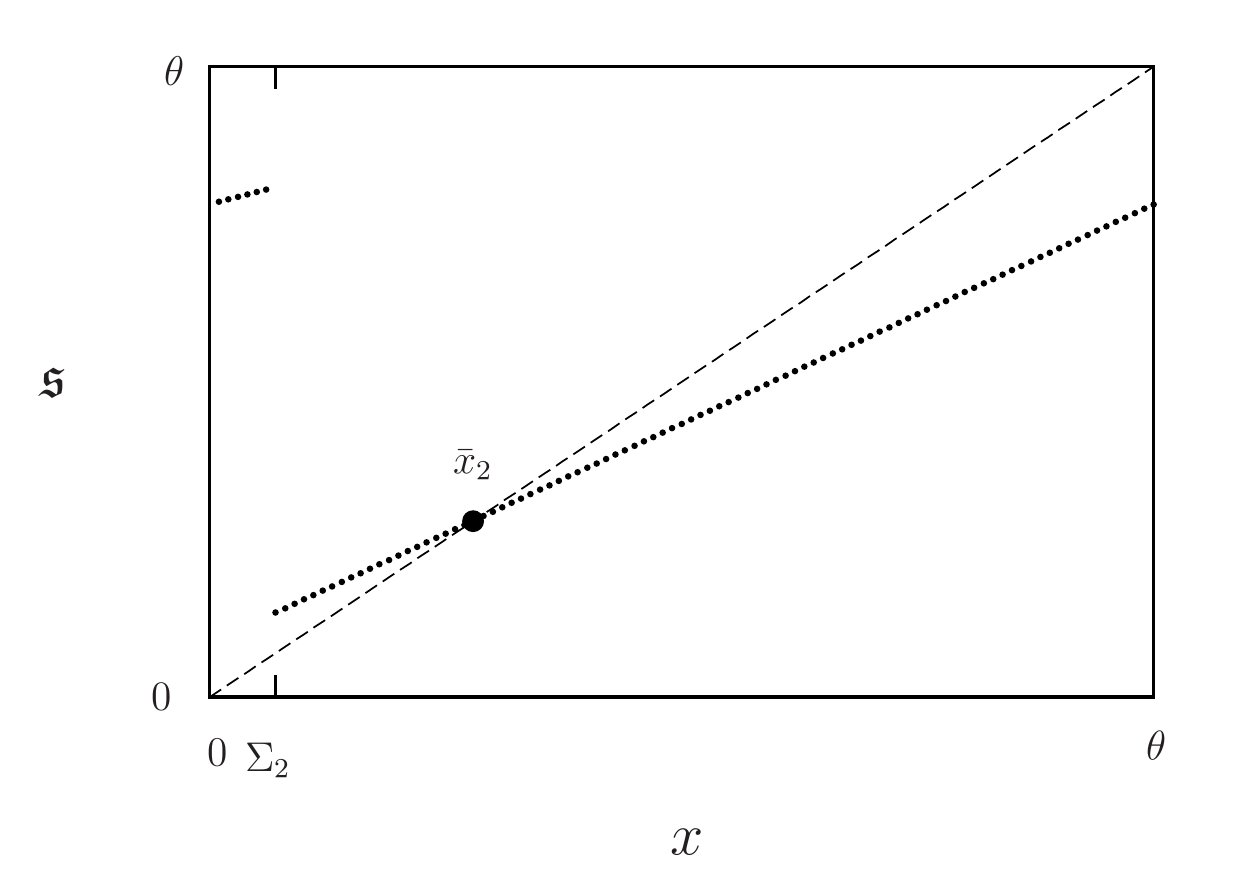}}
}
\put(0,0.4){
\subfigure[\label{fig:map3}]{\includegraphics[angle=-90,width=0.5\textwidth]
{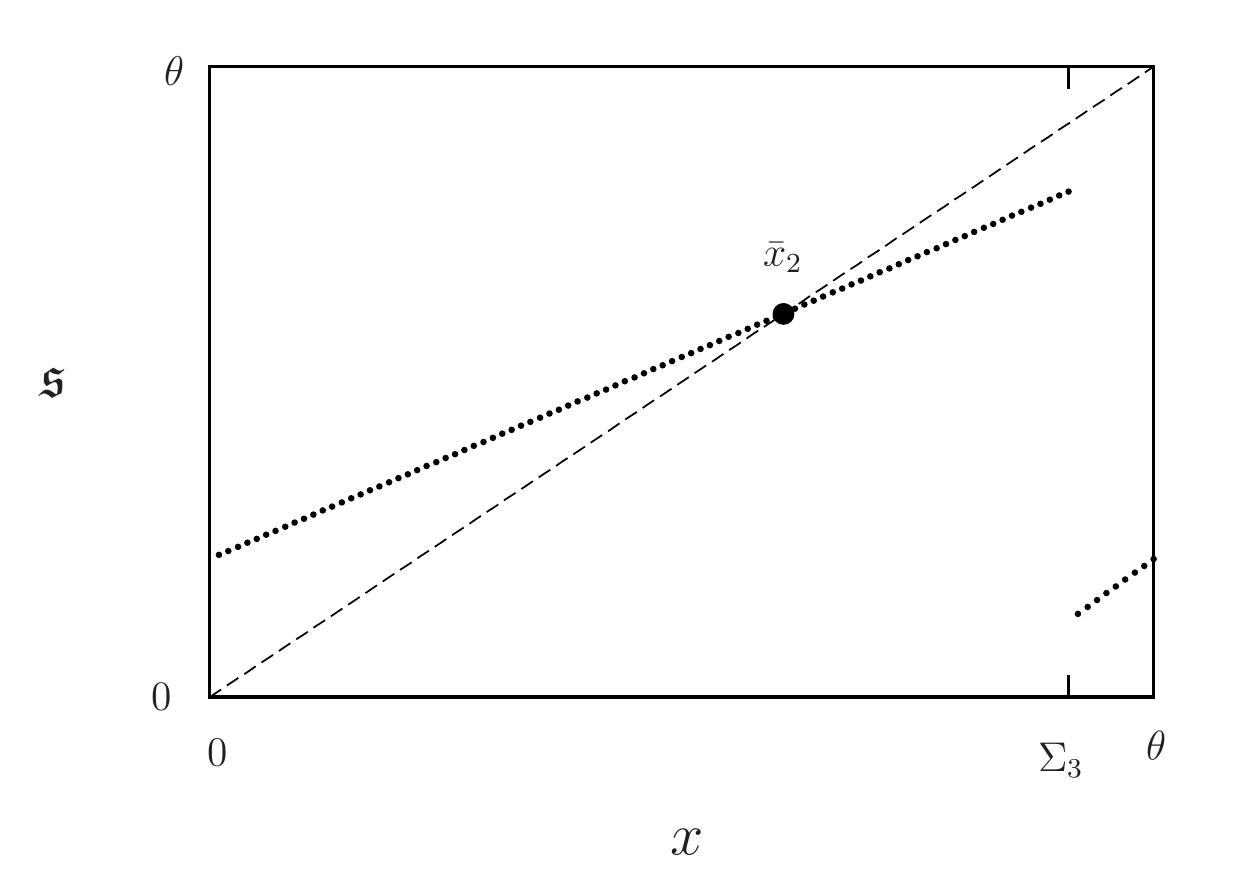}}
}
\put(0.5,0.4){
\subfigure[\label{fig:map_T_L-bif}]{\includegraphics[angle=-90,width=0.5\textwidth]
{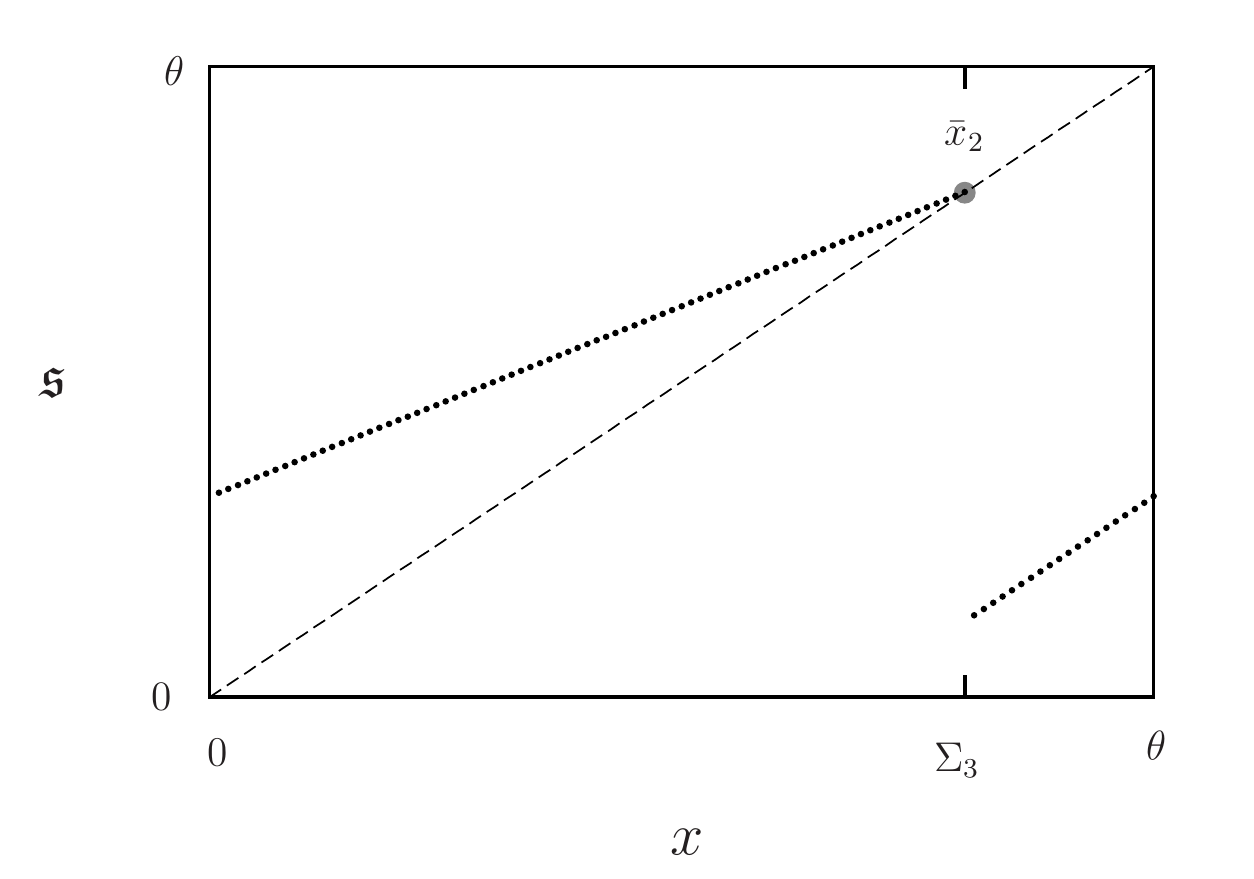}}
}
\end{picture}
\caption{Stroboscopic map for the $T$-periodic orbits shown in
Figure~\ref{fig:bif_T-per}. In (a) and (d) the fixed point $\bx_2$ undergoes
border collision bifurcation when it collides with the boundaries $\Sigma_2$
from the right and $\Sigma_3$ from the left, respectively. Note that, in (d) the
fixed point is shown in gray to emphasize that the map takes indeed the value on
the right for $x=\Sigma_3$.  In (b)-(c) the boundary $\Sigma_2$ disappears and a
new boundary $\Sigma_3$ appears while the fixed point $\bx_2$ remains.}
\label{fig:maps_bx2}
\end{figure}
The first bifurcation defining the uppermost bifurcation curve, given by
$\bx_0=\Sigma_1$ ($A=A_0(d)$), is a bit different than the others, as it
separates the parameter space $d\times1/A$ in two regions. In the lower side of
the bifurcation curve, only spiking asymptotic dynamics are possible whereas on
the upper side only a continuous $T$-periodic orbit exhibiting no spike can
exist.

When entering the white regions, the map does no longer possess any fixed point.
Instead, periodic orbits with arbitrarily high periods exist. These are shown in
Figure~\ref{fig:d-invA_generic_1dscann} along the segment shown in
Figure~\ref{fig:d-invA_generic_regions}. As can be observed, they are organized
by the period adding structure;  that is, between two periodic orbits with
periods $p$ and $q$, there exists another periodic orbit with period $p+q$.

As usual in piecewise-smooth dynamics, one can encode periodic orbits by
introducing symbolic dynamics as follows. We assign the letters $\LL$ and $\R$
depending on whether the corresponding periodic orbit steps on the left or on
the right of the discontinuity. Then, the adding phenomenon is given by the
concatenation of symbolic sequences; that is, between two regions in parameter
space where the periodic orbits with symbolic sequences $\sigma$ and $\omega$
exist, there exists a region locating a periodic orbit with symbolic sequence
$\sigma\omega$, whose period is the addition of the two previous ones.  In
Figure~\ref{fig:farey_tree} we show the symbolic sequences of the periodic
orbits found along the line shown in Figure~\ref{fig:d-invA_generic_regions}
when crossing the white region between points $B$ and $C$, as well as their
associated rotation numbers. These numbers are obtained by dividing the number
of $\R$'s contained in the symbolic sequence by its total length (the period of
the periodic orbit).\\
Note that the rotation numbers are organized by the so-called Farey
tree associated with the period-adding phenomenon. Other
authors~\cite{FreGal11B} suggest that this should be given by a
Stern-Brocot tree. However, in the context of the period adding, it is
the Farey tree that needs to be considered, as it contains more
precise information in the form of rotation numbers (see for
example~\cite{GamGleTre84,GamLanTre84} and~\cite{AlsGamGraKru14} for a
recent survey.). As a consequence, the rotation number follows a
devil's staircase from $0$ to $1$. This is a monotonically increasing
function which is constant almost everywhere, except in a Cantor set
of zero measure.

\begin{figure}
\begin{center}
\begin{tikzpicture}[->,>=stealth',shorten >=1pt,auto,node distance=1cm]
\node[] (a1){};
\node[] (a2) [right of=a1] {};
\node[] (a3) [right of=a2] {};
\node[] (a4) [right of=a3] {$\LL/0$};
\node[] (a5) [right of=a4] {};
\node[] (a6) [right of=a5] {$\R/1$};
\node[] (a7) [right of=a6] {};
\node[] (a8) [right of=a7] {};
\node[] (a9) [right of=a8] {};
\node[] (b1) [above of=a1] {};
\node[] (b2) [right of=b1] {};
\node[] (b3) [right of=b2] {};
\node[] (b4) [right of=b3] {};
\node[] (b5) [right of=b4] {$\LL\R/\frac{1}{2}$};
\node[] (b6) [right of=b5] {};
\node[] (b7) [right of=b6] {};
\node[] (b8) [right of=b7] {};
\node[] (b9) [right of=b8] {};
\node[] (c1) [above of=b1] {};
\node[] (c2) [right of=c1] {};
\node[] (c3) [right of=c2] {$\LL^2\R/\frac{1}{3}$};
\node[] (c4) [right of=c3] {};
\node[] (c5) [right of=c4] {};
\node[] (c6) [right of=c5] {};
\node[] (c7) [right of=c6] {$\LL\R^2/\frac{2}{3}$};
\node[] (c8) [right of=c7] {};
\node[] (c9) [right of=c8] {};
\node[] (d1) [above of=c1] {};
\node[] (d2) [right of=d1] {$\LL^3\R/\frac{1}{4}$};
\node[] (d3) [right of=d2] {};
\node[] (d4) [right of=d3] {};
\node[] (d5) [right of=d4] {};
\node[] (d6) [right of=d5] {};
\node[] (d7) [right of=d6] {};
\node[] (d8) [right of=d7] {$\LL\R^3/\frac{3}{4}$};
\node[] (d9) [right of=d8] {};
\node[] (e1) [above of=d1] {$\LL^4\R/\frac{1}{5}$};
\node[] (e2) [right of=e1] {};
\node[] (e3) [right of=e2] {};
\node[] (e4) [right of=e3] {$\LL^2\R\LL\R/\frac{2}{5}$};
\node[] (e5) [right of=e4] {};
\node[] (e6) [right of=e5] {$\LL\R\LL\R^2/\frac{3}{5}$};
\node[] (e7) [right of=e6] {};
\node[] (e8) [right of=e7] {};
\node[] (e9) [right of=e8] {$\LL\R^4/\frac{4}{5}$};
\path[]
(a4) edge node [left] {} (b5)
(a6) edge node [left] {} (b5)
(b5) edge node [left] {} (c3)
(a4) edge node [left] {} (c3)
(b5) edge node [left] {} (c7)
(a6) edge node [] {} (c7)
(c7) edge node [] {} (d8)
(a6) edge node [] {} (d8)
(a4) edge node [left] {} (d2)
(c3) edge node [left] {} (d2)
(a4) edge node [left] {} (e1)
(d2) edge node [left] {} (e1)
(a6) edge node [] {} (e9)
(d8) edge node [] {} (e9)
(b5) edge node [] {} (e4)
(c3) edge node [] {} (e4)
(b5) edge node [] {} (e6)
(c7) edge node [] {} (e6)
;
\end{tikzpicture}
\end{center}
\caption{Symbolic sequences and rotation numbers forming the so-called Farey
tree structure for the period adding. The symbol $\LL$ corresponds to a step on
the left of the discontinuity, $\R$ to a step on its right.}
\label{fig:farey_tree}
\end{figure}
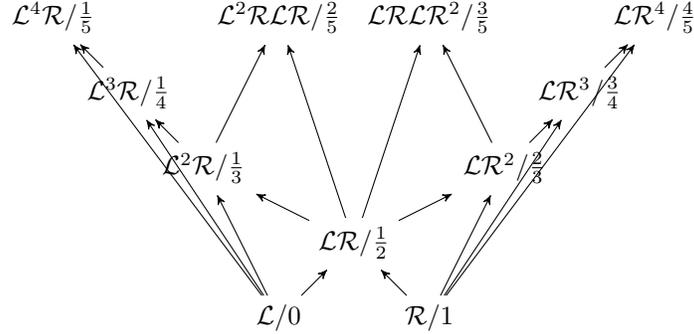
Immediately after crossing a white region and entering a gray one where another
$T$-periodic orbit exists (fixed point of the stroboscopic map), the rotation
number equals $1$ for a while until it suddenly jumps to $0$ again. This is due
to the following reason.\\
When varying parameters along the line shown in~\ref{fig:d-invA_generic_regions}
inside the gray regions, a new discontinuity $\Sigma_{n+1}$ enters $[0,\theta]$
($\theta=\Sigma_{n+1}$) and $\Sigma_n$ no longer exists (because of the
uniqueness of the discontinuity mentioned above), while the periodic orbit
spiking $n$ times still exists and hence is not subject to any bifurcation (see
Figures~\ref{fig:map2}-\ref{fig:map3} for $n=2$).  At this moment, however, the
rotation number associated with this periodic orbit jumps from $1$ to $0$, and
the state space is now split in two pieces, $[0,\Sigma_{n+1})$ and
$[\Sigma_{n+1},\theta)$ where the system spikes $n$ and $n+1$ times,
respectively. This comes from the fact that, when a new discontinuity
appears, what was on the right of the previous discontinuity $\Sigma_n$
becomes on the left of the new one $\Sigma_{n+1}$; hence, the stroboscopic map
can be reduced to a new circle map which rotates on the opposite direction.

\begin{remark}\label{rem:symbolic_seq-spikes}
The symbols $\LL$ and $\R$ in the symbolic sequences of the periodic orbits
located in the white regions correspond to $n$ and $n+1$ spikes in a $T$-time
interval, respectively.
\end{remark}
\begin{remark}\label{rem:rotation_numbers-eta_B-C}
Following~\cite{GamLanTre84}, one can relate the rotation numbers to the
symbolic dynamics associated with the periodic orbits that appear along the line
shown in Figure~\ref{fig:d-invA_generic_regions}, by dividing the number of
$\R$'s that appear in the symbolic sequence by the period of the periodic orbit.
Hence, taking into account Remark~\ref{rem:symbolic_seq-spikes}, between $B$ and
$C$ in Figure~\ref{fig:d-invA_generic_regions} the rotation number equals the
firing-number $\eta$.
\end{remark}

\begin{remark}\label{rem:rotation_numbers-eta}
Beyond point $C$ in the line shown in fig.~\ref{fig:d-invA_generic_regions},
$\eta$ varies along the line as the rotation number but without the jumping from
$1$ to $0$. Hence, this quantity follows a devil's staircase from $0$ to
$\infty$ when parameters are varied along such a line.
\end{remark}

\begin{remark}\label{rem:contractiveness}
In the conditions mentioned at the beginning of this section, in
addition to~\conds{} it was also required that $T$ be large or small
enough. This is needed in order to ensure that the stroboscopic map
$\s$ is contractive in all its domain, which is a necessary condition
for the occurrence of the period adding. It is possible, for certain
values of $T$, that, when $A$ is not sufficiently large, the
stroboscopic map $\s$ be expanding in the domain $[\Sigma_n,\theta]$.
When this occurs, the rotation number may no longer follow a devil's
staircase but a continuous increasing function, the existence of a
periodic orbit may not be unique and it can unstable.
See~\Firstpaper{} for more details.
\end{remark}

\subsection{Bifurcation scenario upon frequency variation}\label{sec:freq_properties}
We now focus on how the bifurcation scenario described
in~\S\ref{sec:bif_scenario} and schematically shown in
Figure~\ref{fig:d-invA_generic} varies with $T$.\\
As proven in~\Firstpaper{} this bifurcation scenario does not
qualitatively depend on $T$ and, hence, no other bifurcations are
introduced nor subtracted under variation of $T$ as long as
contractivness of the stroboscopic map $\s$ is kept (see
Remark~\ref{rem:contractiveness}). However, the shape of the
bifurcation curves varies, as the next two propositions show (see
Figures~\ref{fig:diff_Ts} and~\ref{fig:diff_largeTs} for graphical
support through an example).  Proposition~\ref{prop:Tlarge} tells us
that the bifurcation curves accumulate to the horizontal line
$1/A=1/Q_c$ when $T\to\infty$ (labeled in all paths of
Figures~\ref{fig:diff_Ts} and~\ref{fig:diff_largeTs}).
Proposition~\ref{prop:Tsmall} tells us that all bifurcation curves
accumulate to horizontal axis, except for the one given by $A_0(d)$,
which accumulates to the straight line $1/A=d/Q_c$ when $T\to0$ (see
Figure~\ref{fig:T0d1}).

\begin{proposition}\label{prop:Tlarge}
Let $d\in(0,1)$ and consider the values given in definition~\ref{def:AnR_AnL},
$A_n^{\R,\LL}=A_n^{\R,\LL}(d,T)$ and $A_0=A_0(d,T)$, for which the fixed points
$\bx_n$ undergo border collision bifurcations. Then,
\begin{equation*}
\lim_{T\to\infty}A_0(d,T)=\lim_{T\to\infty}A_n^\R(d,T)=\lim_{T\to\infty}A_n^\LL(d,T)=Q_c,
\end{equation*}
where $Q_c$ is the critical dose defined in~\eqref{eq:critical_dose_eq}.
\end{proposition}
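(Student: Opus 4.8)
The plan is to show that squeezing only finitely many spikes into an ``on''-phase whose length $dT\to\infty$ forces the amplitude down to the critical dose. I would introduce the inter-spike time
\[
\delta_A=\int_0^\theta\frac{dx}{f(x)+A},
\]
the time for $\dot x=f(x)+A$ to travel from the reset value $0$ to the threshold $\theta$. By \conds{}, spiking is possible only when the ``on''-phase field points upward at threshold, i.e.\ $f(\theta)+A>0$; by~\eqref{eq:critical_dose_eq} this is exactly $A>Q_c$, and then the ``on''-phase equilibrium $\bx_A$ (with $f(\bx_A)+A=0$) satisfies $\bx_A>\theta$, so every trajectory starting in $[0,\theta]$ rises monotonically to $\theta$ in finite time, with consecutive spikes separated by exactly $\delta_A$ and the first spike reached in time at most $\delta_A$. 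Since the three bifurcation curves all correspond to (borderline) spiking fixed points, we automatically get $A\ge Q_c$ along each of them, and the whole content of the statement reduces to the matching upper bound $A-Q_c\to0$.

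The key estimate I would use comes from monotonicity of $f$. Since $f$ is decreasing on $[0,\theta]$ (H.2), we have $f(x)+A\ge f(\theta)+A=A-Q_c$ for $x\in[0,\theta]$, whence
\[
\delta_A\le\frac{\theta}{A-Q_c},\qquad\text{equivalently}\qquad A-Q_c\le\frac{\theta}{\delta_A}.
\]
Thus any lower bound on $\delta_A$ immediately yields an upper bound on $A-Q_c$, and it remains to show that $\delta_A$ grows linearly in $T$ along each of the curves $A_0$, $A_n^\R$, $A_n^\LL$.

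To obtain these lower bounds I would invoke the characterisation of the discontinuity set $\Sigma_m$ as the initial condition whose trajectory reaches $\theta$ \emph{exactly} at $t=dT$ after $m-1$ prior spikes. At the right bifurcation $A=A_n^\R$ we have $\bx_n=\Sigma_n$, so the orbit performs its $n$-th spike precisely at $t=dT$; writing this as (first-spike time)$+(n-1)\delta_{A_n^\R}$ and bounding the first-spike time by $\delta_{A_n^\R}$ gives $dT\le n\,\delta_{A_n^\R}$, i.e.\ $\delta_{A_n^\R}\ge dT/n$. Identically, at $A=A_n^\LL$ we have $\bx_n=\Sigma_{n+1}$, the $(n+1)$-th spike lands at $t=dT$, so $\delta_{A_n^\LL}\ge dT/(n+1)$; and at $A=A_0$ the orbit $\bx_0=\Sigma_1$ reaches threshold for the first time at $t=dT$, giving $\delta_{A_0}\ge dT$. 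Substituting into the displayed estimate yields
\[
0\le A_n^\R-Q_c\le\frac{n\theta}{dT},\qquad 0\le A_n^\LL-Q_c\le\frac{(n+1)\theta}{dT},\qquad 0\le A_0-Q_c\le\frac{\theta}{dT},
\]
and letting $T\to\infty$ with $d$ and $n$ fixed gives the three limits simultaneously.

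The main obstacle is not the estimates, which are elementary once $\delta_A$ is in hand, but the bookkeeping that converts the geometric picture of the border collisions in~\S\ref{sec:bif_scenario} into the exact timing identities above: I must confirm that along these fixed-point orbits all spikes occur inside the ``on''-phase (none during the ``off''-phase, where $f$ alone has the subthreshold attractor $\bx<\theta$ and no crossing is possible), that the inter-spike interval is constant and equal to $\delta_A$, and that the configuration stays inside the contractive regime of Remark~\ref{rem:contractiveness} where $\Sigma_n$ and the fixed points $\bx_n$ are well defined. Granting these structural facts from~\Firstpaper{}, the sandwich $Q_c\le A\le Q_c+O(1/T)$ closes the argument.
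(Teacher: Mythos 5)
Your proof is correct and follows essentially the same route as the paper: both arguments rest on the timing identities at the border collisions (the paper's equations~\eqref{eq:bif1}--\eqref{eq:bif2}), which force the inter-spike time to grow like $dT/n$ and hence diverge, so that $A$ must tend to the value $Q_c$ at which $\theta$ becomes an equilibrium of the on-phase field. The only difference is that where the paper concludes qualitatively from $\delta\to\infty$, you insert the explicit bound $A-Q_c\le\theta/\delta_A$ and obtain an $O\!\left(n\theta/(dT)\right)$ convergence rate --- a quantitative sharpening rather than a different idea.
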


\begin{proof}
Let $\varphi(t;x;A)$ be the flow associated with system $\dot{x}=f(x)+A$, and
let $\bx_n=\bx_n(d,T,A)$ be the initial condition ($t_0=0$) for a $T$-periodic
orbit spiking $n$ times (fixed point of the stroboscopic map $\s(x)$). As shown
in Figure~\ref{fig:bif1}, (see~\Firstpaper{} for more details), the border
collision bifurcations that the $T$-periodic orbit spiking $n$ times undergo at
$A=A_n^\R(d,T)$ (fig.~\ref{fig:bif1_L}) and $A=A_n^\LL$ (fig.~\ref{fig:bif1_R})
are characterized by the equations
\begin{align}
\varphi(t_n;\bx_n;A_n^\R)&=\theta\nonumber\\
\varphi(\delta;0;A_n^\R)&=\theta\label{eq:bif1}\\
\varphi(T-dT;0;0)&=\bx_n\nonumber\\
t_n+(n-1)\delta&=dT\nonumber
\end{align}
and
\begin{align}
\varphi(t_n';\bx_n;A_n^\LL)&=\theta\nonumber\\
\varphi(\delta';0;A_n^\LL)&=\theta\label{eq:bif2}\\
\varphi(T-dT;\theta;0)&=\bx_n\nonumber\\
t_n'+(n-1)\delta'&=dT\nonumber,
\end{align}
respectively. As $n$ is fixed, when $T\to\infty$ also $\delta\to\infty$. Hence,
from equations~\eqref{eq:bif1} and~\eqref{eq:bif2} we get that the values
$A=A_n^\R$ and $A=A_n^\LL(d,T)$ are such that system $\dot{x}=f(x)+A$ possesses an
attracting critical point at $x=\theta$. From
equation~\eqref{eq:critical_dose_eq} we get that the limiting values are
$A_n^\R=A_n^\LL=Q_c$.\\
Arguing similarly and using that the bifurcation condition for $A_0(d,T)$ is
equivalent to~\eqref{eq:bif2}, one gets that $A_0\to Q_c$.
\end{proof}
\begin{figure}
\begin{center}
\subfigure[\label{fig:bif1_L}]{
\includegraphics[width=0.5\textwidth]{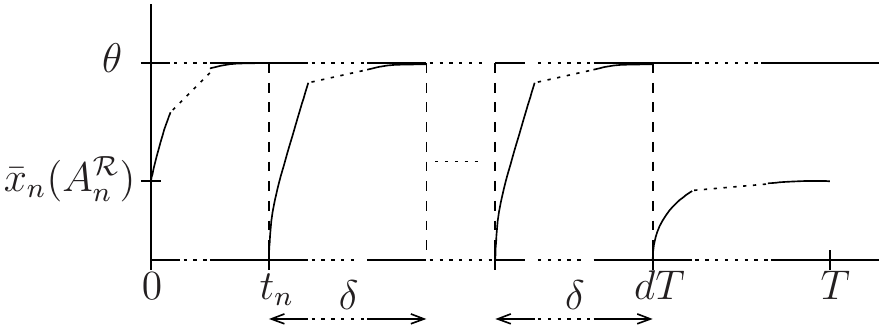}
}
\subfigure[\label{fig:bif1_R}]{
\includegraphics[width=0.5\textwidth]{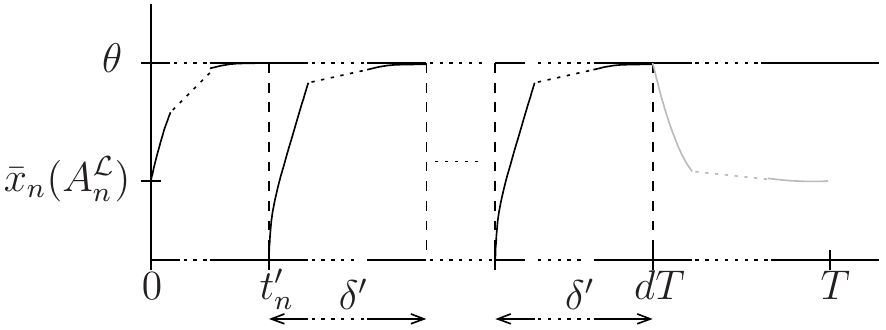}
}
\end{center}
\caption{Bifurcations of the $T$-periodic spiking $n$ times for large values of
$T$. (a) and (b) smallest and largest values of $A$ ($A_n^\R$ and $A_n^\LL$,
respectively) for which the $T$-periodic orbit exists. This corresponds to the
border collision bifurcations given by $\bx_n(A_n^\R)=\Sigma_{n}$ and $\bx_n(
(A_n^\LL)^-)=\Sigma_{n+1}$, respectively. The gray color in (b) reflects the
fact that such orbit does no longer exist for $A=A_n^\LL$, as it must be reset
to $0$ at $t=dT$. The orbit shown is the limiting periodic orbit for $A\to
A_n^\LL$.}
\label{fig:bif1}
\end{figure}

\begin{proposition}\label{prop:Tsmall}
Let $d\in(0,1)$ and consider the bifurcation values of the fixed points
$\bx_n\in S_n$, $A_0(d,T)$ and $A_n^{\R,\LL}(d,T)$ ($n>1$) as in
definition~\ref{def:AnR_AnL}. Then they fulfill
\begin{enumerate}[i)]
\item
\begin{equation}
\lim_{T\to0}A_0(d,T)=\frac{Q_c}{d},
\label{eq:limiting_A}
\end{equation}
where $Q_c$ is the critical dose defined in~\eqref{eq:critical_dose_eq}.
\item 
\begin{equation*}
\lim_{T\to0}A_n^{\R,\LL}(d,T)=\infty,\,n>1.
\end{equation*}
\end{enumerate}
\end{proposition}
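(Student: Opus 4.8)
The plan is to read off both limits from the border-collision characterisations already used in the proof of Proposition~\ref{prop:Tlarge}, namely~\eqref{eq:bif1} and~\eqref{eq:bif2}, by tracking their timing constraints as $T\to 0$. Throughout I write $\varphi(t;x;A)$ for the flow of $\dot x=f(x)+A$, let $\delta=\delta(A)$ be the crossing time $\varphi(\delta;0;A)=\theta$, and set $M=\max_{x\in[0,\theta]}|f(x)|$, which is finite since $f$ is continuous on the compact interval $[0,\theta]$. The underlying mechanism differs in the two parts: part~ii) is a \emph{fast-spiking} statement (several threshold crossings must be squeezed into a vanishing on-phase, forcing $A\to\infty$), whereas part~i) is an \emph{averaging} statement (the non-spiking orbit only feels the mean input $Ad=Q$).

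For part~ii) I would argue straight from the timing equations. At $A=A_n^\R$ the relation $t_n+(n-1)\delta=dT$ holds with $t_n>0$, since $\bx_n<\theta$ and, as $A_n^\R>Q_c$, one has $f(x)+A_n^\R>0$ on $[0,\theta]$ so the flow rises to the threshold in positive time; hence $(n-1)\delta<dT$ and $\delta<dT/(n-1)$. As $n>1$ and $d$ are fixed, this forces $\delta\to 0$ as $T\to 0$. Integrating $\dot x=f(x)+A$ from $0$ to $\theta$ gives $\theta=\int_0^{\delta}(f(\varphi)+A)\,ds$, whence $(A-M)\delta\le\theta\le(A+M)\delta$; the lower bound rearranges to $A_n^\R\ge\theta/\delta-M\to\infty$. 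The same reasoning applied to the primed quantities of~\eqref{eq:bif2} gives $A_n^\LL\to\infty$, which is part~ii).

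For part~i) I would use that, at $A=A_0$, the non-spiking fixed point $\bx_0$ grazes the threshold exactly at the end of the on-phase, so the bifurcation is pinned by the two conditions $\varphi(dT;\bx_0;A_0)=\theta$ and $\varphi((1-d)T;\theta;0)=\bx_0$ (the specialisation of~\eqref{eq:bif2} recorded in the proof of Proposition~\ref{prop:Tlarge}). Expanding the off-phase condition for small $T$ gives $\theta-\bx_0=-f(\theta)(1-d)T+o(T)$, so in particular $\bx_0\to\theta$. Substituting $\bx_0=\theta+o(1)$ into the on-phase condition and using continuity of $f$ together with the smallness of the integration window gives $\theta-\bx_0=(f(\theta)+A_0)\,dT+o(T)$. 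Equating the two expressions for $\theta-\bx_0$, dividing by $T$ and sending $T\to 0$ yields the linear balance $-f(\theta)(1-d)=(f(\theta)+A_0)\,d$, that is $A_0 d\to -f(\theta)=Q_c$, which is~\eqref{eq:limiting_A}.

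The step I expect to be the main obstacle is making the limit in part~i) rigorous, because the on-phase expansion silently assumes that $A_0$ stays bounded as $T\to 0$; if it did not, the flow could leave a neighbourhood of $\theta$ during the on-phase and the error terms would no longer be controlled. I would close this gap with an a priori bound obtained from the very same two conditions: from the on-phase, $\theta-\bx_0=\int_0^{dT}(f(\varphi)+A_0)\,ds\ge(A_0-M)\,dT$, and from the off-phase, $\theta-\bx_0=-\int_0^{(1-d)T}f(\varphi)\,ds\le M(1-d)T$, which combine to $A_0\le M/d$, uniformly for small $T$. Thus along any sequence $T_k\to 0$ the values $A_0(d,T_k)$ lie in the compact set $[0,M/d]$, every accumulation point satisfies the linear balance equation, and its unique root $Q_c/d$ forces convergence of the whole family; once $A_0$ is bounded, the orbit stays within $O(T)$ of $\theta$ throughout the on-phase and the remainders are genuinely $o(T)$, completing the argument.
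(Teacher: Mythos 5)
Your proof is correct and follows essentially the same route as the paper: part i) is obtained by expanding the two grazing conditions $\varphi(dT;\bx_0;A_0)=\theta$ and $\varphi((1-d)T;\theta;0)=\bx_0$ to first order in $T$ and balancing the linear terms, and part ii) follows from the fact that the required threshold crossings must fit inside the shrinking on-phase $[0,dT]$. The only difference is that you make explicit two points the paper leaves implicit --- the a priori bound $A_0\le M/d$ that justifies the small-$T$ expansion, and the quantitative estimate $A\ge\theta/\delta-M$ forcing $A_n^{\R,\LL}\to\infty$ --- which is a welcome tightening rather than a genuinely different argument.
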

\begin{proof}
Let $\varphi(t;x;A)$ be the flow associated with the system
$\dot{x}=f(x)+A$.  The fact that $f(x)$ is a monotonously decreasing
function with a simple zero (the equilibrium point $\bx$ in
\emph{H.}1) ensures that the bifurcation suffered by the non-spiking
$T$-periodic orbit will be given when
\begin{equation}
\left\{
\begin{aligned}
&\varphi(dT;\bx_0;A_0)=\theta\\
&\varphi(T-dT;\theta;0)=\bx_0,
\end{aligned}
\right.
\label{eq:bif_equation}
\end{equation}
where $\bx_0$ is the initial condition for such periodic orbit for $t=0$. We
want to solve these equations for $A_0$ and $\bx_0$ for a fixed $d\in(0,1)$ and
$T>0$, and see how this solution behaves when $T\to0$.\\
For $T\to0$ we can approximate the flow by a linear one and obtain
\begin{align*}
&\varphi(dT;x;A_0)=x+\left( f(x)+A_0 \right)dT+O(T^2)\\
&\varphi(T-dT;\theta;0)=\theta+f(\theta)(1-d)T+O(T^2).
\end{align*}
Hence, for $T\to0$, the bifurcation condition~\eqref{eq:bif_equation} becomes
equivalent to the system
\begin{equation*}
\left\{
\begin{aligned}
\bx_0+\left( f(\bx_0)+A_0 \right)dT=\theta\\
\theta+f(\theta)\left( 1-d \right)T=\bx_0,
\end{aligned}
\right.
\end{equation*}
which we can use to obtain the explicit expression
\begin{equation*}
A_0(d,T)=-\frac{f(\theta)}{d}+f(\theta)-f\big(\theta+f\left(\theta\right)(1-d)T\big)+O(T^2).
\end{equation*}
Recalling the definition of the critical dose given in
Eq.~\eqref{eq:critical_dose_eq}, $Q_c=-f(\theta)$, we obtain
\begin{equation*}
\lim_{T\to0}A_0(d,T)=\frac{Q_c}{d},
\end{equation*}
which proves {\it i)}.\\
To see that all other bifurcation curves accumulate at the horizontal
line $1/A=0$, as stated in {\it ii)}, we just use the fact that the
periodic orbits involved in these bifurcations perform at least one
spike for $t\in[0,dT]$.  Hence, when $T\to0$, we necessary have that
$A\to\infty$ in order to keep these spikes.
\end{proof}

\begin{remark}
Note that the fact that the border collision bifurcation curves defined by
$A_n^{\R,\LL}$ collapse to the horizontal axis for $T\to0$ implies that all
other border collision bifurcation curves separating regions of existence of
periodic orbits with higher periods also collapse to the horizontal axis, as
they are located in between.
\end{remark}

The next result tells us that all bifurcation curves vary monotonically with
$T$.
\begin{lemma}\label{lem:monotonicity}
For a fixed $d\in(0,1)$ let $A^\R(d,T)$ and $A^\LL(d,T)$ be the values for which
a periodic orbit undergoes a border collision bifurcation. Then they are
monotonic functions of $T$.
\end{lemma}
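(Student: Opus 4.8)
The plan is to reduce each border‑collision condition to a single scalar equation $F(A,T)=0$ and then read off the sign of $dA/dT$ from the implicit function theorem. Introduce the time‑of‑flight of the forced flow,
\begin{equation*}
\tau(x_1,x_2;A)=\int_{x_1}^{x_2}\frac{dx}{f(x)+A},
\end{equation*}
which is finite and positive exactly when $f+A>0$ on $[x_1,x_2]$, i.e. when the orbit spikes. In \eqref{eq:bif1} the inter‑spike time and the first spike time are $\delta=\tau(0,\theta;A)$ and $t_n=\tau(\bx_n,\theta;A)$, while $\bx_n=\varphi((1-d)T;0;0)$ depends on $T$ but not on $A$; substituting into the closure relation $t_n+(n-1)\delta=dT$ collapses \eqref{eq:bif1} into
\begin{equation*}
F_\R(A,T):=\tau\bigl(\varphi((1-d)T;0;0),\theta;A\bigr)+(n-1)\,\tau(0,\theta;A)-dT=0 .
\end{equation*}
The left curve is identical with $\varphi((1-d)T;0;0)$ replaced by $\varphi((1-d)T;\theta;0)$, giving $F_\LL(A,T)=0$; and since the condition for $A_0(d,T)$ is equivalent to \eqref{eq:bif2}, it is the $n=1$ instance of $F_\LL$.

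By the implicit function theorem $dA/dT=-\partial_T F/\partial_A F$ along each curve. The denominator is unambiguous: because $\partial_A\tau(x_1,x_2;A)=-\int_{x_1}^{x_2}(f+A)^{-2}\,dx<0$ and $\bx_n$ does not depend on $A$, we get $\partial_A F<0$ for both curves, expressing that raising the amplitude shortens every spike. For the numerator, differentiating $\bx_n=\varphi((1-d)T;\,\cdot\,;0)$ gives $d\bx_n/dT=f(\bx_n)(1-d)$, so
\begin{equation*}
\partial_T F=-\frac{f(\bx_n)(1-d)}{f(\bx_n)+A}-d=-\frac{f(\bx_n)+dA}{f(\bx_n)+A}.
\end{equation*}
Here $f(\bx_n)+A$ is the active velocity at $\bx_n$ and is strictly positive since the orbit spikes; hence $\operatorname{sign}(dA/dT)=-\operatorname{sign}\bigl(f(\bx_n)+dA\bigr)$, and the whole lemma reduces to showing that $f(\bx_n)+dA=f(\bx_n)+Q$ (with $Q=dA$ the dose) keeps a constant sign along each curve.

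For the right curves this is immediate. There the quiescent flow starts at $0$, so $\bx_n=\varphi((1-d)T;0;0)<\bx$ by~\conds{}, whence $f(\bx_n)>0>-dA$; the numerator never vanishes and $A_n^\R(d,\cdot)$ is strictly decreasing, consistent with the limits $Q_c$ and $+\infty$ of Propositions~\ref{prop:Tlarge} and~\ref{prop:Tsmall}.

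The hard case is the left curves (and $A_0$), where the quiescent flow starts at $\theta$, so $\bx_n=\varphi((1-d)T;\theta;0)\in(\bx,\theta)$ and $f(\bx_n)<0$: now the two terms of $f(\bx_n)+Q$ compete. Writing $\hat x$ for the equilibrium of the averaged system $\dot x=f(x)+Q$ (so $f(\hat x)+Q=0$, the quantity whose threshold time is $\hdelta$), monotonicity of $f$ turns the sign condition into $\bx_n\lessgtr\hat x$: the curve is decreasing precisely where $\bx_n<\hat x$. I would establish monotonicity by excluding equality rather than computing the sign head‑on: if $dA/dT$ never vanishes it keeps a constant sign by continuity in $T$, and that sign is then fixed to be negative by the endpoint values $A\to Q_c$ ($T\to\infty$) and $A\to\infty$ ($T\to0$) of Propositions~\ref{prop:Tlarge}--\ref{prop:Tsmall}. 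Thus the crux is to rule out $\bx_n=\hat x$, i.e. $f(\bx_n)+Q=0$, along the curve. I expect this to be the main obstacle: the naive balances (the period‑average identity $\int_0^T f(\phi)\,dt=n\theta-QT$, or bounding $|f(\bx_n)|$ by the mean quiescent descent speed) are too crude because the orbit resets to $0$ between spikes, so I would instead compare the true $T$-periodic orbit with the trajectory of the averaged flow issued from $\bx_n$, using H.2 to control the sign of their difference over one period and thereby forbid $\bx_n=\hat x$.
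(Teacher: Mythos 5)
Your reduction is exactly the paper's: differentiating the border-collision equations in $T$, using that $\partial_A$ of each time-of-flight integral is negative, and showing that the sign of $dA/dT$ is governed by the quantity $f(\bx_n)+aA$ (writing $a$ for the duty cycle; your $-\bigl(f(\bx_n)+aA\bigr)/\bigl(f(\bx_n)+A\bigr)$ is the paper's $a+(1-a)f(\bx_n)/(f(\bx_n)+A)$ up to sign). Your treatment of the right curves is complete and in fact cleaner than the paper's, which only argues qualitatively there; your observation that $\bx_n=\varphi((1-d)T;0;0)\in(0,\bx)$ forces $f(\bx_n)>0$ settles that case outright.

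For the left curves (and $A_0$), however, you have correctly located the crux --- ruling out $f(\bx_n)+aA=0$, i.e.\ $\bx_n=f^{-1}(-aA)$ --- but you do not prove it; you only propose comparing the true periodic orbit with the averaged flow, and you yourself note that the obvious balance identities are too crude. That is a genuine gap: the comparison you sketch is not carried out, and it is not clear it can be (the averaged and true flows are only close for small $T$, while the obstruction must be excluded for all $T$). The paper closes this differently and concretely: it adjoins $\bx_n=f^{-1}(-aA)$ as a third equation to the two bifurcation equations, eliminates $\bx_n$ and $T$, and reduces the question to whether
\begin{equation*}
\psi(A)=\int_{f^{-1}(-aA)}^{\theta}\left(\frac{a}{f(x)}+\frac{1-a}{f(x)+A}\right)dx
+n\int_{0}^{\theta}\frac{1-a}{f(x)+A}\,dx
\end{equation*}
can vanish on the admissible range $Q_c\le A<Q_c/a$ (the upper bound coming from $\bx_n<\theta$). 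A direct computation gives $\psi'(A)<0$ (the boundary term from the moving lower limit cancels), and $\psi(Q_c/a)>0$ since the first integral degenerates; hence $\psi>0$ throughout and the forbidden configuration never occurs. Combined with the large-$T$ limit from Proposition~\ref{prop:Tlarge}, this fixes the sign of $dA_n^\LL/dT$, which is the step your proposal leaves open.
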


\begin{proof}
We prove the result for the bifurcations undergone by fixed points of the
stroboscopic map ($A^\LL_n(d,T)$ and $A_n^\R(d,T)$). Proceeding similarly one
obtains the analogous result for periodic orbits.

Assume that $\bx_n$ is a fixed point of the stroboscopic map $\s(x)$ leading to
a $T$-periodic orbit exhibiting $n$ spikes per period. Such a fixed point
undergoes a left bifurcation for $A=A_n^\LL$ when the following equations are
satisfied (see fig.~\ref{fig:T-per_L-bif})
\begin{align*}
&\int_{\bx_n}^\theta\frac{dx}{f(x)+A_n^\LL}+n\int_0^\theta\frac{dx}{f(x)+A_n^\LL}=aT\\
&\int_\theta^{\bx_n}\frac{dx}{f(x)}=(1-a)T,
\end{align*}
where we have renamed the duty cycle $d$ by $a$ to avoid the confusion with the
notation used for derivatives and differentials. Differentiating the previous
equations  with respect to $T$ we get
\begin{align*}
a&=-\frac{dA_n^\LL}{dT}\left(
\int_{\bx_n}^\theta\frac{dx}{\left(f(x)+A_n^\LL\right)^2}+n\int_0^\theta\frac{dx}{\left(
f(x)+A_n^\LL
\right)^2} \right)-\frac{1}{f(\bx_n)+A_n^\LL}\frac{d\bx_n}{dT}\\
\frac{d\bx_n}{dT}&=(1-a)f(\bx_n).
\end{align*}
We want to see that $dA_n^\LL/dT<0$. Combining these last equations we get
\begin{equation*}
-\left(\int_{\bx_n}^\theta\frac{dx}{\left(f(x)+A_n^\LL\right)^2}+n\int_0^\theta\frac{dx}{\left(
f(x)+A_n^\LL
\right)^2} \right)\frac{dA_n^\LL}{dT}=a+(1-a)\frac{f(\bx_n)}{f(\bx_n)+A_n^\LL}.
\end{equation*}
We note that
\begin{itemize}
\item the coefficient of $dA_n^\LL/dT$ in the left hand side is negative
\item $f(\bx_n)\left( f(\bx_n)+A_n^\LL \right)<0$.
\end{itemize}
Hence, to prove the result we need to show that
\begin{equation}\label{eq-toprove}
a+(1-a)\frac{f(\bx_n)}{f(\bx_n)+A_n^\LL}>0.
\end{equation}
We know that if $T\to\infty$ then $f(\bx_n)\to 0$ and $A_n^\LL\to Q_c$
(Proposition~\ref{prop:Tlarge}). Hence \eqref{eq-toprove} holds for large $T$.
We will prove that $a+(1-a)f(\bx_n)/(f(\bx_n)+A)=0$ is not possible.  First note
that this equation is equivalent to $\bx_n=f^{-1}(-aA)$.  Hence we consider the
following system of three equations:
\begin{equation}
\begin{aligned}
&\int_{\bx_n}^\theta \frac{dx}{f(x)+A_n^\LL}+n\int_0^\theta\frac{dx}{f(x)+A_n^\LL}=aT\\
&\int_\theta^{\bx_n}\frac{dx}{f(x)}=(1-a)T\\
&\bx_n=f^{-1}(-aA_n^\LL).
\end{aligned}
\label{eq-3eq}
\end{equation}
and we show that the three equations in~\eqref{eq-3eq} cannot be
simultaneously satisfied. Eliminating the variables we get
\begin{equation*}
\int_{f^{-1}(-aA_n^\LL)}^\theta\left ( \frac{a}{f(x)}+\frac{1-a}{f(x)+A_n^\LL}\right
)dx+ n\int_0^\theta\frac{(1-a)}{f(x)+A_n^\LL}dx=0.
\end{equation*}
On one hand, $A_n^\LL$ is large enough to make the system spike, and
hence $f(\theta)+A_n^\LL>0$. On the other hand, as $\bx_n<\theta$,
from the last equation of Eq.~\eqref{eq-3eq} we get that
$-aA_n^\LL<f(\theta)$.  Therefore, we know that $0<-f(\theta)\le
A_n^\LL< -f(\theta)/a$. We define the function
\begin{equation*} \psi(A)=\int_{f^{-1}(-aA)}^\theta\left (
\frac{a}{f(x)}+\frac{1-a}{f(x)+A}\right )dx+
n\int_0^\theta\frac{(1-a)}{f(x)+A}dx.  \end{equation*}
Clearly $\psi(-f(\theta)/a)>0$. We compute
\begin{align*}
\psi'(A)&=\frac{a}{f'(f^{-1}(-aA))}\left (-\frac{1}{A}+\frac{1}{A}\right )
-\int_{f^{-1}(-aA)}^\theta\frac{1-a}{(f(x)+A)^2} dx\\
&-n\int_0^\theta\frac{1-a}{\left(
f(x)+A
\right)^2}dx\\
&=  -\int_{f^{-1}(-aA)}^\theta\frac{1-a}{(f(x)+A)^2} dx -n\int_0^\theta\frac{1-a}{\left(
f(x)+A
\right)^2}dx< 0.
\end{align*}
Hence $\psi$ is strictly monotonic so it cannot have a $0$ in $[-f(\theta),
-f(\theta)/a)$. 

We now show that the same holds for the right bifurcations of the fixed points
(see fig.~\ref{fig:T-per_R-bif}); that is,
$dA_n^\R/dT<0$.\\
In this case, the equations that determine such bifurcation become
\begin{align*}
&\int_{\bx_n}^\theta\frac{dx}{f(x)+A_n^\R}+n\int_0^\theta\frac{dx}{f(x)+A_n^\R}=aT\\
&\int_0^{\bx_n}\frac{dx}{f(x)}=(1-a)T,
\end{align*}
Unlike in the previous case, $\bx_n$ increases to $\bx$ (the
equilibrium point assumed in \emph{H}.1) when $T$ is increased.  This
leads to a decrease of the time of the first spike (value of the first
integral). Hence, one necessary needs to decrease $A_n^\R$ in order to
keep these equations satisfied.
\end{proof}

We will now use Propositions~\ref{prop:Tlarge} and~\ref{prop:Tsmall} to derive
information about the behavior of the firing-rate for large and small periods,
Propositions~\ref{prop:fr_Tlarge} and~\ref{prop:fr_Tsmall}, respectively. First
we present the next corollary of
Propositions~\ref{prop:Tlarge},~\ref{prop:Tsmall} and
Lemma~\ref{lem:monotonicity}. It provides a partition of the parameter space in
three different regions regarding spiking properties for different values of
$T$.
\begin{corol}\label{cor:regions}
The parameter space $d\times 1/A$ is divided in three main regions with the
following properties
\begin{itemize}
\item \emph{Non-spiking region},
\begin{equation}
\left\{ (d,1/A)\in\RR^2\;|\;d\in(0,1),\,A<Q_c \right\},
\label{eq:subthreshold_region}
\end{equation}
for which the corresponding periodic orbit does not contain any spike for any
$d\in(0,1)$ and $T>0$.
\item \emph{Permanent-spiking region},
\begin{equation}
\left\{ (d,1/A)\in\RR^2\;|\;d\in(0,1),\;A>Q_c/d \right\},
\label{eq:sbcrictical_region}
\end{equation}
for which, the existing periodic orbit contains spikes for all
$T>0$.
\item \emph{Conditional-spiking region},
\begin{equation}
\left\{ (d,1/A)\in\RR^2\;|\;d\in(0,1),\,Q_c<A<Q_c/d \right\},
\label{eq:supercritical_region}
\end{equation}
for which there exists $T_0>0$ such that the corresponding periodic orbit
contains spikes if $T>T_0$ and does not if $T<T_0$.
\end{itemize}
\end{corol}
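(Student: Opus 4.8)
The plan is to verify that the three regions genuinely partition the parameter strip $d\times 1/A$ with $d\in(0,1)$, and then to establish the spiking behavior claimed in each by combining the limiting information from Propositions~\ref{prop:Tlarge} and~\ref{prop:Tsmall} with the monotonicity in $T$ from Lemma~\ref{lem:monotonicity}. Since $0<Q_c<Q_c/d$ for every $d\in(0,1)$, the three conditions $A<Q_c$, $Q_c<A<Q_c/d$ and $A>Q_c/d$ are mutually exclusive and cover all of $A>0$ (up to the two bifurcation thresholds $A=Q_c$ and $A=Q_c/d$ themselves, which lie on measure-zero curves), so the partition is immediate. The real content is to show that the \emph{presence or absence of spikes} in the attracting periodic orbit is governed precisely by these thresholds.

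First I would treat the non-spiking region. The key observation is that the uppermost bifurcation curve $A=A_0(d,T)$ separates the purely subthreshold $T$-periodic orbit (no spike) from all spiking dynamics. By Lemma~\ref{lem:monotonicity} the value $A_0(d,T)$ is monotonic in $T$, and by Propositions~\ref{prop:Tlarge} and~\ref{prop:Tsmall} it satisfies $\lim_{T\to\infty}A_0(d,T)=Q_c$ and $\lim_{T\to0}A_0(d,T)=Q_c/d$. Hence for fixed $d$ the function $T\mapsto A_0(d,T)$ is monotone decreasing from $Q_c/d$ down to $Q_c$, so its range is exactly the open interval $(Q_c,Q_c/d)$. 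Consequently, if $A<Q_c$ then $A<A_0(d,T)$ for all $T>0$: the system sits above the bifurcation curve for every period, so only the non-spiking orbit exists, giving the first bullet. Symmetrically, if $A>Q_c/d$ then $A>A_0(d,T)$ for all $T>0$, placing the system below the bifurcation curve for every period, so the attracting orbit always contains spikes; this is the permanent-spiking region.

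For the conditional-spiking region $Q_c<A<Q_c/d$, the same monotone surjectivity of $T\mapsto A_0(d,T)$ onto $(Q_c,Q_c/d)$ yields a \emph{unique} $T_0>0$ with $A_0(d,T_0)=A$. Because $A_0(d,T)$ is strictly decreasing in $T$, we have $A>A_0(d,T)$ precisely when $T>T_0$ (the orbit then spikes) and $A<A_0(d,T)$ precisely when $T<T_0$ (the orbit is subthreshold), which is exactly the stated dichotomy. The main obstacle I anticipate is making the monotonicity argument fully rigorous at the level of \emph{strict} monotonicity together with continuity of $A_0(d,\cdot)$, so that the intermediate value theorem delivers a genuine crossing $T_0$ and surjectivity onto the full open interval. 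Lemma~\ref{lem:monotonicity} supplies the sign of the derivative, and the limits from Propositions~\ref{prop:Tlarge} and~\ref{prop:Tsmall} pin down the endpoints; continuity of $A_0(d,T)$ in $T$ follows from smooth dependence of the flow $\varphi$ and of the bifurcation equation~\eqref{eq:bif_equation} on $T$, via the implicit function theorem. One should also remark that the stated behavior presupposes contractiveness of the stroboscopic map (Remark~\ref{rem:contractiveness}), so that the attracting periodic orbit is unique and the firing-rate is well defined; outside that regime the crossing $T_0$ is still determined by $A_0$, but the partition describes the existence of spikes in the relevant $T$-periodic orbit rather than a globally unique attractor.
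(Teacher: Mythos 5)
Your argument is correct and is precisely the one the paper intends: the corollary is stated there without an explicit proof, as an immediate consequence of Propositions~\ref{prop:Tlarge} and~\ref{prop:Tsmall} together with Lemma~\ref{lem:monotonicity}, and your write-up simply makes explicit the monotone sweep of the curve $A_0(d,T)$ from $Q_c/d$ (as $T\to0$) down to $Q_c$ (as $T\to\infty$) and the resulting unique crossing $T_0$ in the intermediate case. Your closing remarks on continuity, strict monotonicity and contractiveness are sensible refinements of the same approach rather than a departure from it.
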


\begin{remark}\label{rem:spiking_region}
The spiking-region is formed by the union of the conditional and
permanent-spiking regions.
\end{remark}

\begin{corol}\label{coro:devils_staircase}
If $(d,1/A)$ belongs to the spiking-region, then, for those values of
$T$ for which $\s$ is contracting in $[0,\theta]$, the firing-rate
$r(T)$ follows a devil's staircase with monotonically decreasing
steps. For the values of $T$ for which $\s$ loses contractiveness
$r(T)$ is a monotonically increasing function.
\end{corol}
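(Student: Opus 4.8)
The plan is to reduce the dependence of $r(T)=\eta(T)/T$ on $T$ to the already-understood dependence of the firing-number $\eta$ on the parameters along a line of the $d\times 1/A$ plane, and then to divide by $T$. Throughout I fix $(d,1/A)$ in the spiking region, so that $A>Q_c$. The first step is to combine Lemma~\ref{lem:monotonicity} with Propositions~\ref{prop:Tlarge} and~\ref{prop:Tsmall}: the lemma gives that each border-collision value $A_n^{\R,\LL}(d,T)$ is strictly monotone in $T$ (and, by its closing sentence, so are the boundaries of the period-adding tongues), while the propositions pin down the limits ($Q_c$ as $T\to\infty$, and $\infty$, respectively $Q_c/d$ for $A_0$, as $T\to0$). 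Consequently, at the fixed height $1/A$, letting $T$ grow makes the point cross these curves one after another in exactly the order in which they are met when one slides along a line of $d\times 1/A$ from the non-spiking region towards the horizontal axis (the segment of Figure~\ref{fig:d-invA_generic_regions}). This monotone correspondence between ``increasing $T$'' and ``moving along the line'' is the backbone of the argument.

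Next I would transport the period-adding description of \S\ref{sec:bif_scenario}. By Remarks~\ref{rem:rotation_numbers-eta_B-C} and~\ref{rem:rotation_numbers-eta}, along that line $\eta$ is a monotonically increasing devil's staircase from $0$ to $\infty$, equal to the integer $n$ on each gray interval and to the intermediate Farey rationals on the white intervals, this structure being guaranteed precisely when $\s$ is contracting (Remark~\ref{rem:contractiveness}). Through the correspondence above it transfers verbatim to the variable $T$: on the set of $T$ for which $\s$ contracts, $\eta(T)$ is a non-decreasing devil's staircase. On each maximal interval where $\eta(T)\equiv c$ one then has $r(T)=c/T$, which is strictly decreasing; hence $r$ is a devil's staircase whose steps, rather than being flat, slope monotonically downwards, which is the first assertion. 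In the conditional-spiking case this is read on the range $T>T_0$ of Corollary~\ref{cor:regions} where $\eta\ge 1$, while for $T<T_0$ one trivially has $r\equiv 0$.

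For the second assertion I would invoke the dichotomy of Remark~\ref{rem:contractiveness}: on the set of $T$ where $\s$ loses contractiveness (necessarily a bounded range, since $\s$ contracts for $T$ small and large), the rotation number, and hence $\eta$, is no longer locally constant but a continuous, strictly increasing function of $T$. The only real work is to pass from ``$\eta$ increasing'' to ``$r=\eta/T$ increasing'', and this is where I expect the main obstacle: since $r'=(\eta'T-\eta)/T^{2}$ has an a priori indefinite sign, monotonicity of $r$ is not automatic. I would resolve it by estimating the width of a loss-of-contractiveness window, showing that $\eta$ rises steeply enough there—quantitatively, $\eta'(T)\,T>\eta(T)$, so that $r'>0$—by bounding that width from the bifurcation equations used in the proof of Lemma~\ref{lem:monotonicity}. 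Making this width estimate quantitative is the delicate part; by contrast, the contracting case is essentially bookkeeping on top of the three cited results.
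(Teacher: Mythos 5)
Your treatment of the first assertion is exactly the paper's: the authors combine Lemma~\ref{lem:monotonicity} with Propositions~\ref{prop:Tlarge} and~\ref{prop:Tsmall} to conclude that, as $T$ increases, every bifurcation curve sweeps monotonically through the fixed point $(d,1/A)$, so the bifurcation diagram in $T$ reproduces the one obtained by moving along the line of Figure~\ref{fig:d-invA_generic_regions} with $T$ fixed; they then invoke Remarks~\ref{rem:rotation_numbers-eta_B-C} and~\ref{rem:rotation_numbers-eta} together with~\eqref{eq:firing_rate} to turn the locally constant steps of $\eta(T)$ into the monotonically decreasing pieces $c/T$ of $r(T)$. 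Nothing to add there.

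For the second assertion the two texts diverge. The paper disposes of it in one sentence: when $\s$ becomes expanding on $[\Sigma_n,\theta]$ the rotation number is no longer a devil's staircase but a continuous monotonically increasing function (citing \Firstpaper{} and~\cite{AlsGamGraKru14}), and the claimed monotonicity of $r(T)$ is simply read off from that. You are right that this elides a step: monotonicity of $\eta(T)$ does not by itself give monotonicity of $r(T)=\eta(T)/T$, since $r'=(\eta'T-\eta)/T^2$ has no a priori sign. Your plan to establish $\eta'(T)\,T>\eta(T)$ on the non-contractive window is the honest way to close this, and it is helped by the observation that this window sits between the crossings of the curves $A_0(d,T)$ and $A_1^\R(d,T)$, where $\eta$ rises continuously from $0$ to $1$: at the left endpoint $\eta=0$ forces $r'=\eta'/T\ge 0$, so the inequality can only fail further inside the window. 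But as written your proposal stops at announcing this estimate rather than performing it, so the second assertion remains unproved in your text --- just as it is, strictly speaking, left to the cited references in the paper's own argument. Flagging the $\eta/T$ issue is a genuine point in your favour; completing the width estimate would make your proof strictly more self-contained than the published one.
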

\begin{proof}
It follows from Propositions~\ref{prop:Tlarge},~\ref{prop:Tsmall} and
Lemma~\ref{lem:monotonicity} that, as long as $T$ is such that $\s$ is
contracting (see Remark~\ref{rem:contractiveness}), then the
bifurcation lines defining the steps of the devil's staircase move up
monotonically as $T$ is increased.  Hence, if we fix $(d, 1/A)$ in the
spiking region, then as $T$ increases all the bifurcation curves pass
through the point $(d, 1/A)$ and the bifurcation diagram is the same
as when $d$ and $T$ are fixed and $A$ is varied (i.e. when varying
parameters along the line shown in
fig.~\ref{fig:d-invA_generic_regions} for a fixed $T$).\\
Recall that the the rotation number follows a devil's staircase which
is constant along the steps.  Then, using
Remarks~\ref{rem:rotation_numbers-eta_B-C}
and~\ref{rem:rotation_numbers-eta} and  formula~\eqref{eq:firing_rate}
we conclude that $r(T)$ follows a devil's staircase which is
monotonically decreasing along the steps.

If eventually $\s$ becomes expanding in $[\Sigma_n,\theta]$, then the
rotation number does not follow a devil's staircase but a
monotonically increasing continuous function
(see~\cite{GraKruCle13,AlsGamGraKru14} for more details).
\end{proof}

From the previous results we get the next corollary providing the behaviour of
the firing number $\eta$ for large and small values of $T$.
\begin{corol}\label{cor:eta_smallT}
In the spiking region the firing number defined
in~\ref{def:firing_number} (average number of spikes per iteration of
the stroboscopic map) satisfies
\begin{align*}
\lim_{T\to 0} \eta(T)&=0\\
\lim_{T\to\infty}\eta(T)&=\infty.
\end{align*}
Morover, in the conditional spiking region $\eta(T)=0$ for $0<T<T_0=T_0(A,d)$.
\end{corol}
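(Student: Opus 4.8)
The plan is to prove the three assertions separately, decomposing the spiking region into its permanent- and conditional-spiking parts (Corollary~\ref{cor:regions}, Remark~\ref{rem:spiking_region}) and using the observation, justified by Lemma~\ref{lem:monotonicity}, that fixing $(d,1/A)$ and letting $T$ vary sweeps the point through the same ordered family of border collision curves $A_0(d,T)<A_1^\R(d,T)<A_1^\LL(d,T)<A_2^\R(d,T)<\cdots$ that one meets along the scanning line of Figure~\ref{fig:d-invA_generic_regions}. Since the number of spikes of the orbit through $(d,1/A)$ is nondecreasing in $A$, one has the two handles $\eta(T)\ge N$ whenever $A\ge A_N^\R(d,T)$ (the orbit then spikes at least $N$ times per period), and $\eta(T)\le 1$ whenever $A<A_1^\R(d,T)$ (the orbit lives in the period-adding tongue between $0$ and $1$ spikes). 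Throughout, the limits $T\to\infty$ and $T\to0$ fall in the contracting regime of Remark~\ref{rem:contractiveness}, so the period-adding picture and the identification of $\eta$ with the rotation number are legitimate.

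For $T\to\infty$ I argue directly from Proposition~\ref{prop:Tlarge}. Fix $M\in\mathbb{N}$. Because the point lies in the spiking region we have $A>Q_c$, and since $\lim_{T\to\infty}A_M^\R(d,T)=Q_c$ there is $T_M$ with $A_M^\R(d,T)<A$ for all $T>T_M$; hence $\eta(T)\ge M$ for $T>T_M$. As $M$ is arbitrary, $\lim_{T\to\infty}\eta(T)=\infty$.

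The conditional-spiking region and the ``moreover'' clause are then immediate from Corollary~\ref{cor:regions}: in the conditional region $Q_c<A<Q_c/d$ there is $T_0(A,d)>0$ such that the periodic orbit performs no spike for $0<T<T_0$, so its spike count is $n=0$ and $\eta(T)=0$ on $(0,T_0)$; in particular $\lim_{T\to0}\eta(T)=0$ there.

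The remaining and, I expect, hardest case is $T\to0$ in the permanent-spiking region $A>Q_c/d$. By Proposition~\ref{prop:Tsmall} we have $A_0(d,T)\to Q_c/d<A$ and $A_n^{\R,\LL}(d,T)\to\infty$ for $n\ge1$, so for $T$ small the point sits strictly between the curves $A_0$ and $A_1^\R$, i.e. in the tongue between $0$ and $1$ spikes, giving only the bound $0\le\eta(T)\le1$. The bifurcation geometry by itself cannot close the gap to $0$, since the point remains a fixed distance below $A_0$ while $A_1^\R\to\infty$; this is the main obstacle. To finish I would pass to the averaged field $\dot x=f(x)+Ad$: as $Ad=Q>Q_c$ in the permanent region, this field carries $x$ from $0$ to $\theta$ in the finite time $\hdelta=\int_0^\theta dx/(f(x)+Ad)$, so spikes occur at the finite rate $1/\hdelta$ and $\eta(T)=r(T)\,T\to(1/\hdelta)\cdot 0=0$ (the content of Proposition~\ref{prop:fr_Tsmall}, which I would invoke, or reprove through the averaging estimate). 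The delicate step I would write in full is the uniform per-period spike count: after each spike the trajectory is reset to $0$ and needs the fixed time $\delta>0$ to climb back to $\theta$, so once $dT<\delta$ at most one spike occurs per period, and a spiking on-phase must start within $O(T)$ of $\theta$; tracking the value of $x$ at the onset of successive on-phases across the short off-phases shows that the fraction of spiking periods tends to $0$, which is precisely the estimate upgrading $\eta\le1$ to $\eta\to0$.
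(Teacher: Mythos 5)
Your proposal is correct, and it is worth noting that the paper offers no proof of this corollary at all --- it is simply announced as following ``from the previous results''. Your treatment of the $T\to\infty$ limit (via Proposition~\ref{prop:Tlarge}: $A_M^\R(d,T)\to Q_c<A$, hence $\eta(T)\ge M$ for $T$ large) and of the conditional-spiking region (directly from Corollary~\ref{cor:regions}) is exactly what the authors intend, and you correctly identify the one place where the ``previous results'' do not suffice: in the permanent-spiking region, Proposition~\ref{prop:Tsmall} only traps the point between the curves $A_0$ and $A_1^\R$ as $T\to0$, giving $\eta\le 1$ but not $\eta\to0$. Your repair is sound, but be careful with the option of ``invoking Proposition~\ref{prop:fr_Tsmall}'': in the paper's logical order that proposition comes \emph{after} this corollary and its proof explicitly uses $\lim_{T\to0}\eta(T)=0$ as an input, so citing it here would be circular; you must take the second branch of your own alternative and run the inter-spike-time/averaging estimate directly. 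For that step there is in fact a shorter route than averaging: on $[0,\theta]$ the speed is bounded by $C=\sup_{[0,\theta]}|f|+A$, every spike requires the state to traverse $[0,\theta]$ after a reset, and the threshold cannot be reached during an off-phase (since $f(\theta)<0$), so consecutive spikes are separated by at least $\theta/C>0$ uniformly in $T$; hence $r(T)\le C/\theta$ and $\eta(T)=r(T)\,T\to0$ throughout the spiking region, with no case distinction and no appeal to the Bogoliubov--Mitropolski theorem. With that substitution your argument is complete and, if anything, more careful than the paper's.
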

Note that the relation between the firing number and firing-rate given by
equation~\eqref{eq:firing_rate} implies that their asymptotic behavior is not
necessarily the same when $T\to\infty$ or $T\to 0$.  We will now use the results
obtained so far in this section to characterize the limits of the firing-rate as
$T\to\infty$ and as $T\to 0$.  The following result describes the limit of
$r(T)$ as $T\to\infty$ for points in the spiking region.
\begin{proposition}\label{prop:fr_Tlarge}
Let $(d,1/A)$ belong to the spiking-region ($d\in(0,1)$ and $A>Q_c$)
and let $\varphi(t;x;A)$ be the flow associated with $\dot{x}=f(x)+A$.
Let $\delta>0$ be the smallest value such that
\begin{equation*}
\varphi(\delta;0;A)=\theta.
\end{equation*}
Then, the firing-rate satisfies
\begin{equation*}
\lim_{T\to\infty}r(T)=\frac{d}{\delta}.
\end{equation*}
\end{proposition}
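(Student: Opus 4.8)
\textbf{Overall strategy.} The plan is to bypass the precise periodic structure of the stroboscopic map and instead estimate directly the quantity appearing in the definition~\eqref{eq:defirate} of the firing-rate: the number of spikes produced in a long window $[0,MT]$, $M\in\mathbb{N}$, counted period by period. I would split each period $[kT,(k+1)T]$ into the \emph{on-phase} $[kT,kT+dT]$, where the dynamics is $\dot x=f(x)+A$, and the \emph{off-phase} $[kT+dT,(k+1)T]$, where it is $\dot x=f(x)$. The first observation is that all spikes occur in the on-phases: during an off-phase the flow is the autonomous subthreshold one, whose unique equilibrium satisfies $\bx<\theta$ by \emph{H.}1; since $f$ is decreasing (\emph{H.}2), the state is monotonically attracted to $\bx$, hence remains in $[0,\theta)$ and never reaches the threshold.

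\textbf{Counting spikes in one on-phase.} Here I would use that $A>Q_c=-f(\theta)$ (the defining inequality of the spiking region), which gives $f(x)+A\ge f(\theta)+A>0$ on $[0,\theta]$, so that $\delta=\int_0^\theta dx/(f(x)+A)<\infty$ is well defined and every trajectory of $\dot x=f(x)+A$ crosses $\theta$ in finite time. Let $y_k\in[0,\theta)$ be the state at the start of the $k$-th on-phase. The first spike occurs at local time $t_1=\int_{y_k}^{\theta} dx/(f(x)+A)\in[0,\delta]$, and after each reset to $0$ the trajectory, being autonomous, takes \emph{exactly} time $\delta$ to return to $\theta$. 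Hence the number of spikes in an on-phase of length $dT$ is
\begin{equation*}
n_k=1+\left\lfloor \frac{dT-t_1}{\delta}\right\rfloor ,
\end{equation*}
valid as soon as $dT\ge t_1$, which holds for all large $T$. Using $t_1\in[0,\delta]$ one extracts the two-sided bound
\begin{equation*}
\frac{dT}{\delta}-1< n_k\le \frac{dT}{\delta}+1 ,
\end{equation*}
which is uniform in $k$ and in the initial data $y_k$.

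\textbf{Passing to the limit.} Summing over the $M$ periods and using that the off-phases contribute nothing, the number of spikes in $[0,MT]$ lies between $M(dT/\delta-1)$ and $M(dT/\delta+1)$. Dividing by $\tau=MT$, letting $M\to\infty$, and checking the standard interpolation for non-integer multiples of $T$ (the extra fractional period contributes $O(dT/\delta)$ spikes, negligible after division by $\tau$), I obtain
\begin{equation*}
\frac{d}{\delta}-\frac{1}{T}\le r(T)\le \frac{d}{\delta}+\frac{1}{T} .
\end{equation*}
Letting $T\to\infty$ gives $\lim_{T\to\infty}r(T)=d/\delta$. I expect the only delicate point to be the \emph{uniformity} of the per-period count: one must ensure the bound on $n_k$ holds for every $k$ regardless of where the preceding off-phase left the state, which is exactly what the range $t_1\in[0,\delta]$ guarantees; the convergence $y_k\to\bx$ for large $T$ is reassuring but is not needed for the estimate. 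A minor point is to confirm that the generic configuration (no spike landing exactly at $t=dT$) is the relevant one, the exceptional case being non-generic and immaterial in the limit.
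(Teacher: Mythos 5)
Your proof is correct, but it takes a genuinely different route from the paper's. The paper argues through the bifurcation structure of the stroboscopic map: it shows that the fixed point $\bx_n$ converges to the equilibrium $\bx$ as $T\to\infty$, that the gaps $T_{n+1}^\R-T_n^\LL$ between consecutive integer steps of the firing-number staircase vanish, and that each step has asymptotic length $\delta/d$, whence $\eta(T)\sim dT/\delta$ and $r(T)=\eta(T)/T\to d/\delta$. You instead bypass the periodic-orbit machinery entirely and count spikes per forcing period directly from the definition~\eqref{eq:defirate}: since $A>Q_c$ makes $f(x)+A>0$ on $[0,\theta]$, every on-phase produces $1+\lfloor (dT-t_1)/\delta\rfloor$ spikes with $t_1\in(0,\delta]$ uniformly in the entry state, the off-phases produce none (the subthreshold flow is trapped below $\theta$ by \emph{H.}1--\emph{H.}2), and the resulting two-sided bound gives the explicit estimate $|r(T)-d/\delta|\le 1/T$ whenever the limit defining $r$ exists. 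Your approach is more elementary, needs no information about existence or uniqueness of periodic orbits (only that $r(T)$ is well defined, which is a standing hypothesis), and yields a quantitative rate of convergence; what it does not recover is the structural picture the paper extracts along the way — that the devil's staircase $\eta(T)$ degenerates into an ordinary integer staircase with steps of length $\delta/d$ — which the paper reuses in Figure~\ref{fig:fr_typical} and in the optimization argument of \S\ref{sec:optimization}. Your closing remarks correctly identify the only delicate points (uniformity in $k$, the non-generic case of a spike landing exactly at $t=dT$), and your handling of both is sound.
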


\begin{proof}
We will show that the devil's staircase followed by the firing number
$\eta(T)$ converges to a common discontinuous staircase whose each
step has length $\delta/d$. These steps have integer values, $n$, and
correspond to $T$-periodic orbits spiking $n$ times. More precisely,
we prove that
\begin{equation}\label{eq:MKlim0}
\lim_{T\to\infty}\eta(T)-E\left( \frac{\delta T}{d} \right)\to0,
\end{equation}
where $E(x)$ is the integer value of $x$.  The result follows from
\eqref{eq:MKlim0} and from the definition of firing-rate.\\

\noindent To prove \eqref{eq:MKlim0} we focus on a $T$-periodic orbit with $n$
spikes per period. Let $\bx_n(T)$ be the initial condition ($t_0=0$) for such an
orbit (fixed point of the stroboscopic map). This fulfills
\begin{align*}
\varphi(t_n;\bx_n(T);A)&=\theta\\
\varphi(dT-(n-1)\delta-t_1;0;A)&=x'\\
\varphi(T-dT;x';0)&=\bx_n(T),
\end{align*}
for some $0<t_n<\delta$. The last equation tells us that
\begin{equation}
\lim_{T\to\infty}\bx_n(T)=\bx,
\label{eq:xn_infty}
\end{equation}
where $\bx$ is the equilibrium point~\eqref{eq:critical_point} associated with
system $\dot{x}=f(x)$ given by assumptions~\conds{}.\\
At the same time, this tells us that the stroboscopic map converges to a
constant function equal to $\bx$. Recalling that the discontinuities of the
stroboscopic map occur at $x=\Sigma_i$, the gaps at these discontinuities tend
to zero,
\begin{equation*}
\lim_{T\to\infty}\s(\Sigma_n^-)=\lim_{T\to\infty}\s(\Sigma_n^+)=\bx.
\end{equation*}
Hence, when $T\to\infty$ there is no space for periodic orbits with higher
periods. In other words, let $T_n^\R$ and $T_n^\LL$ be the values of $T$ for
which a $T$-periodic orbit spiking $n$ times appears and disappears through
border collisions bifurcations, respectively (Figures~\ref{fig:bif_R}
and~\ref{fig:bif_L}, respectively). Then we have that
\begin{equation*}
\lim_{T\to\infty}T_{n+1}^\R-T_n^\LL=0,
\end{equation*}
and the devil's staircase converges to be a common staircase. Its steps are given by
integer values $n$, as they correspond to the firing-number associated with
$T$-periodic orbits spiking $n$ times.

\noindent We now estimate the length of these steps when $T\to\infty$.
From equation~\eqref{eq:xn_infty} we get that, as $T\to \infty$, $t_1$ converges
to the solution of the equation 
\begin{equation}\label{eq:MKlimt1}
\varphi(t_1;\bx;A)=\theta.
\end{equation}
As, for a fixed value of $A$ and $d$ in the spiking region, the number
of spikes performed by a $T$-periodic orbit tends to infinity as
$T\to\infty$ (Proposition~\ref{prop:Tlarge}), the interval of time
where the spikes occur is of order $n\delta$. Hence, taking into
account the characteristics of the $T$-periodic orbit at its
bifurcation (see Figures~\ref{fig:bif_L}, \ref{fig:bif_R}
and~\Firstpaper), we get
\begin{align*}
dT_n^\R&\sim n\delta\\
dT_n^\LL&\sim (n+1)\delta,
\end{align*}
and thus
\begin{equation*}
T_n^\LL-T_n^\R\sim \frac{\delta}{d},
\end{equation*}
which is the length of the step with integer value.
\begin{figure}
\begin{center}
\subfigure[\label{fig:bif_R}]{
\includegraphics[width=0.5\textwidth]{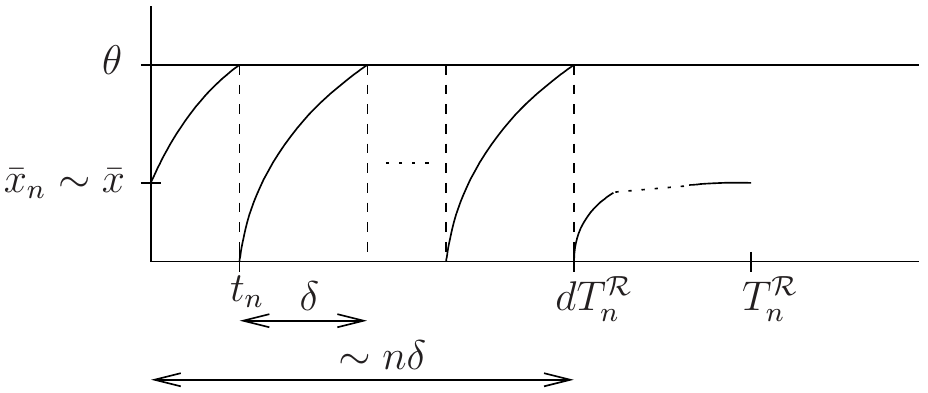}
}
\subfigure[]{
\includegraphics[width=0.5\textwidth]{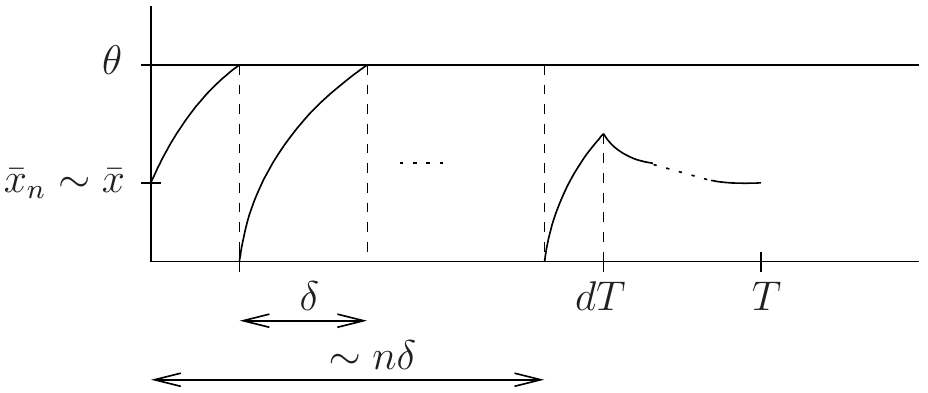}
}
\subfigure[\label{fig:bif_L}]{
\includegraphics[width=0.5\textwidth]{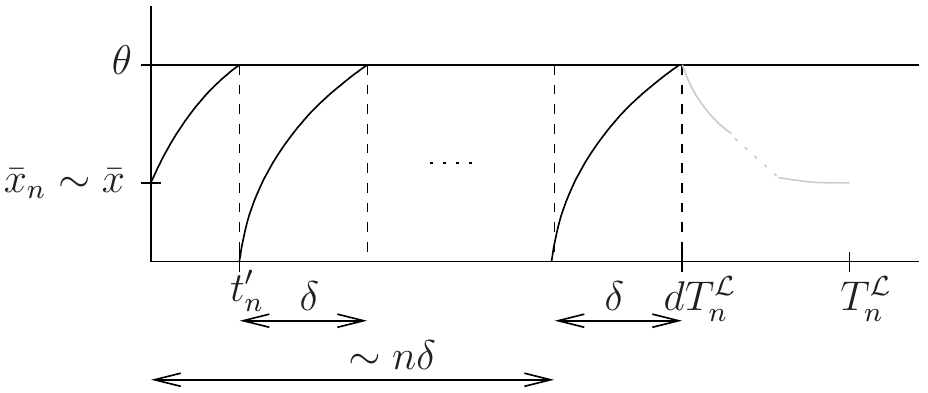}
}
\end{center}
\caption{$T$-periodic orbit spiking $n$ times at its bifurcations for large
values of $T$.  The periodic orbit appears (a) and disappears (c) through border
collision bifurcations at $T=T_n^\R$ and $T=T_n^\LL$, respectively. In (b) a
$T$-periodic orbit for $T\in(T_n^\R,T_n^\LL)$. The gray color emphasizes the
fact that the periodic orbit does not exist for $T=T_n^\LL$ as it should be
reset to $0$ when the threshold is reached. The orbit shown is the limiting
periodic orbit when $T\to \left(T_n^\LL\right)^-$. In all three cases the fixed
point $\bx_n$ approaches $\bx$, the critical point of
system~\eqref{eq:autonomous_system} as $T\to\infty$.}
\label{fig:T-periodic_bifs}
\end{figure}
\end{proof}
We end this section with a result which describes the behavior of $r(T)$
as $T\to 0$.
\begin{proposition}\label{prop:fr_Tsmall}
Let $(d,1/A)$ belong to the spiking-region,
and let
\begin{equation}
\dot{x}=f(x)+Ad
\label{eq:averaged_system}
\end{equation}
be the averaged version of system~\eqref{eq:general_system}. Let
$\hphi(t;x)$ be its associated flow and let $\hdelta>0$ be the
smallest number such that
\begin{equation}
\hphi(\hdelta;0)=\theta.
\label{eq:averaged_collision}
\end{equation}
Then,
\begin{itemize}
\item if $(d,1/A)$ belongs to the conditional-spiking region ($Ad<Q_c$) then
$r(T)=0$ if $T<T_0$, where $T_0$ is given in Corollary~\ref{cor:regions}, 
\item if $(d,1/A)$ belongs to the permanent-spiking region ($Ad>Q_c$), then
\begin{equation*}
\lim_{T\to0}r(T)=\frac{1}{\hdelta}.
\end{equation*}
\end{itemize}
\end{proposition}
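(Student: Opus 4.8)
The conditional-spiking case is immediate: by Corollary~\ref{cor:regions}, when $Ad<Q_c$ there is a threshold $T_0=T_0(A,d)$ below which the attracting periodic orbit performs no spikes, so $r(T)=0$ for $T<T_0$ directly from the definition~\eqref{eq:defirate}. The substance of the proposition is the permanent-spiking case $Ad>Q_c$, which I would prove by an averaging argument: for small $T$ the fast square-wave input~\eqref{eq:pulse} is indistinguishable, to leading order, from its mean $Ad$, so the motion between resets is governed by the averaged system~\eqref{eq:averaged_system}.

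First I would record that the averaged field is uniformly positive on $[0,\theta]$. By H.2 and the monotonicity of $f$, for every $x\in[0,\theta]$ we have $f(x)+Ad\ge f(\theta)+Ad=Ad-Q_c>0$, using $Q_c=-f(\theta)$ from~\eqref{eq:critical_dose_eq}. Hence the averaged flow $\hphi(\cdot\,;x_0)$ increases monotonically from any $x_0\in[0,\theta]$ and crosses $\theta$ transversally; in particular it reaches $\theta$ from $0$ in the finite time $\hdelta$ of~\eqref{eq:averaged_collision}, so the averaged system spikes periodically with inter-spike interval $\hdelta$.

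The key estimate is a phase-uniform averaging bound. Writing $I(t)=Ad+J(t)$ with $J$ of zero mean, period $T$, and $|J|\le A$, the primitive $\beta(t)=\int_{t_0}^t J(s)\,ds$ satisfies $|\beta|\le AT$ uniformly in the starting phase $t_0$, since its integral over each full period vanishes. Setting $y=x-\beta$ turns the forced equation~\eqref{eq:general_system} into $\dot y=f(y+\beta)+Ad$, which differs from~\eqref{eq:averaged_system} only through the $O(T)$ shift $\beta$ inside $f$. Letting $x(t;t_0)$ denote the (pre-reset) solution of~\eqref{eq:general_system}--\eqref{eq:pulse} with $x(t_0;t_0)=0$, Gronwall's inequality with the Lipschitz constant of $f$ on $[0,\theta]$ then gives
\[
\sup_{t\in[t_0,\,t_0+\hdelta+1]}\bigl|x(t;t_0)-\hphi(t-t_0;0)\bigr|=O(T)
\]
uniformly in $t_0$. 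Because $\hphi$ crosses $\theta$ transversally, this $C^0$ closeness upgrades to closeness of crossing times: starting from $0$ at any phase $t_0$, the forced trajectory reaches $\theta$ in a time $\Delta(T,t_0)=\hdelta+O(T)$, again uniformly in $t_0$.

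Finally I would convert this into the firing-rate limit directly from~\eqref{eq:defirate}. After the first spike every excursion begins at the reset value $0$, so for all sufficiently small $T$ every inter-spike interval lies in $[\hdelta-CT,\hdelta+CT]$ for a fixed constant $C$; hence the number $N(\tau)$ of spikes in $[0,\tau]$ is squeezed between $\tau/(\hdelta+CT)-1$ and $\tau/(\hdelta-CT)+1$. Dividing by $\tau$, letting $\tau\to\infty$, and then $T\to0$ yields $\lim_{T\to0}r(T)=1/\hdelta$. The main technical point --- and the step I expect to require the most care --- is precisely the uniformity of the averaging estimate over the reset phase $t_0$, since successive spikes occur at different phases of the driving period; once that uniformity is established, the transversal-crossing and squeezing arguments are routine.
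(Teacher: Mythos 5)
Your proof is correct, but it takes a genuinely different route from the paper's. The paper stays inside the bifurcation machinery of \S\ref{sec:bif_scenario}: using $\lim_{T\to0}\eta(T)=0$ it squeezes $\eta(T)$ between $1/(n+1)$ and $1/n$, identifies the steps of the devil's staircase with rotation number $1/n$ as the $\LL^n\R$ periodic orbits (one spike every $n$ strobe iterates), shows from the defining equations that their base point $\bx_{\LL^n\R}\to0$, and then invokes the Bogoliubov--Mitropolski averaging theorem~\cite{BogMit61} after a time rescaling to conclude $nT+t'\to\hdelta$, whence $r=1/(nT)\to1/\hdelta$. You bypass the symbolic dynamics entirely: you prove the needed averaging estimate by hand (zero-mean decomposition $I=Ad+J$, the $O(T)$-bounded primitive $\beta$, the shift $y=x-\beta$, and Gronwall), upgrade $C^0$-closeness to closeness of threshold-crossing times via transversality of the averaged flow ($f(x)+Ad\ge Ad-Q_c>0$ on $[0,\theta]$), and then squeeze the spike count directly from the definition~\eqref{eq:defirate}. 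Your version is more self-contained and arguably stronger: it does not presuppose the existence or uniqueness of an attracting periodic orbit or the period-adding structure, it gives a quantitative $O(T)$ bound on every inter-spike interval, and it correctly isolates the one delicate point, namely uniformity of the averaging estimate over the reset phase $t_0$ (which the paper implicitly delegates to the cited theorem). What the paper's route buys in exchange is the identification of \emph{which} orbits realize the small-$T$ regime, tying the limit into the Farey/rotation-number picture used throughout. The only loose ends in your argument are routine: $f$ must be taken Lipschitz on an $O(T)$-neighbourhood of $[0,\theta]$ since $y=x-\beta$ can leave the interval by $|\beta|\le AT$, and the transversality step should be stated two-sidedly (the forced trajectory can neither cross much before nor much after $\hdelta$); neither affects the conclusion.
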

\begin{proof}
For the first case we use Corollary~\ref{cor:regions}, from which we get that,
if $T<T_0$ then $\eta(T)=0$, and hence $r(T)=0$.

For the second case we study how the devil's staircase $r(T)$ behaves when
$T\to0$. Note that, when $r(T)<1$, this one coincides with the rotation number
of the periodic orbits found when varying $T$ (see
Remark~\ref{rem:rotation_numbers-eta_B-C}).\\
Using that $\lim_{T\to0}\eta(T)=0$, we get that, for any $T$ small enough, we
can find $n$ large enough such that
\begin{equation*}
\frac{1}{n+1}\le\eta(T)\le\frac{1}{n}.
\end{equation*}
Hence, as $1/n-1/(n+1)\to0$, it is enough to study how the steps given
by the rotation numbers of the form $1/n$ behave.  Taking into account
that the symbolic dynamics is organized by a Farey tree structure (a
one to one mapping with the rotation numbers), this rotation numbers
are associated with periodic orbits with symbolic sequences of the
form $\LL^n\R$. These periodic orbits are characterized by exhibiting
one spike after $n$ iterations of the stroboscopic map, and are
determined by the equations
\begin{align*}
\varphi(dT;\bx_{\LL^n\R};A)&=x_1\\
\varphi(T-dT;x_1;0)&=x_1'\\
\varphi(dT;x_1';A)&=x_2\\
\varphi(T-dT;x_2;0)&=x_2'\\
\vdots\\
\varphi(t';x_n';A)&=\theta\\
\varphi(dT-t';0;A)&=x_{n+1}\\
\varphi(T-dT;x_{n+1};0)&=\bx_{\LL^n\R},
\end{align*}
where $\varphi(t;x;A)$ is the flow associated with system $\dot{x}=f(x)+A$ and
$\bx_{\LL^n\R}$ is the initial condition for the $\LL^n\R$ periodic orbit for
$t_0=0$. From the two last equations, we get that $\lim_{T\to0}\bx_{\LL^n\R}=0$.\\
After applying a time rescaling, the original system~\eqref{eq:general_system} and
its averaged version become
\begin{align}
\dot{x}&=T\left( f(x)+\tilde{I}(t) \right)\label{eq:rescaled_system}\\
\dot{x}&=T\left( f(x)+Ad \right)\label{eq:rescaled_averaged},
\end{align}
where $\tilde{I}(t)$ is now $1$-periodic.
We now consider solutions of systems~\eqref{eq:rescaled_system}
and~\eqref{eq:rescaled_averaged} with $O(T)$ close initial conditions. The
averaging theorem of Bogoliubov and Mitropolski~\cite{BogMit61} tells us that,
if $T$ is small enough, then such solutions remain $O(T)$-close for a $t\sim
1/T$ time scale provided that they have not reached the threshold. Note that the
result given in~\cite{BogMit61} applies because it does not require continuity
in $t$ but boundedness and Lipschitz in $x$.\\
Hence, letting $\hphi(t;x)$ be the flow of the averaged
system~\eqref{eq:averaged_system}, if $T$ is small enough we have that
\begin{equation*}
\varphi(T-dT;\varphi(dT;x;A);0)=\hphi(T;x)+O(T).
\end{equation*}
Hence, as long as the threshold is not reached, we can approximate the real flow
by the averaged one. Using that $\bx_{\LL^n\R}\to0$ when $T\to0$, the time taken
by the real flow to reach the threshold $x=\theta$ from $x=\bx_{\LL^n\R}$
approaches $\hdelta$:
\begin{equation*}
nT+t'\to\hdelta.
\end{equation*}
Hence, as $t'<dT$, $n$ grows like $\hdelta/T$ when $T\to0$ and thus
\begin{equation*}
\lim_{T\to0}r(T)=\lim_{T\to0}\frac{1}{nT}=\frac{1}{\hdelta}.
\end{equation*}
\end{proof}
\begin{remark}\label{rem:r_depdends_on_Q}
Note that equation~\eqref{eq:averaged_collision} makes sense only if
the averaged system~\eqref{eq:averaged_system} has a stable critical
point above the threshold, which occurs if $Ad>Q_c$.  This occurs only
in the permanent-spiking region.
\end{remark}
\begin{remark}
From Proposition~\ref{prop:Tsmall} we get that the value of the firing-rate for
small frequencies depends on the released dose $Ad$, the average of
$I(t)$. However, for large values of $T$ (Proposition~\ref{prop:Tlarge}), it
depends explicitly on $A$ and $d$.
\end{remark}

\subsection{Optimization of the firing-rate}\label{sec:optimization}
As shown in Corollary~\ref{coro:devils_staircase}, the firing-rate as
a function of $T$, the period of the forcing $I(t)$, follows a devil's
staircase with monotonically decreasing pieces (see
Figure~\ref{fig:fr_typical}). This occurs for most values of $T$
except, possibly, in a bounded set, for which the firing-rate is an
increasing function. Each of the pieces forming the devil's
staircase occurs in a $T$-interval, $[T_\sigma^\R,T_\sigma^\LL]$, for
which a unique periodic orbit $\sigma$ exists. Hence, the firing-rate
exhibits local maxima at $T=T_\sigma^\R$, and local minima at
$T=T_\sigma^\LL$. As a consequence of this there exists an infinite
number of local minima and maxima at any interval of the form
$[T_\sigma^\LL,T_\gamma^\R]$, with $\sigma$ and $\gamma$ meaning
different periodic orbits.\\
Of particular interest is when $\sigma$ and $\gamma$ are consecutive
fixed points (spiking $n$ and $n+1$ times), because they occupy the
largest regions in parameter space and their rotation numbers bound
the ones of the periodic orbits, given by alternation of $n$ and $n+1$
spikes. Restricting to this case, we consider the firing-rate in the
frequency range corresponding to $[T_n^\LL,T_{n+1}^\LL]$. We will
prove that the firing-rate follows a devil's staircase with
monotonically decreasing steps but whose envelope is bell shaped; that
is, it increases from $T_n^\LL$ to $T_{n+1}^\R$, where it exhibits an
absolute maximum, and then decreases to $T_{n+1}^\LL$ (see
fig.~\ref{fig:fr_bell}). Note that the bifurcation values $T_n^\LL$,
$T_{n+1}^\R$ and $T_{n+1}^\LL$ can be easily found numerically (by
solving equations~\eqref{eq:bif1} and~\eqref{eq:bif2} for $T$) and
that the values of the firing-rate at these values become $n/T_n^\LL$,
$(n+1)/T_{n+1}^\R$ and $(n+1)/T_{n+1}^\LL$, respectively. In real
applications one is usually restricted to a bounded range of realistic
frequencies for which one observes an absolute maximum of the firing
rate (see for example~\cite{KaiJakSteChi97,dalkin_89}). Hence, this
approach could be applied to properly tune system parameters in order
to make the model exhibit such a behavior for the desired values of
$T$.
\begin{figure}
\begin{center}
\includegraphics[width=0.4\textwidth,angle=-90]{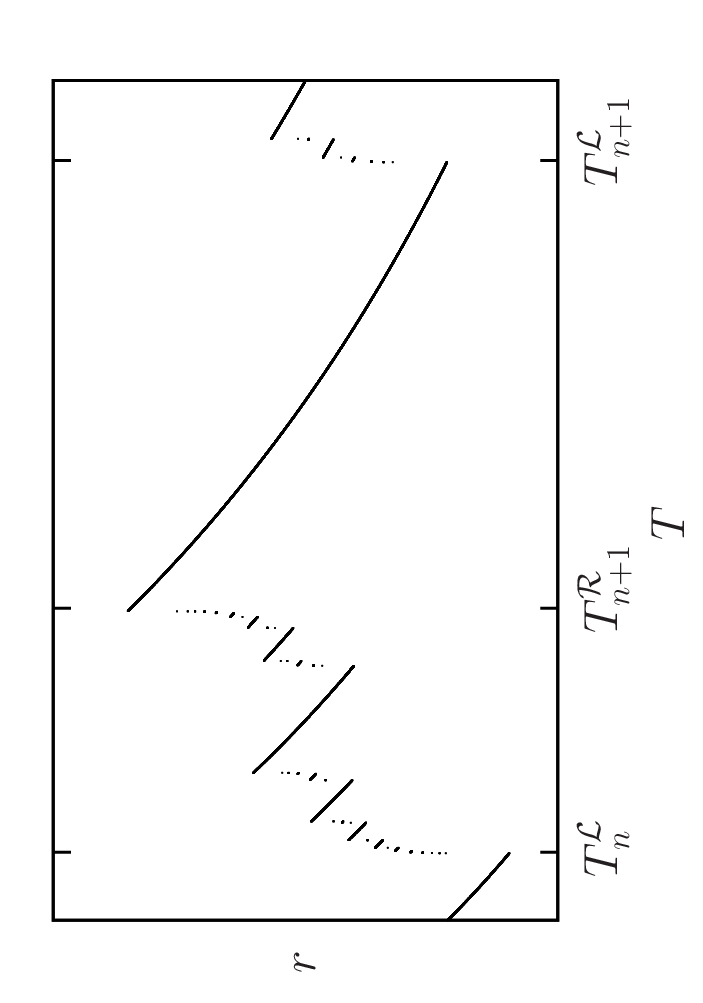}
\end{center}
\caption{Typical response of the firing-rate between two consecutive fixed
points. Its envelope is bell shaped, exhibiting a maximum at $T=T_{n+1}^\R$.}
\label{fig:fr_bell}
\end{figure}

We now investigate the optimization of the firing-rate in the whole range of
periods, $(0,\infty)$.\\
Due to the fact that the firing-rate is bounded and continuous for $T>0$, it
must exhibit a global maximum provided that it is increasing for $T=0^+$. From
the argument above, it must occur at some value of the form $T_\sigma^\R$,
respectively, for some periodic orbit $\sigma$. The next result tells us that,
in general, this periodic orbit will be the $T$-periodic orbit spiking once per
period.

\begin{proposition}\label{prop:optimization}
Let $(d,A)$ be in the spiking region (see Remark~\ref{rem:spiking_region}), and
let $T_1^\R$ and $T_1^\LL$ be the values of $T$ for which the periodic orbit
spiking once per period undergoes right and left border collision. Then, there
exists some $\gamma>0$ such that, if $T_1^\R>\gamma$ then the firing-rate $r(T)$
has global maximum at $T=T_1^\R$.
\end{proposition}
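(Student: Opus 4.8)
The plan is to show that the firing-rate, which we know follows a devil's staircase with monotonically decreasing steps (Corollary~\ref{coro:devils_staircase}), attains its global maximum at $T=T_1^\R$, the value where the $T$-periodic orbit spiking exactly once appears. The key structural fact is that along the devil's staircase each step corresponds to a unique periodic orbit $\sigma$ living on a $T$-interval $[T_\sigma^\R,T_\sigma^\LL]$, and on this step $r(T)=\eta_\sigma/T$ is decreasing; hence the local maxima occur precisely at the left endpoints $T=T_\sigma^\R$, where $r(T_\sigma^\R)=\eta_\sigma/T_\sigma^\R$. So the global maximum must be achieved at one of these left endpoints, and the task reduces to comparing the values $\eta_\sigma/T_\sigma^\R$ over all periodic orbits $\sigma$ and showing that the winner is the once-spiking fixed point provided $T_1^\R$ is large enough.

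\noindent First I would split the range $(0,\infty)$ into three zones and bound $r$ in each. For $T>T_1^\R$, every periodic orbit $\sigma$ has firing-number $\eta_\sigma\ge 1$ but its left endpoint $T_\sigma^\R$ exceeds $T_1^\R$; the idea is that as $T$ grows the firing-rate is governed asymptotically by $d/\delta$ (Proposition~\ref{prop:fr_Tlarge}) and the staircase, once it has reached integer value $n$, has steps of width $\sim\delta/d$ with local maxima $r(T_n^\R)\sim n/(n\delta/d)=d/\delta$. Thus for large $T$ the peaks hover near $d/\delta$ and cannot exceed the isolated peak at $T_1^\R$ once that peak is pushed high enough by taking $T_1^\R$ large. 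For $T<T_1^\R$ the firing-rate stays below $1/T_1^\R$: in this range $\eta(T)\le 1$ (since the once-spiking orbit has not yet appeared, all periodic orbits spike at most once on average), and because $r<1$ coincides with the rotation number, which on $(0,T_1^\R)$ is bounded by $1$, one gets $r(T)=\eta(T)/T$ with $\eta(T)\le 1$; more carefully, on each sub-step with orbit $\LL^k\R$ one has $\eta=1/(k+1)$ and the step sits at $T$-values below $T_1^\R$, giving $r<1/T_1^\R$ except possibly near the top.

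\noindent The crux is the comparison $r(T_1^\R)=1/T_1^\R$ against both the sub-threshold peaks for $T<T_1^\R$ and the integer-plateau peaks for $T>T_1^\R$. For the left side, the relevant competitors are orbits $\LL^k\R$ with $r=1/((k+1)\cdot(\text{step }T))$; since their $T$-values are strictly less than $T_1^\R$ while their $\eta=1/(k+1)<1$, their firing-rates are bounded by roughly $1/T$ evaluated at the step, but the factor $1/(k+1)$ beats the smaller $T$, so one must check $1/((k+1)T_{\LL^k\R}^\R)<1/T_1^\R$, i.e. $(k+1)T_{\LL^k\R}^\R>T_1^\R$. This should follow from the monotonicity in Lemma~\ref{lem:monotonicity} together with the geometry of the period-adding structure, which forces the $\LL^k\R$ orbits to occupy $T$-intervals comparable to $kT_1^\R$. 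For the right side, the competitors are the fixed points spiking $n\ge2$ times, with peaks $r(T_n^\R)=n/T_n^\R$; using $dT_n^\R\sim n\delta$ from the proof of Proposition~\ref{prop:fr_Tlarge} gives $n/T_n^\R\sim d/\delta$, a bounded quantity, so choosing $T_1^\R>\gamma:=d/\delta$ (equivalently $1/T_1^\R>d/\delta$) guarantees the isolated peak dominates.

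\noindent The main obstacle I anticipate is making the two competing bounds rigorous and uniform: the asymptotic estimates $dT_n^\R\sim n\delta$ are only leading-order, so the comparison $n/T_n^\R<1/T_1^\R$ for \emph{all} $n\ge 2$ needs either a monotonicity argument (showing the peaks $n/T_n^\R$ increase toward $d/\delta$ from below, so the supremum is the limit $d/\delta$) or an explicit error control. Establishing that the peak heights $n/T_n^\R$ form a monotone sequence converging to $d/\delta$, and that the sub-threshold peaks are likewise dominated, is the delicate analytic step; everything else is bookkeeping on the devil's staircase. I would therefore define $\gamma$ so that $1/\gamma$ strictly exceeds the supremum of all peak heights for $n\ge2$ and all $\LL^k\R$ peaks, which by the above is $d/\delta$, and then the hypothesis $T_1^\R>\gamma$ makes $r(T_1^\R)=1/T_1^\R$ the unique global maximum.
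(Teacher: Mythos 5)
Your skeleton agrees with the paper's: the global maximum must sit at a left endpoint $T_\sigma^\R$ of some step, the serious competitors for $T>T_1^\R$ are the integer plateaus with peak heights $n/T_n^\R$, and the proof reduces to showing $1/T_1^\R>2/T_2^\R>\cdots$. The gap is in how you carry out that comparison. You propose to bound all peaks with $n\ge 2$ by their supremum, assert that the sequence $n/T_n^\R$ \emph{increases} toward $d/\delta$ so that this supremum is $d/\delta$, and then impose $1/T_1^\R>d/\delta$. The monotonicity goes the other way. Writing $dT_n^\R=t_n+(n-1)\delta$, where $t_n$ is the duration of the first charging phase of the $n$-spike orbit started from $\bx_n>0$, one has $t_n<\delta$ (the orbit starts above the reset value $0$), hence
\begin{equation*}
\frac{n}{T_n^\R}=\frac{nd}{t_n+(n-1)\delta}>\frac{d}{\delta}\quad\text{for every }n,
\end{equation*}
so the peaks decrease to $d/\delta$ \emph{from above} and their supremum over $n\ge 2$ is $2/T_2^\R>d/\delta$. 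The condition $1/T_1^\R>d/\delta$ therefore does not dominate them. (There is also an algebra slip: $T_1^\R>d/\delta$ is not equivalent to $1/T_1^\R>d/\delta$, and the former compares a time with a rate.) Your fallback, defining $\gamma$ so that $1/\gamma$ exceeds all peak heights, is moreover circular: those heights and $T_1^\R$ are determined by the same system parameters and cannot be tuned independently.

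The missing idea, which is the actual content of the paper's proof, is the pairwise comparison of consecutive peaks: from $dT_n^\R=t_n+(n-1)\delta$ the inequality $n/T_n^\R>(n+1)/T_{n+1}^\R$ is \emph{equivalent} to $\delta>n(t_n-t_{n+1})+t_n$, and this holds because $t_n$ decreases exponentially (hyperbolicity of $\bx$) to a limit $\bar t<\delta$, where $\bar t$ is the threshold-crossing time started from $\bx$ rather than from $0$. The hypothesis ``$T_1^\R>\gamma$'' is precisely what forces $\bx_1$ to be close enough to $\bx$ for this inequality to hold already at $n=1$, after which the whole chain follows. Your treatment of the range $T<T_1^\R$ is about as detailed as the paper's own appeal to the bell-shaped envelope on each $[T_n^\LL,T_{n+1}^\R]$, so I would not count that part as an additional defect; the decisive failure is the direction of monotonicity of $n/T_n^\R$ and the resulting choice of $\gamma$.
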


\begin{proof}
Let $T_n^{\R,\LL}$ be the values of $T$ for which a $T$-periodic orbit spiking
$n$ times undergoes border collision bifurcation on the right and left,
respectively. As we know, the firing-number is a monotonically increasing
function from $T_n^\LL$ to $T_{n+1}^\R$, for any $n$. Hence, the maximum must
occur for some $T_n^\R$, right border collision bifurcation of the $T$-periodic
orbit spiking $n$ times (see Figure~\ref{fig:T-per_R-bif} for $n=2$).\\ Taking
into account relation~\eqref{eq:firing_rate} and recalling that the firing
number for such orbits is $n$, the number of spikes, it will be enough to show
that
\begin{equation*}
\frac{1}{T_1^\R}>\frac{2}{T_2^\R}>\frac{3}{T_3^\R}\dots,
\end{equation*}
if $T_1^\R$ is large enough in order to see that this periodic orbit spikes once
per period.

Let $\varphi(t;x_0;A)$ and $\delta>0$ be as in Proposition~\ref{prop:fr_Tlarge},
and let $\bx_n$ be the fixed point of the stroboscopic map leading to the
$T$-periodic orbit spiking $n$ times. Then $T_n^\R$ is determined by the
following equations (see Figure~\ref{fig:T-per_R-bif} for $n=2$)
\begin{align*}
\varphi(t_n;\bx_n;A)&=\theta\\
\varphi(T_n^\R-dT_n^\R;0;0)&=\bx_n\\
t_n+(n-1)\delta&=dT_n^\R.
\end{align*}
As $T_{n+1}^\R>T_{n}^\R$, recalling that the flow $\varphi(t;x_0;0)$ is
exponentially attracted by the equilibrium point $\bx_n$, from the second
equation it comes that, at the moment of the bifurcation
\begin{align*}
\bx_n<\bx_{n+1}&\to\bx\\
t_n>t_{n+1}&\to \bar{t},
\end{align*}
where $\bar{t}$ is the smallest such that $\varphi(\bar{t};\bx;A)=\theta$.\\
As these series converge exponentially (due to the hyperbolicity of $\bx$), we
have that there exists some $N>0$, $0<\lambda<1$ and $K>0$ such that
\begin{equation}
n\underbrace{(t_n-t_{n+1})}_{<K\lambda^n}+t_n\to \bar{t},\,n>N.
\label{eq:tn_convergence}
\end{equation}
Assuming $n>N$ large enough and using that
$\delta>\bar{t}$ we get 
\begin{align}
\delta>n\left( t_n-t_{n+1} \right)+t_n&\Longleftrightarrow
nt_{n+1}+\delta>(n+1)t_n\label{eq:inequality}\\
&\Longleftrightarrow
n\overbrace{(t_{n+1}+n\delta)}^{T_{n+1}^\R}>(n+1)\overbrace{(t_n+(n-1)\delta)}^{T_n^\R}\\
&\Longleftrightarrow \frac{n}{T_n^\R}>\frac{n+1}{T_{n+1}^\R}.
\end{align}
In particular, if $T_1^\R$ is large enough, $\bx_1$ is close enough to $\bx$ to
fulfill~\eqref{eq:tn_convergence} and~\eqref{eq:inequality} for $n=1$.
\end{proof}

\begin{remark}
Note that, $T_1^\R$ will be large enough if $\bx$ is attracting enough.
\end{remark}

\begin{remark}\label{rem:global_minimum}
Arguing similarly, the global minimum will be the minimum of $0$ (if $(d,A)$
belongs to the conditional spiking region), $1/\hdelta$
(Proposition~\ref{prop:fr_Tsmall}) and $1/T_1^\LL$. Note that if the minimum
corresponds to $1/\hdelta$, then it technically does not exist, as $T=0$ is
excluded from the domain.
\end{remark}

\begin{corol}
For a given system~\SYSTEMWR{}, the globally maximal firing-rate is achieved
with the combination of period $T$, dose $Q$, amplitude $A$ and duty cycle $d$
such that the straight line $1/A=1/Qd$ is tangent to the bifurcation curve
$A=A_n^\R(d)$ for the smallest possible value of $T$.
\end{corol}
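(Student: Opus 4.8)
The plan is to reduce the multi-parameter optimization to a constrained minimization in the two-dimensional parameter plane $d\times 1/A$ and then extract the tangency condition via a Lagrange-multiplier argument. First I would fix the dose $Q$, so that dose conservation $Ad=Q$ confines the admissible parameters to the straight line $L_Q=\{(d,1/A)\,:\,1/A=d/Q\}$ through the origin. By Proposition~\ref{prop:optimization} (and the remark following it), for each fixed $(d,A)$ on $L_Q$ lying in the spiking region the firing-rate $r(T)$, viewed as a function of $T$, attains its global maximum at $T=T_1^\R(d,A)$ with value $1/T_1^\R$; more generally the maximum occurs at some right border-collision value $T_n^\R$, so the best achievable rate at $(d,A)$ equals $1/T_n^\R(d,A)$. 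Consequently, maximizing the firing-rate over all release strategies compatible with the dose $Q$ is equivalent to minimizing the function $(d,A)\mapsto T_n^\R(d,A)$ along $L_Q$.

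The key observation is that the level sets of this function are exactly the border-collision curves of the scenario. Indeed, by definition $T_n^\R(d,A)=T$ precisely when the point $(d,1/A)$ lies on the curve $A=A_n^\R(d,T)$, so that $\{A=A_n^\R(d,T)\}$ is the level set $\{T_n^\R=T\}$. Lemma~\ref{lem:monotonicity} guarantees that $A_n^\R$, equivalently $T_n^\R$, is monotonic in $T$, and Propositions~\ref{prop:Tlarge} and~\ref{prop:Tsmall} show that as $T$ ranges over $(0,\infty)$ these curves sweep monotonically and continuously from the horizontal axis up to the line $1/A=1/Q_c$. Thus the family $\{A_n^\R(\cdot,T)\}_{T>0}$ foliates the spiking region, and $T_n^\R$ is a well-defined function whose value decreases as one moves toward the lower part of the region.

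With this picture the constrained minimization is standard. At an interior minimum of $T_n^\R$ restricted to $L_Q$, the gradient $\nabla T_n^\R$ must be parallel to $\nabla(Ad)$, which geometrically means that the level curve of $T_n^\R$ through the minimizer --- namely the bifurcation curve $A=A_n^\R(d)$ for the minimizing value $T=T^\ast$ --- is tangent to the constraint line $L_Q$. Since $T_n^\R$ decreases as the curves sweep upward, this tangency is precisely the first contact of the sweeping family with $L_Q$, i.e.\ it occurs for the smallest value of $T$ for which a curve of the family meets $L_Q$; this is the ``smallest possible $T$'' of the statement. The corresponding firing-rate is then $1/T^\ast$, the sought global maximum.

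The main obstacle is to guarantee that the restricted minimum is attained in the interior and is a genuine tangency rather than a transversal first crossing: this requires controlling the relative geometry (essentially a convexity/monotonicity comparison) between the curves $A_n^\R(\cdot,T)$ and the straight line $L_Q$, using Lemma~\ref{lem:monotonicity} to exclude that $T_n^\R$ restricted to $L_Q$ is monotone with an endpoint extremum. One must also invoke Proposition~\ref{prop:optimization} to justify reducing the full optimization to the period-one curve ($n=1$) in the generic case where $T_1^\R$ is large enough, and to identify the correct index $n$ otherwise.
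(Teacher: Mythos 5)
The paper states this corollary without an explicit proof, so there is no line-by-line comparison to make; your argument is the natural one and is exactly how the corollary is meant to follow from the preceding material. Your reduction --- fix $Q$, restrict to the constant-dose line $1/A=d/Q$, use Proposition~\ref{prop:optimization} to say that the best rate attainable at a point $(d,A)$ of that line is $1/T_1^\R(d,A)$, identify the level sets of $(d,A)\mapsto T_1^\R(d,A)$ with the bifurcation curves $A=A_1^\R(d,T)$, and invoke the monotone upward sweep of these curves in $T$ (Lemma~\ref{lem:monotonicity} together with Propositions~\ref{prop:Tlarge} and~\ref{prop:Tsmall}) to conclude that the constrained minimum of $T_1^\R$ is realized at the first, hence tangential, contact of the sweeping family with the line --- is precisely the intended geometric content. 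The Lagrange-multiplier phrasing is correct but adds nothing beyond the first-contact picture: once the curves for $T<T^\ast$ lie strictly on one side of the line and the curve at $T^\ast$ touches it at an interior point, tangency is automatic for $C^1$ curves.

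Two caveats, both of which you partially flag but should state more carefully. First, a slip in the reduction: if at some $(d,A)$ the maximum of $r(\cdot)$ occurs at $T_n^\R$ with $n>1$, the corresponding rate is $n/T_n^\R$, not $1/T_n^\R$, so ``maximize the rate'' is equivalent to ``minimize $T_n^\R$'' only within a fixed family $n$; comparing across $n$ requires the pointwise chain $1/T_1^\R>2/T_2^\R>\cdots$ of Proposition~\ref{prop:optimization}, which is only guaranteed under its hypothesis that $T_1^\R$ be sufficiently large. Since for fixed $(d,T)$ the curves $A_n^\R$ with larger $n$ lie lower in the $d\times 1/A$ plane, the $n=1$ family does reach the line first, so under that hypothesis the ``smallest possible $T$'' in the statement is the $n=1$ tangency. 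Second, interiority of the first contact is genuinely unresolved: both the line $1/A=d/Q$ and each curve $1/A=1/A_1^\R(d,T)$ emanate from the origin, so one must exclude that the infimum of $T_1^\R$ along the line is only approached as $d\to0^+$ (or at $d\to 1^-$), in which case no tangency occurs. Neither point is addressed in the paper, which states the corollary informally, so your proposal is as complete as the source permits; but a self-contained proof would need these two steps filled in.
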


\section{Example}\label{sec:examples}
In this section we use the results presented so far to study the behavior of the
firing-rate under frequency variation for different configurations. For the sake
of simplicity, we choose to study such configurations for a linear system, as it
will permit us to compute explicitly the quantities involved in the results of
section~\ref{sec:freq_properties}. However, we emphasize that these quantities
are straight forward to compute numerically for other type of systems for which
conditions~\conds{} hold.
\subsection{Linear integrate and fire model}
Let
\begin{equation}
f(x)=ax+b.
\label{eq:linear_system}
\end{equation}
In order to satisfy conditions~\conds{}, we require that $a<0$ and
$\bx=-b/a\in(0,\theta)$, where $x=\theta$ is the threshold of the integrate and
fire system~\SYSTEMWR{}.\\

For system~\SYSTEMWRLinear{}, the critical dose~\eqref{eq:critical_dose_eq}
becomes
\begin{equation*}
Q_c=-(a\theta+b),
\end{equation*}
which is the minimal amplitude of the pulse~\eqref{eq:pulse} for which the
system~\SYSTEMWRLinear{} can exhibit spikes.

The linearity of the system permits us to also  explicitly compute the quantity
$\delta$ involved in Proposition~\ref{prop:fr_Tlarge},
\begin{equation}\label{eq:delta_linear}
\delta=\frac{1}{a}\ln\left( \frac{\theta a}{b+A}+1 \right).
\end{equation}
The averaged version of system~\SYSTEMWRLinear{} becomes $\dot{x}=f(x)+Ad$, for
which we can also explicitly compute the quantity $\hdelta$ involved in
Proposition~\ref{prop:fr_Tsmall},
\begin{equation*}
\hdelta=\frac{1}{a}\ln\left( \frac{\theta a}{b+Ad}+1 \right).
\end{equation*}
\begin{figure}
\begin{picture}(1,1.1)
\put(0,1.1){
\subfigure[\label{fig:T0d1}]{\includegraphics[angle=-90,width=0.5\textwidth]
{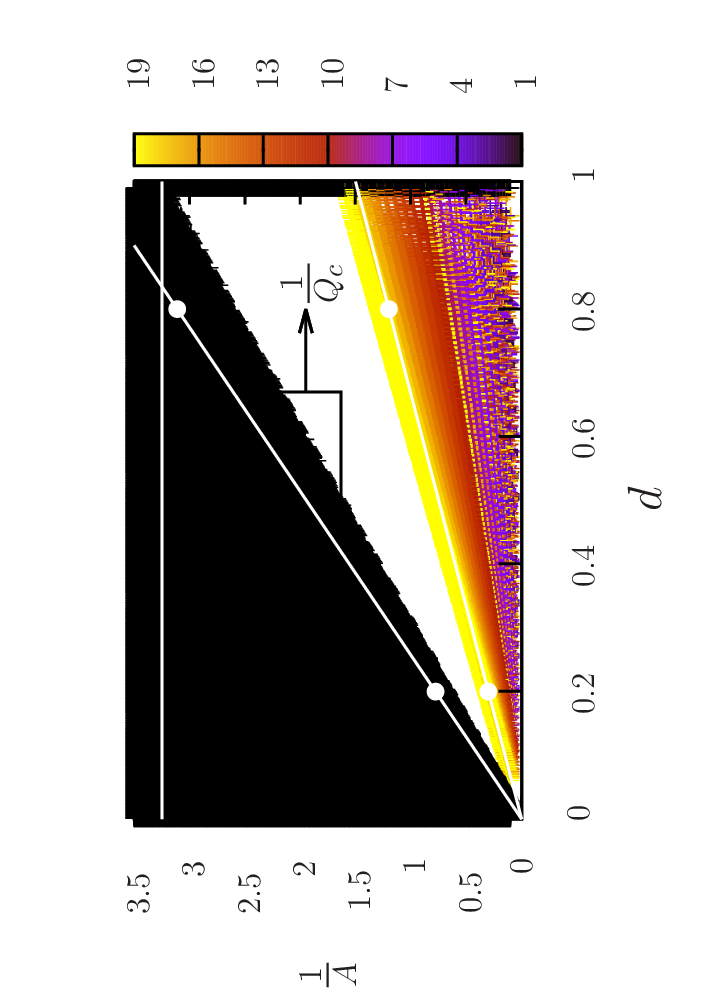}}
}
\put(0.5,1.1){
\subfigure[]{\includegraphics[angle=-90,width=0.5\textwidth]
{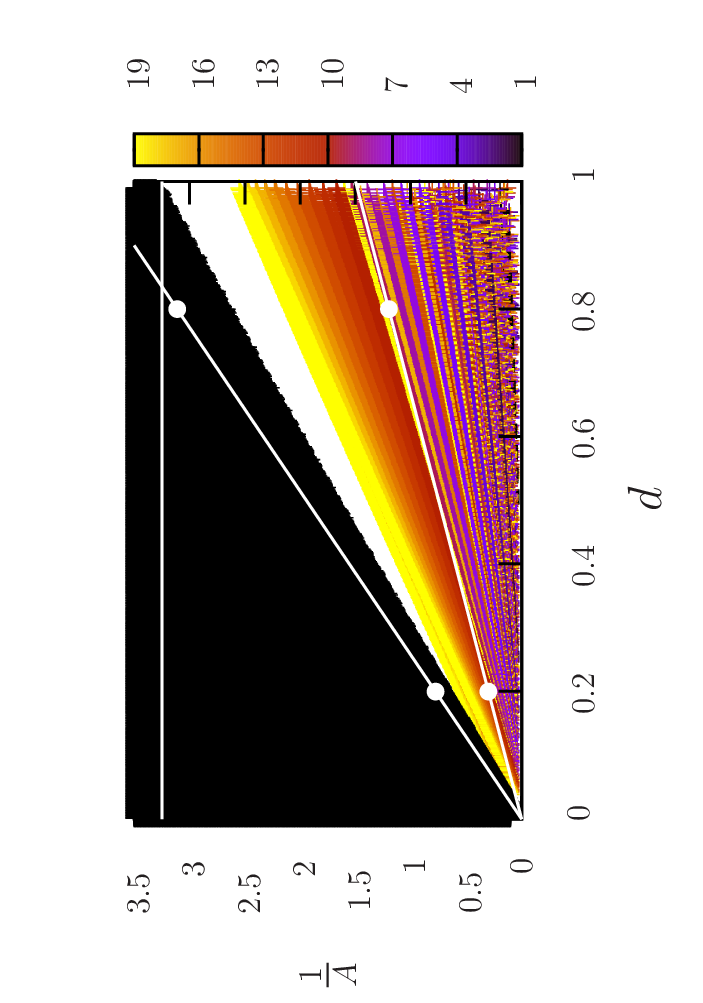}}
}
\put(0,0.75){
\subfigure[]{\includegraphics[angle=-90,width=0.5\textwidth]
{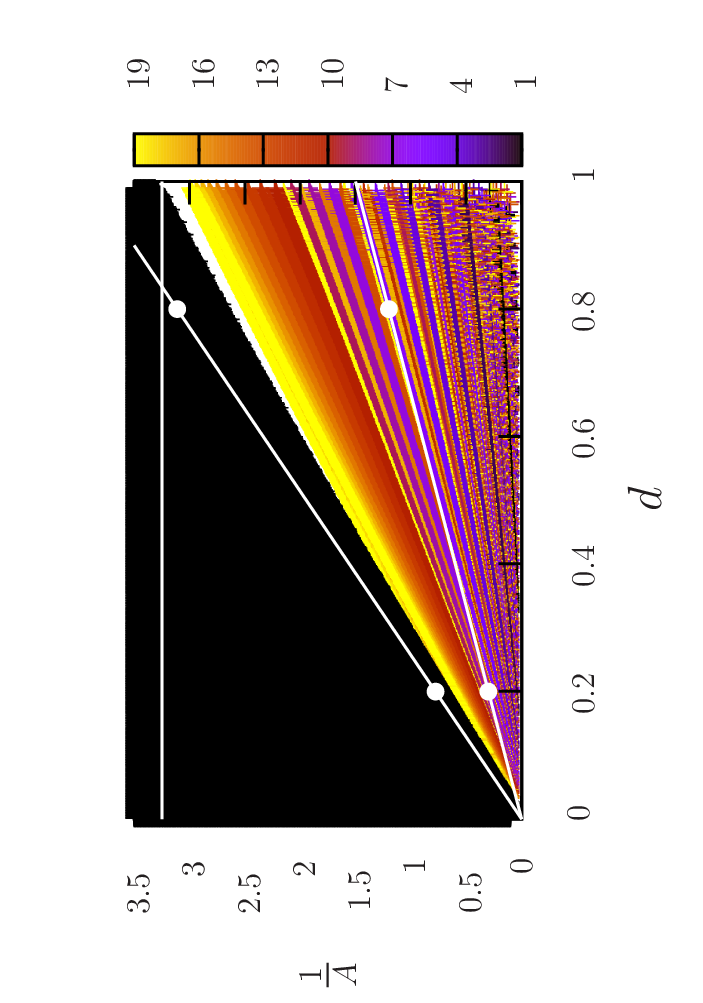}}
}
\put(0.5,0.75){
\subfigure[]{\includegraphics[angle=-90,width=0.5\textwidth]
{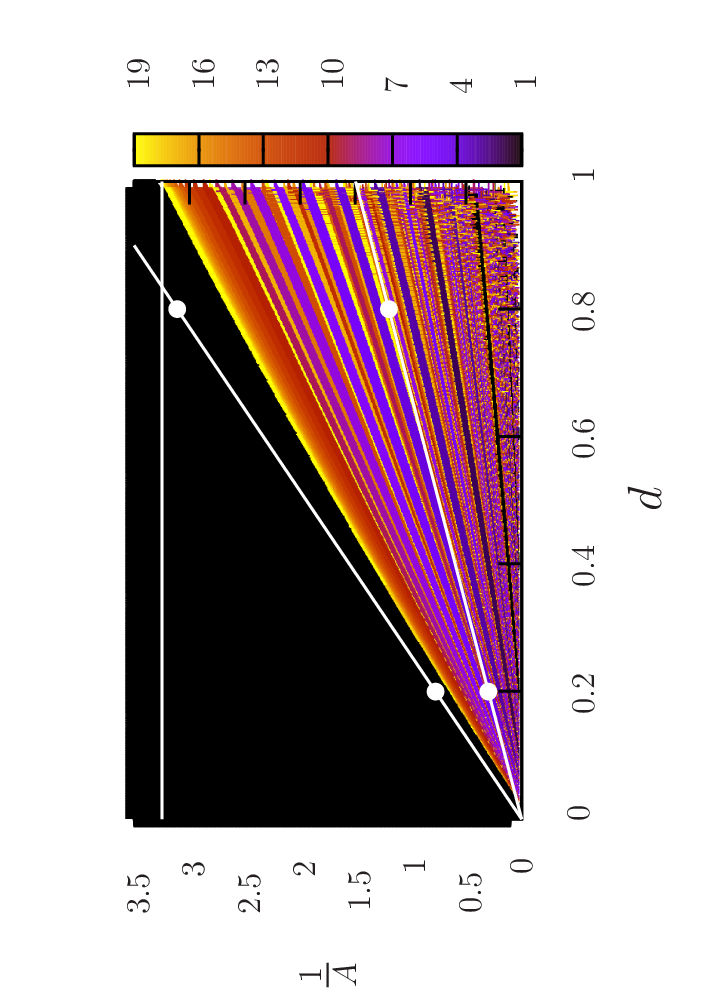}}
}
\put(0,0.4){
\subfigure[]{\includegraphics[angle=-90,width=0.5\textwidth]
{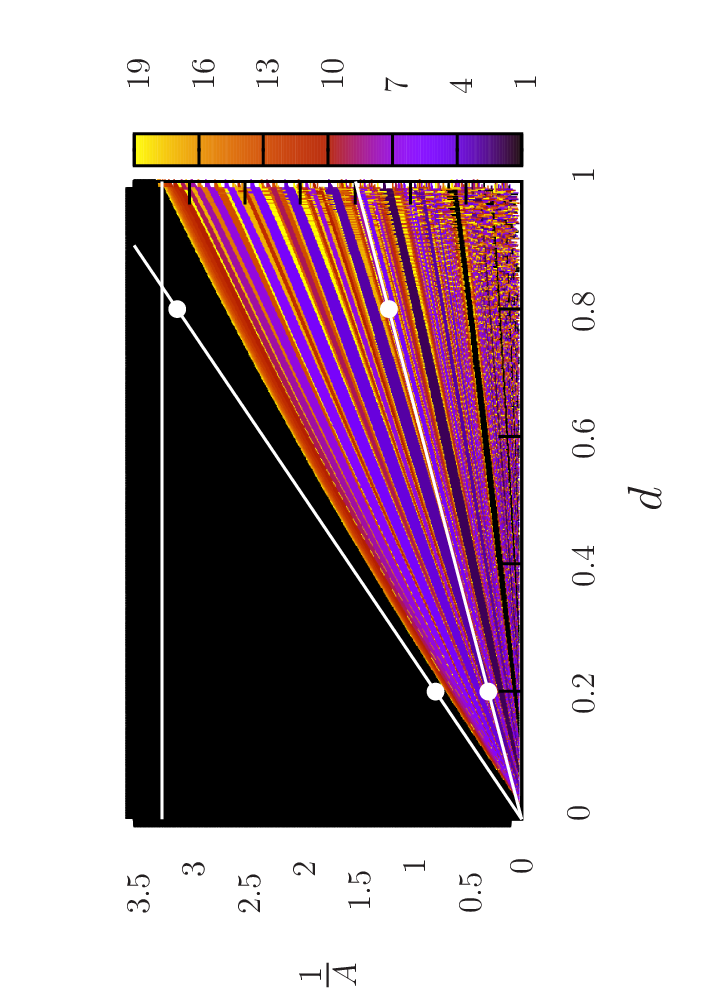}}
}
\put(0.5,0.4){
\subfigure[]{\includegraphics[angle=-90,width=0.5\textwidth]
{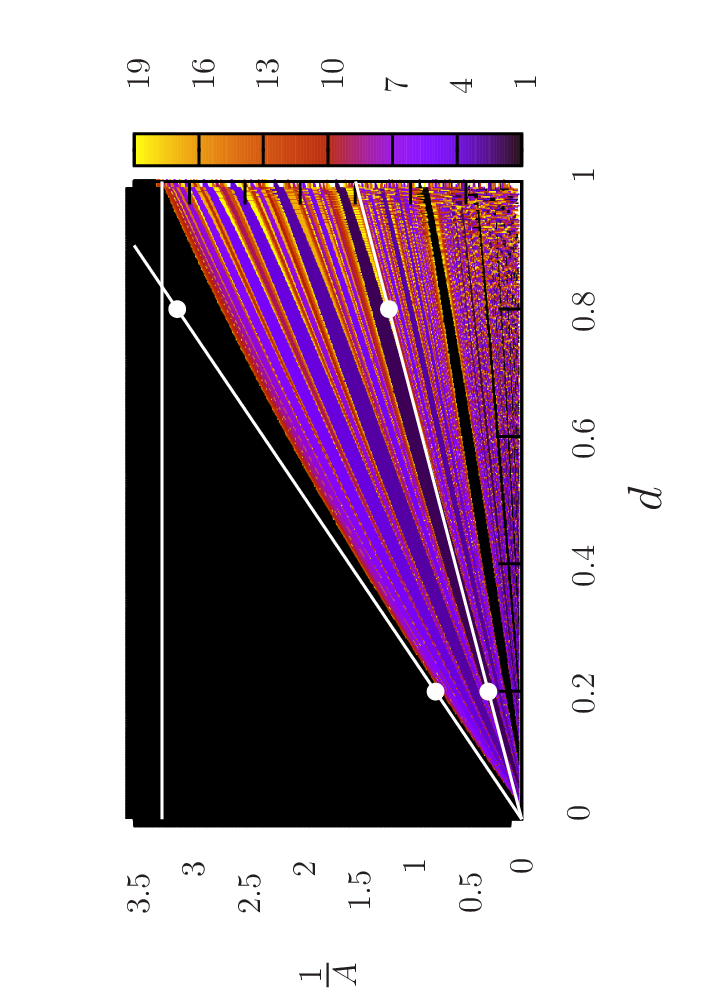}}
}
\end{picture}
\caption{Bifurcation scenarios for $T=0.1$ (a), $T=0.2$ (b), $T=0.3$ (c),
$T=0.5$ (d), $T=0.7$ (e) and $T=1$ (f), for $\theta=1$, $b=0.2$, $a=-0.5$. For
clarity reasons only periods lower than $20$ are shown; regions with higher
periods are filled in white. The horizontal line ($1/A=1/Q_c$) separates the
non-spiking and the spiking regions. The straight line with slope $1/Q_c$
labeled in (a) separates the permanent and conditional spiking
regions. The white lines are given by parameters leading to the
same dose: $Ad=\text{const.}$}
\label{fig:diff_Ts}
\end{figure}
\begin{figure}
\begin{picture}(1,1.1)
\put(0,1.1){
\subfigure[]{\includegraphics[angle=-90,width=0.5\textwidth]
{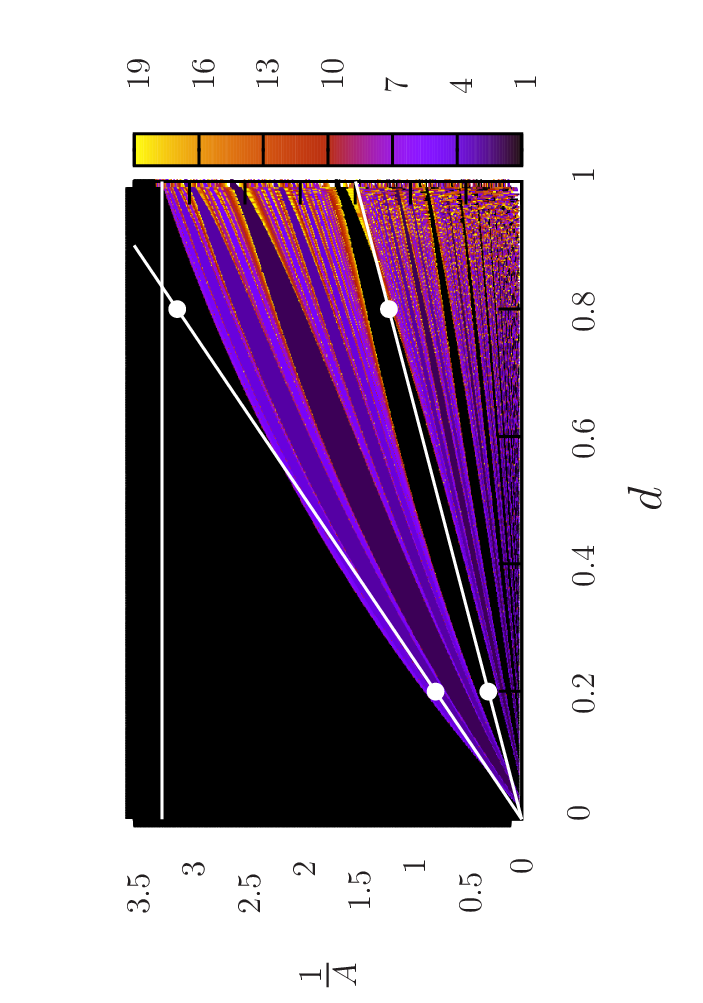}}
}
\put(0.5,1.1){
\subfigure[]{\includegraphics[angle=-90,width=0.5\textwidth]
{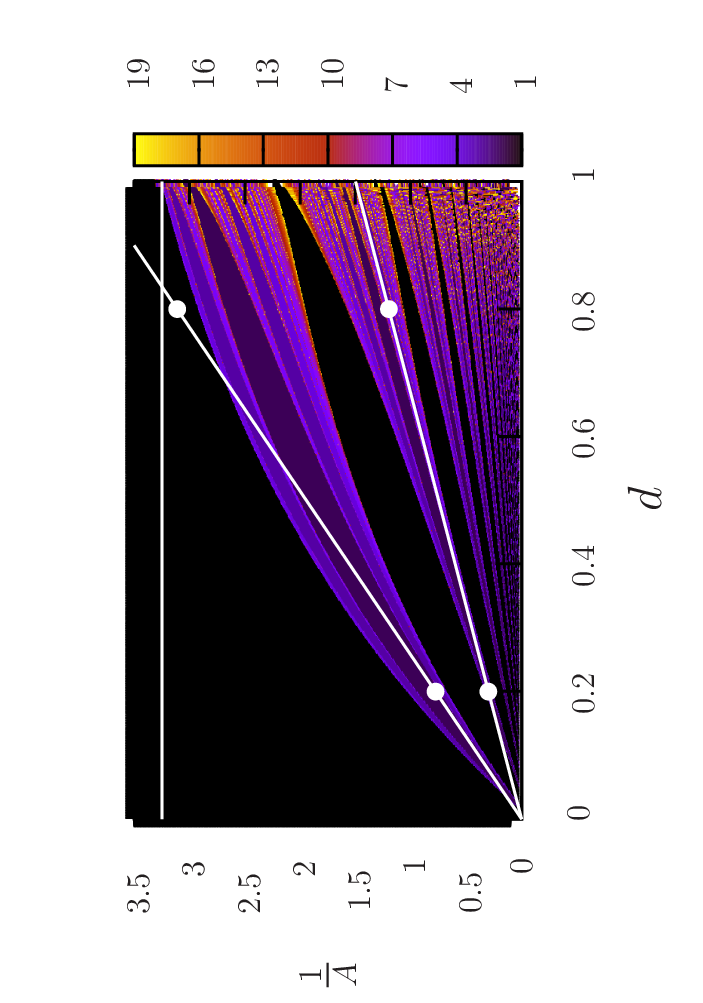}}
}
\put(0,0.75){
\subfigure[]{\includegraphics[angle=-90,width=0.5\textwidth]
{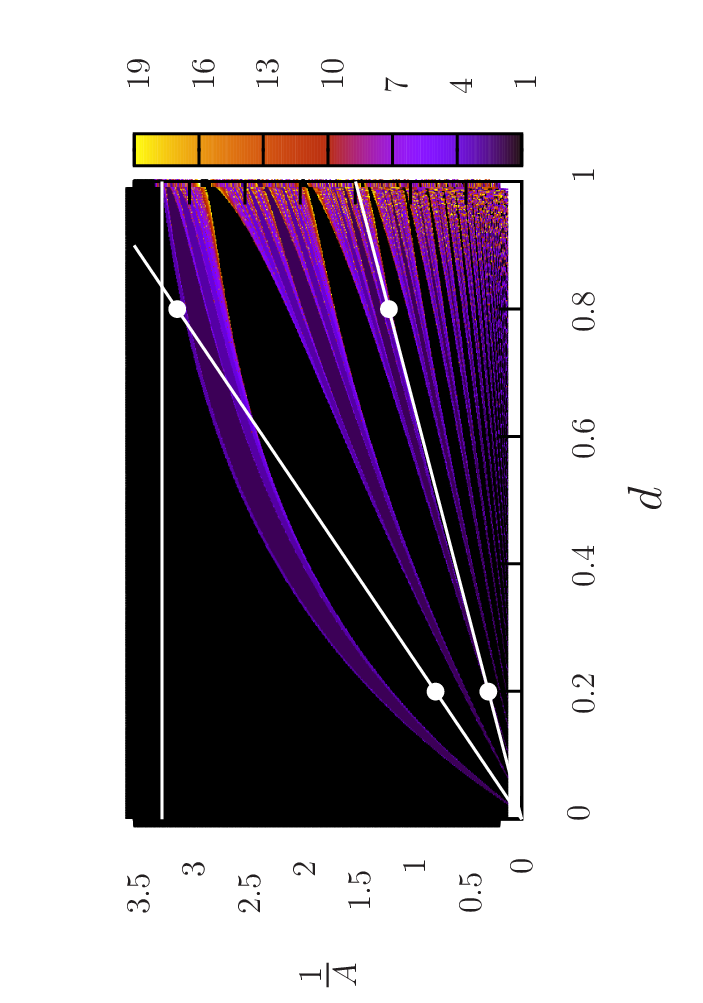}}
}
\put(0.5,0.75){
\subfigure[]{\includegraphics[angle=-90,width=0.5\textwidth]
{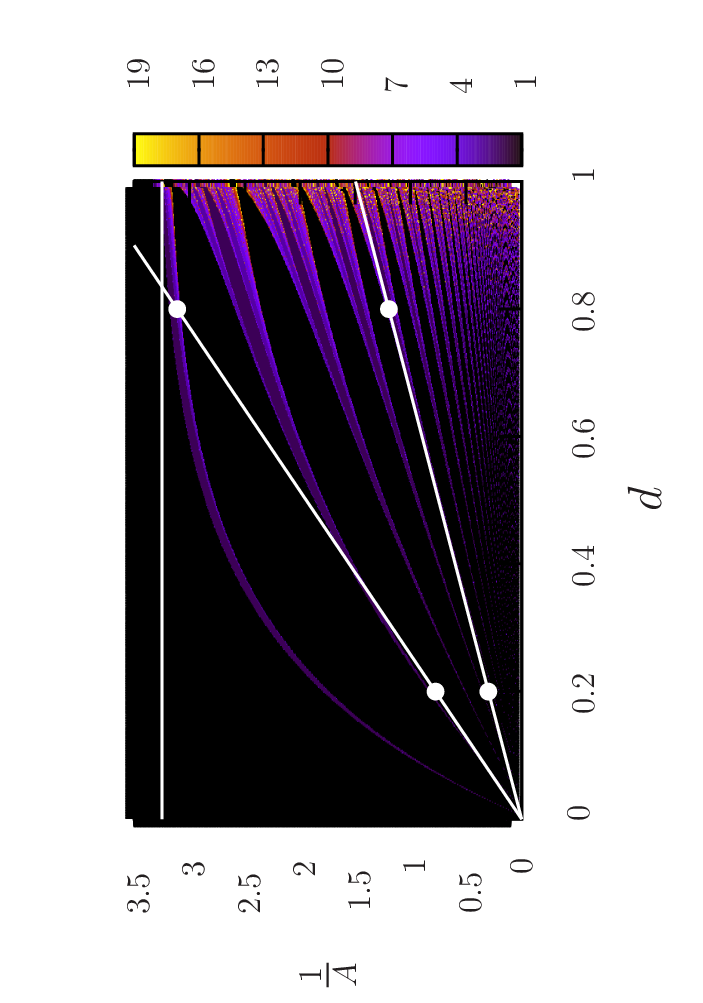}}
}
\put(0,0.4){
\subfigure[]{\includegraphics[angle=-90,width=0.5\textwidth]
{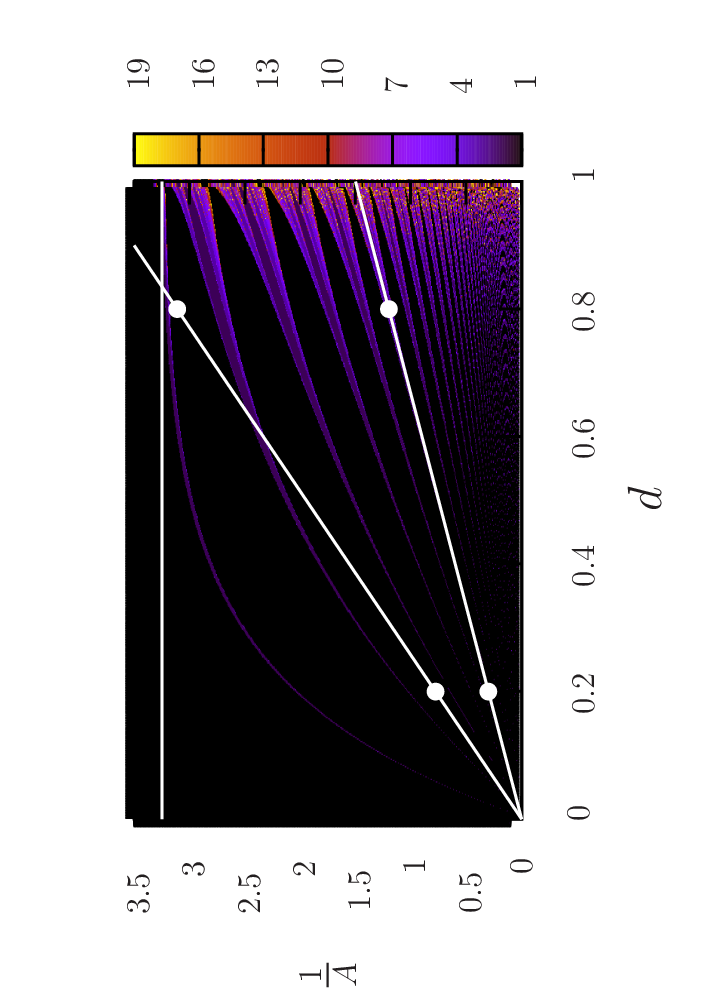}}
}
\put(0.5,0.4){
\subfigure[]{\includegraphics[angle=-90,width=0.5\textwidth]
{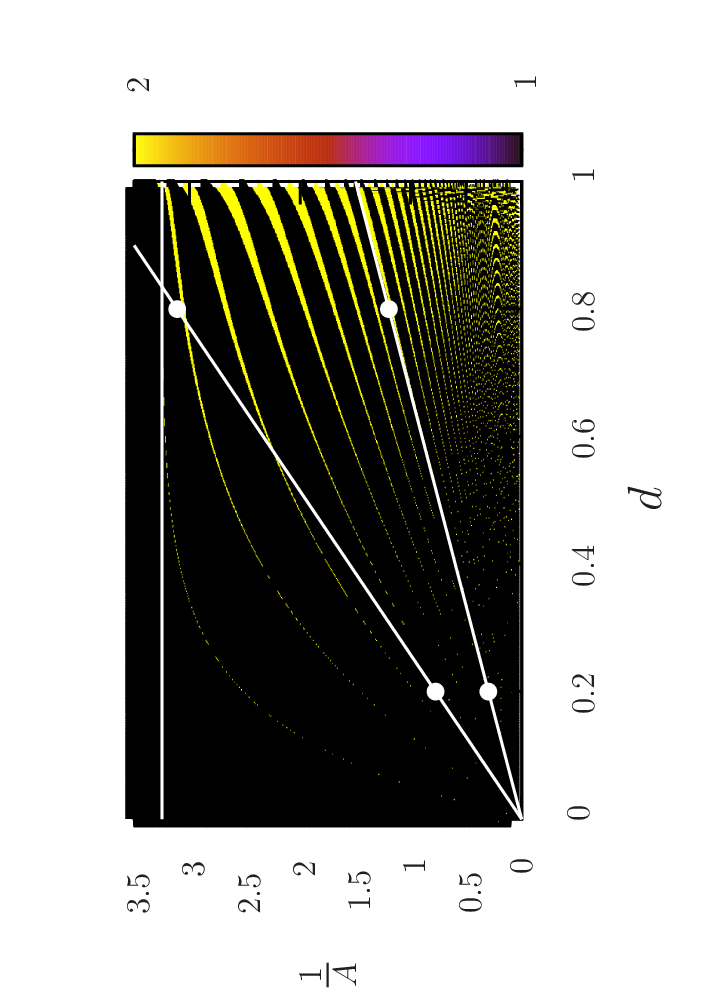}}
}
\end{picture}
\caption{Bifurcation scenarios for $T=2$ (a), $T=3$ (b), $T=5$ (c),
$T=8$ (d), $T=10$ (e) and $T=15$ (f). See caption in
Fig.~\ref{fig:diff_Ts} for more details. In (f) only periods up to
$2$ are shown in order to improve comparison between (e) and
(f).}
\label{fig:diff_largeTs}
\end{figure}

In Figures~\ref{fig:diff_Ts} and~\ref{fig:diff_largeTs} we show the
bifurcation scenario in the $d\times 1/A$ parameter space for
different values of $T$. As mentioned in
Remark~\ref{rem:contractiveness}, for some values of $T$ the
stroboscopic map $\s$ may lose contractiveness in the domain
$[\Sigma_n,\theta]$. When this occurs, the rotation number (and hence
the firing number and firing-rate) do not follow a devil's staircase
but a monotonically increasing continuous function.\\
As shown in~\Firstpaper{}, for the linear case the contracting
condition becomes
\begin{align*}
&F(A,T,d)=\left| [\Sigma_n,\theta] \right|-\left| \s\left(
[\Sigma_n,\theta]
\right) \right|=\theta-\Sigma_n-(\s(\theta)-\s(\Sigma_n))\\
&=\theta-\frac{b+A}{a}\left( e^{a\left( n\delta-dT
\right)}-e^{a\left( T-(n-1)\delta \right)}+e^{aT(1-d)}-1 \right).
\end{align*}
On one hand, one sees that $F$ is a monotonically decreasing function
of $A$. On the other hand, when the fixed point $\bx_{1}$ undergoes a
right border collision ($A=A_1^\R$) $\s$ is contractive in the whole
domain $[0,\theta]$. Therefore, if it exists, the region where the
rotation number is not a devil's staircase is bounded in the parameter
space $d\times1/A$ between the curves given by $A=A_0(d)$ and
$A=A_1^\R(d)$. Hence, below the curve given by $A=A_1^\R$ only devil's
staircases given by the period adding bifurcation structures can
exist. See~\Firstpaper{} for more details.

As predicted by Proposition~\ref{prop:Tsmall}, when $T\to0$ the first
bifurcation curve, $A_0(d)$, tends to be the straight line
$1/A=1/Q_cd$, and the rest of bifurcation curves accumulate at $1/A=0$
(see Figure~\ref{fig:diff_Ts}).\\
By contrast, when $T\to\infty$, all bifurcation curves accumulate at the
horizontal curve $1/A=1/Q_c$, as predicted by Proposition~\ref{prop:Tlarge}
(see Figure~\ref{fig:diff_largeTs}).

We are now interested in studying the firing-rate $r$~\eqref{eq:firing_rate}
under frequency variation. However, when varying the period of the
pulse~\eqref{eq:pulse}, we will restrict ourselves to pulses with constant
average (constant released dose or energy) $Q$ given in
equation~\eqref{eq:dose}.\\
Obviously, the output of the system will be sensitive to variations of the
injected energy (dose). Hence, in order to perform an analysis based exclusively
on frequency variation we will be interested in the variation of the frequency
of the stimulus while keeping the dose constant (dose conservation).

Note that points in the parameter space $d\times 1/A$ with a fixed dose are
located in the straight lines
\begin{equation*}
\frac{1}{A}=\frac{1}{Q}d.
\end{equation*}
In Figures~\ref{fig:diff_Ts} and~\ref{fig:diff_largeTs} we have highlighted
parameter values associated to two different doses. These are given by two
different white straight lines; the one with the larger slope ($Q<Q_c$) is fully
contained in the non-spiking region when $T$ small enough, while the other one
is contained in the spiking region for all values of $T$, and they will lead to
different qualitative responses.

Note that the dose conservation can be performed in three different ways in
order to keep the quantity $Q=Ad$ constant. In the first one one varies the
duration of the impulse $\Delta=dT$ as the period $T$ of the periodic input
$I(t)$ varies, while its amplitude $A$ is kept constant.  This is done by
keeping the duty cycle $d$ constant.\\
In the second one, the duration of the pulse is fixed, and one varies its
amplitude when $T$ is varied in order to keep constant the average of $I(t)$.\\
Of course, one can also simultaneously vary both magnitudes, giving rise to any
different types of parametrizations with respect to $T$ of the straight lines
corresponding to fixed dose.

In the next sections we separately study the first two cases.

\subsection{Fixed dose for constant impulse amplitude (width correction)}\label{sec:time_correction}
Taking into account that $1/A=d/Q$, for a fixed value of the amplitude
of the pulse it is enough to keep the duty cycle $d$ constant in order
to obtain an input with constant dose $Q$.  Hence, in this first
approach, we just fix one point in the parameter space $(d,1/A)$ and
vary $T$. This will allow us to directly apply the results shown in
\S~\ref{sec:freq_properties}.

In Figure~\ref{fig:freq_responseQ0d666} we focus on two points in the
parameter space located at the white straight line with lower slope in
Figs.~\ref{fig:diff_Ts} and \ref{fig:diff_largeTs} (higher dose,
$Q>Q_c$), and we show the firing-number, $\eta$, (left figures) and
the  firing-rate, $r$, (right figures) of the periodic orbits found
when varying $T$.\\
As announced in Corollary~\ref{cor:regions}, as $Q>Q_c$ these two points in the
parameter space are located in the permanent-spiking region and, hence, as
mentioned in Corollary~\ref{cor:eta_smallT}, the firing-number tends to
zero when $T\to0$. However, as predicted by Proposition~\ref{prop:fr_Tsmall},
the firing-rate fulfills
\begin{equation*}
\lim_{T\to0}r(T)=\frac{1}{\hdelta},
\end{equation*}
with $1/\hdelta=0.58$ for the used parameter values. As noted in
Remark~\ref{rem:r_depdends_on_Q}, this value only depends on $Q$ and hence it is
the same for all points with equal dose.

In Figure~\ref{fig:magni} we show a magnification of the firing
rate for small values of $T$, where one can clearly see the structure given by
the devil's staircase.

On the other hand, Proposition~\ref{prop:fr_Tlarge} provides the limiting value
for the firing-rate, 
\begin{equation*}
\lim_{T\to\infty}r(T)=\frac{d}{\delta},
\end{equation*}
where $\delta$ is given in~\eqref{eq:delta_linear}.
Note that this quantity depends on $A$ and, hence, it is different for the two
considered case although they correspond to inputs with the same average. For
$(d,1/A)=(0.2,0.3)$ (Figure~\ref{fig:freq_responseQ0d666} (b)) we get
$d/\delta=0.655$, and for $(d,1/A)=(0.8,1.2)$ we obtain $d/\delta=0.604$.

Finally, observe that the firing-rate possesses a global maximum and minimum at
 $T=T_1^\R$ and $T=T_1^\LL$, respectively, as $\bx$ is attracting enough.\\

\begin{figure} \begin{picture}(1,0.8)
\put(0,0.8){
\subfigure[]{\includegraphics[angle=-90,width=0.5\textwidth]
{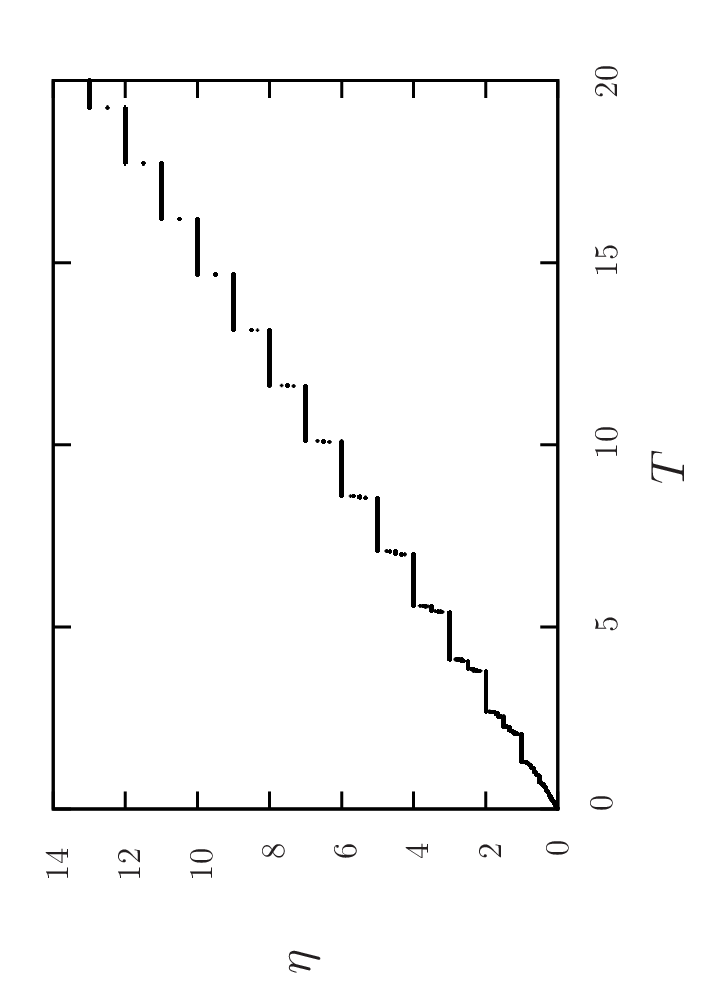}}
}
\put(0.5,0.8){
\subfigure[]{\includegraphics[angle=-90,width=0.5\textwidth]
{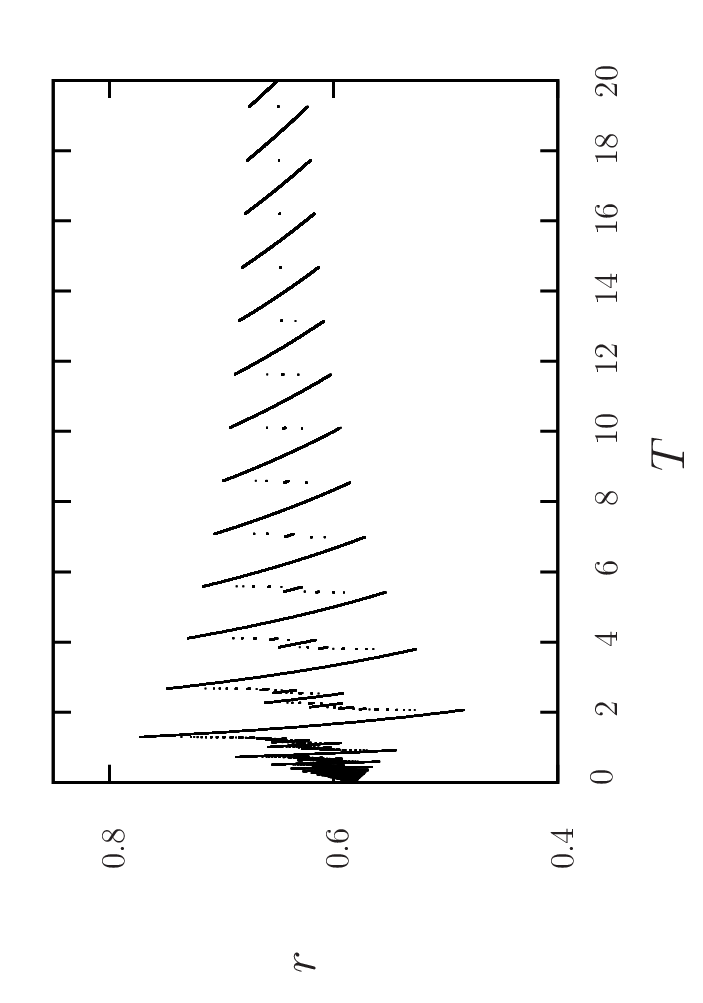}}
}
\put(0,0.4){
\subfigure[]{\includegraphics[angle=-90,width=0.5\textwidth]
{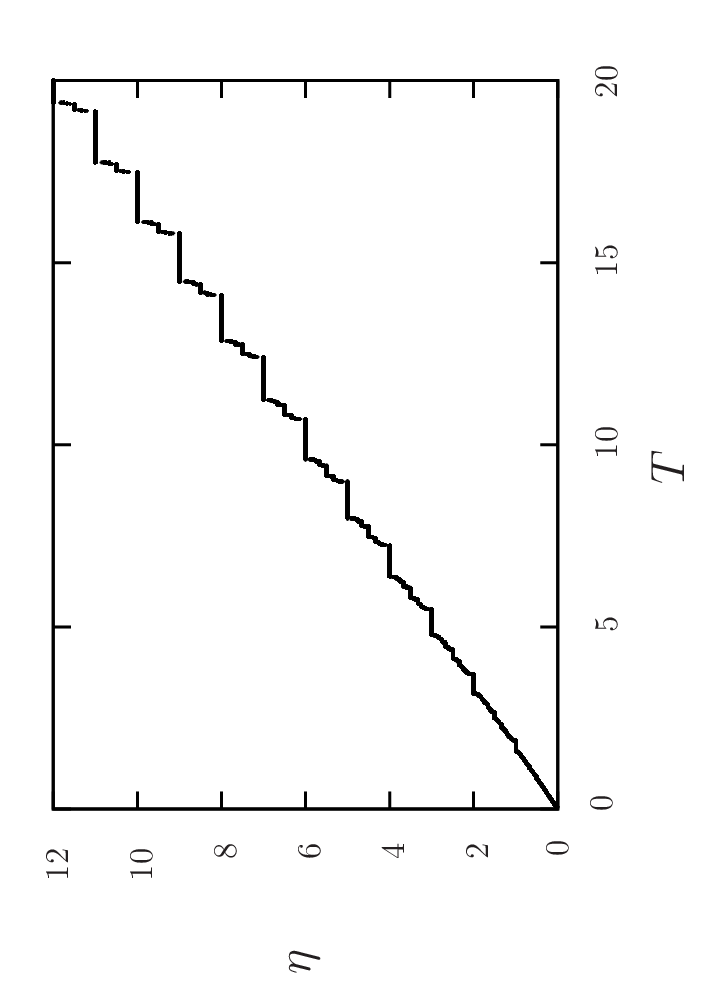}}
}
\put(0.5,0.4){
\subfigure[]{\includegraphics[angle=-90,width=0.5\textwidth]
{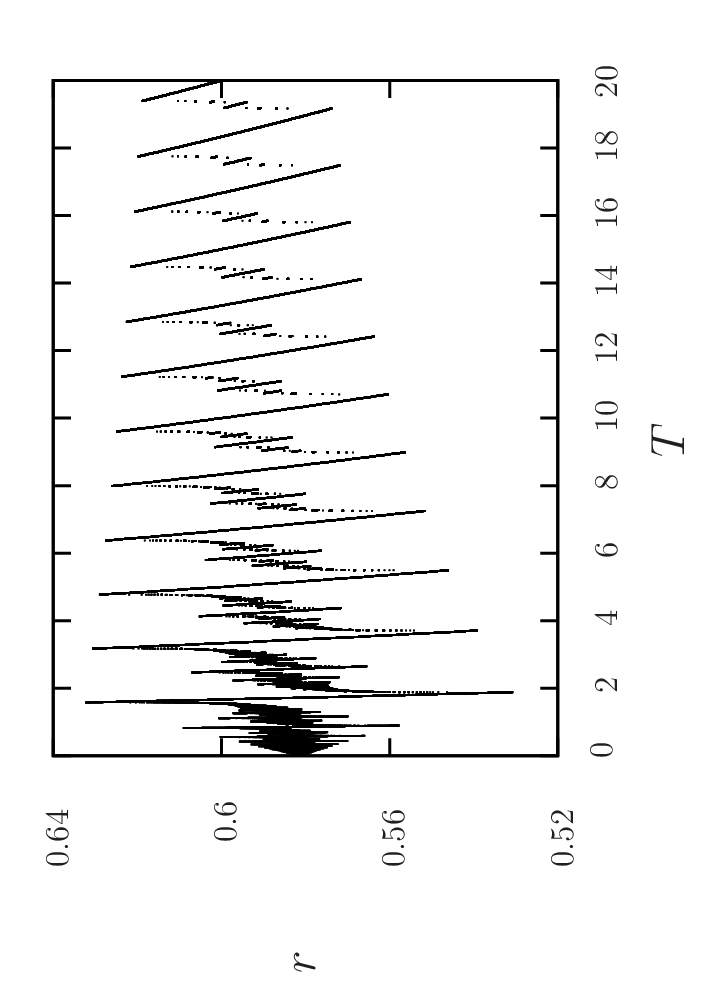}}
}
\end{picture}
\caption{Firing-number $\eta(T)$ (left) and firing-rate $r=\eta(T)/T$
(right), under variation of $T$ for $Q=0.666$. (a) and (b) $d=0.2$ and
$1/A=0.3$. (c) and (d) $d=0.8$ and $1/A=1.2$. The firing-rate follows a devil's
staircase with monotonically decreasing pieces exhibiting a maximum at
$T=T_1^\R$ and a minimum at $T=T_1^\LL$. Parameters $a$, $b$ and $\theta$ are
set as in Figure~\ref{fig:diff_Ts}.}
\label{fig:freq_responseQ0d666}
\end{figure}

\begin{figure}
\begin{center}
\begin{picture}(1,0.4)
\put(0,0.4){
\subfigure[]{\includegraphics[angle=-90,width=0.5\textwidth]
{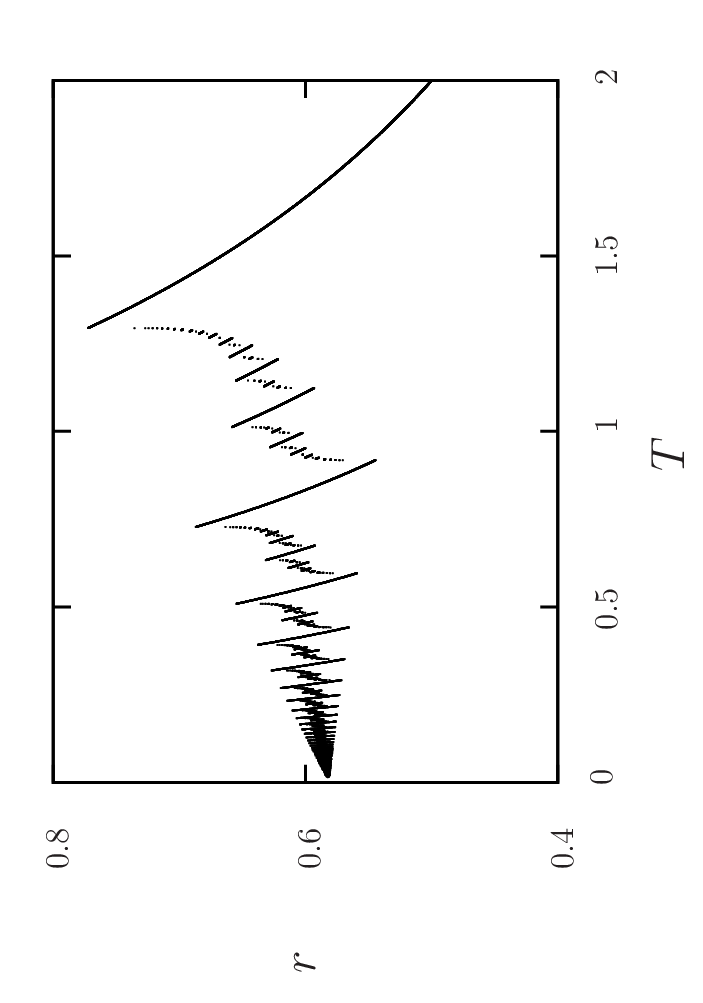}}
}
\put(0.5,0.4){
\subfigure[]{\includegraphics[angle=-90,width=0.5\textwidth]
{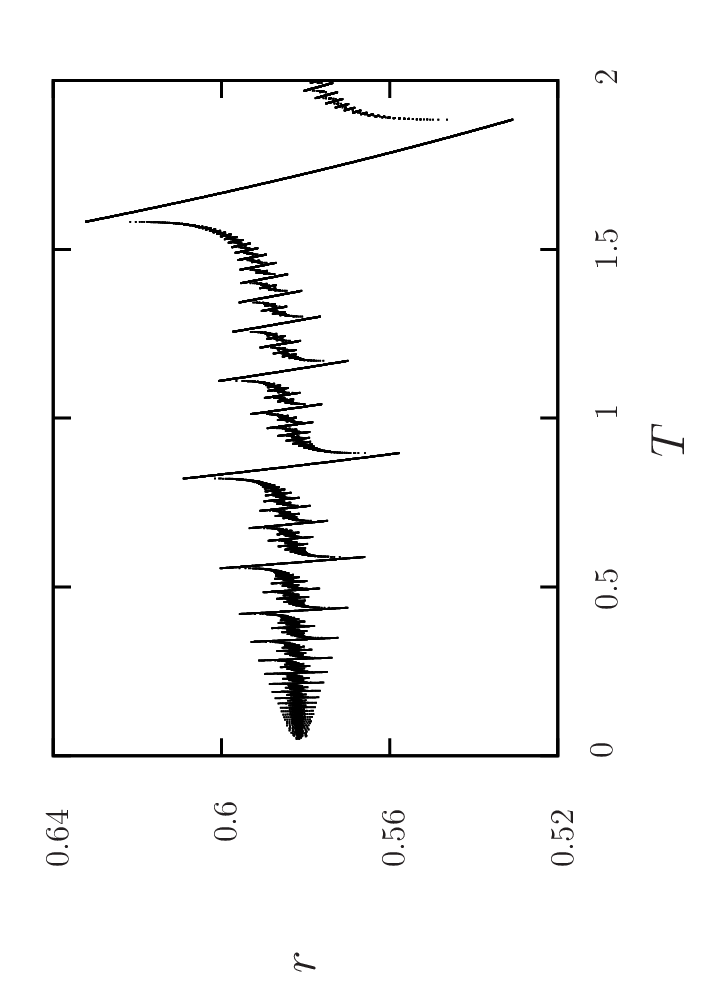}}
}
\end{picture}
\end{center}
\caption{Magnification of Figures~\ref{fig:freq_responseQ0d666} (b) and
(d).}\label{fig:magni}
\end{figure}

We now focus on two different inputs with average lower than the critical dose.
In Figure~\ref{fig:freq_responseQ0d257} we show the same results for the two
points labeled in Figures~\ref{fig:diff_Ts} and~\ref{fig:diff_largeTs} located
on the white straight line with higher slope (lower dose). For large values of
$T$ the firing-rate shows the same behavior as before with limiting values
$d/\delta=0.244$ (Figure~\ref{fig:freq_responseQ0d257} (b)) and $d/\delta=0.125$
(Figure~\ref{fig:freq_responseQ0d257} (d)). However, as predicted in
Corollary~\ref{cor:eta_smallT}, unlike in the previous case, as these two points
are now located in the conditional-spiking region, there exists some values of
$T$ below which the firing-rates vanish.

Note that, as in the previous case, the firing-rate exhibits a global maximum at
$T=T_1^\R$. However, the global minimum becomes now $0$ for all $0<T<T_0$, as
$(d,A)$ belongs to the conditional spiking region (see
Remark~\ref{rem:global_minimum}).

\begin{figure}
\begin{picture}(1,0.8)
\put(0,0.8){
\subfigure[]{\includegraphics[angle=-90,width=0.5\textwidth]
{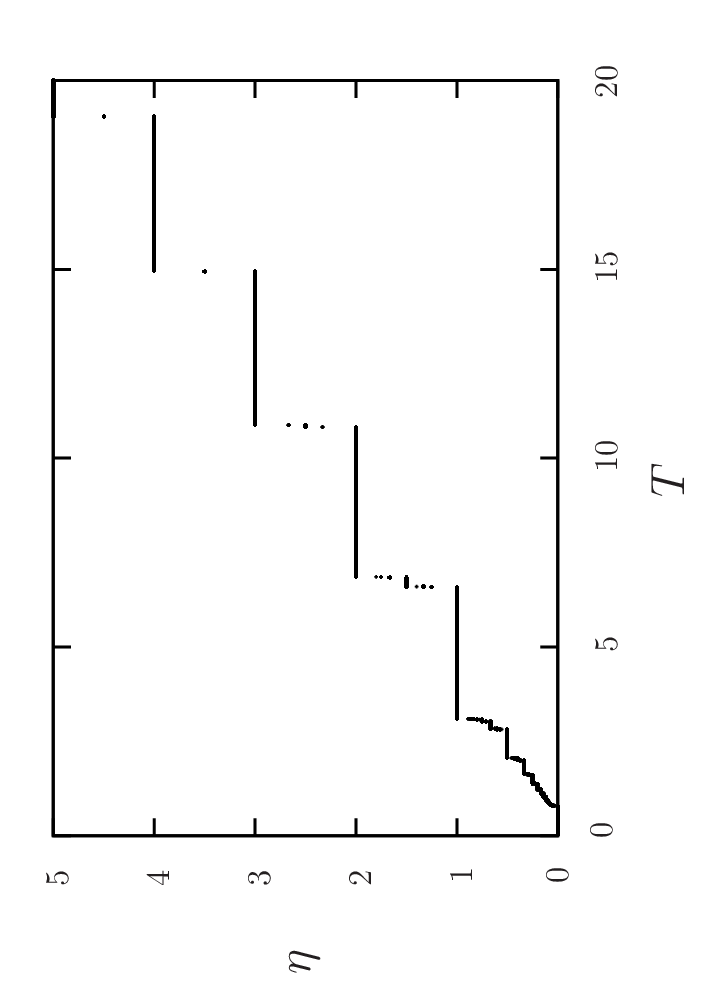}}
}
\put(0.5,0.8){
\subfigure[]{\includegraphics[angle=-90,width=0.5\textwidth]
{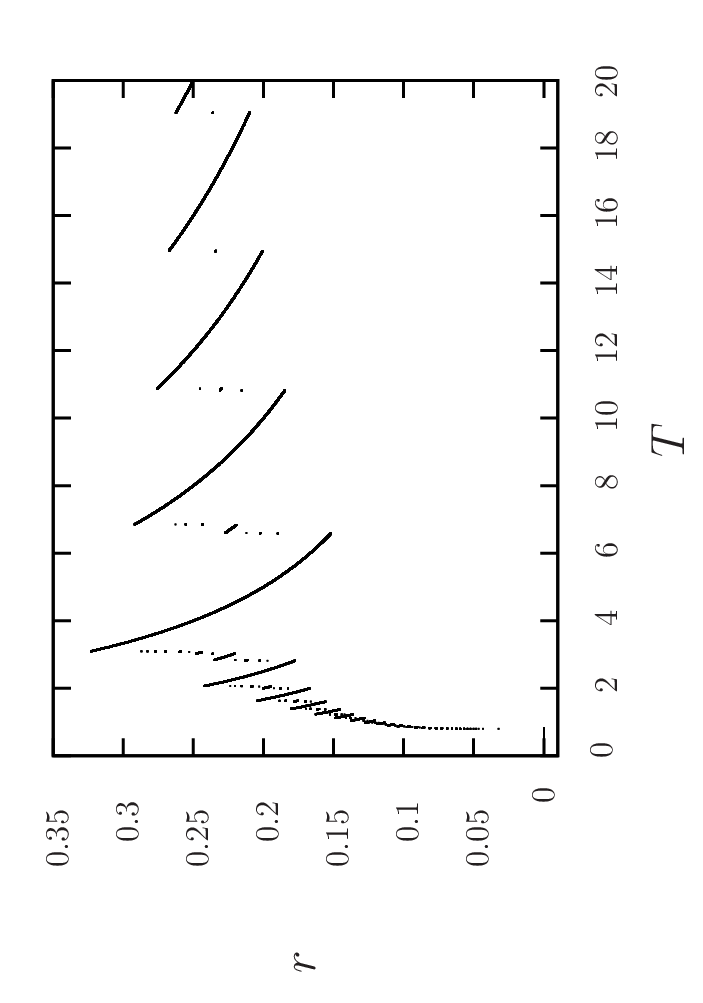}}
}
\put(0,0.4){
\subfigure[]{\includegraphics[angle=-90,width=0.5\textwidth]
{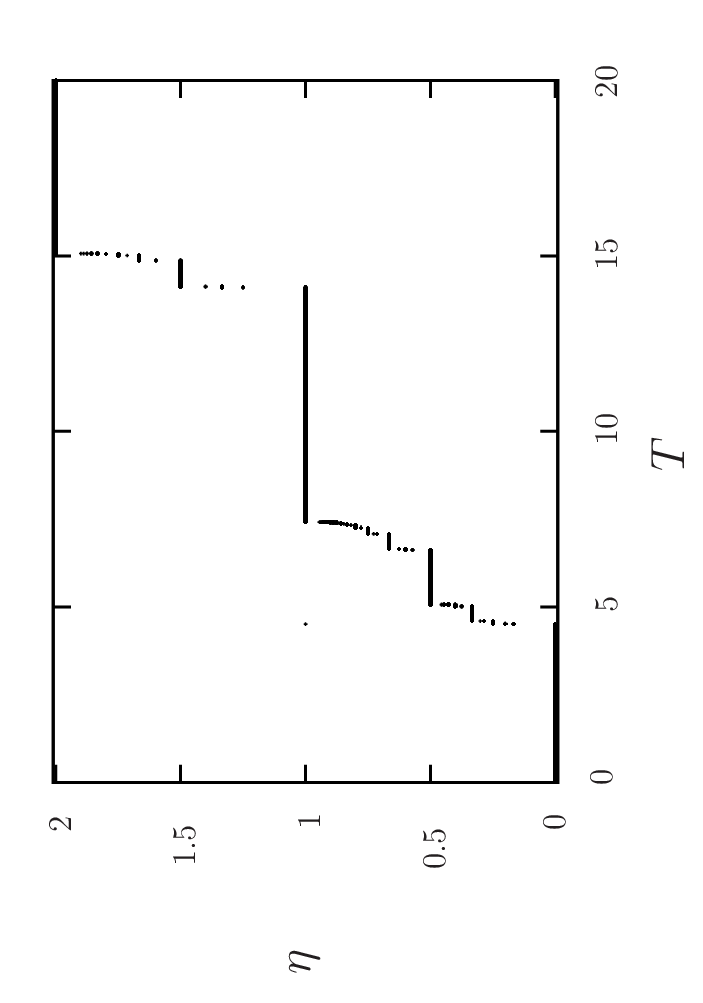}}
}
\put(0.5,0.4){
\subfigure[]{\includegraphics[angle=-90,width=0.5\textwidth]
{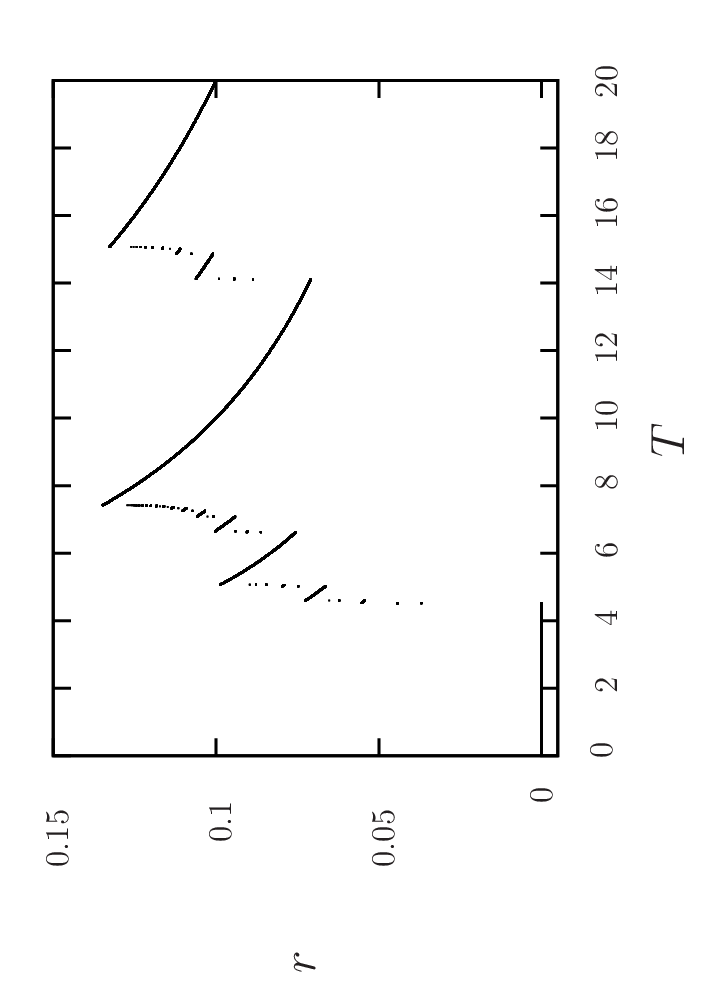}}
}
\end{picture}
\caption{Same as Fig.~\ref{fig:freq_responseQ0d666} for $Q=0.257$. (a) and (b):
$d=0.2$ and $1/A=0.777$. (c) and (d): $d=0.8$ $1/A=3.111$. As in the previous
case, a global maximum occurs at $T=T_1^\R$; however, the global minimum is
$0$ for a whole range of small periods.}
\label{fig:freq_responseQ0d257}
\end{figure}

\subsection{Fixed dose for fixed pulse duration (amplitude correction)}\label{sec:amplitude_correction}
We now fix the duration of the pulse $\Delta$ and perform the dose conservation
by properly modifying its amplitude. This is done by varying the parameters $d$
and $A$ along straight lines in the parameter space $d\times1/A$
parametrized by $T$,
\begin{equation}
\left(d,\frac{1}{A}\right)=\left(\frac{\Delta}{T},\frac{\Delta}{QT}\right).
\label{eq:straight_lines}
\end{equation}
Note that, with this approach, it is not possible to analyze the
properties of the output when $T\to0$, since its minimal value is
$T=\Delta$. Varying $T$ from $\Delta$ to $\infty$, one has to  vary
$(d,1/A)$ from $(1,1/Q)$ to $(0,0)$ along a straight line with slope
$1/Q$ in order to keep the released dose constant.

Regarding the behavior of $r(T)$ when $T\to\infty$, we can use
Proposition~\ref{prop:fr_Tlarge}. From~\eqref{eq:straight_lines} we get
$d=\Delta/T$ and $A=QT/\Delta$, which, when combined
with~\eqref{eq:delta_linear} and Proposition~\ref{prop:fr_Tlarge} gives us
\begin{equation*}
\lim_{T\to\infty}r(T)=\frac{Q}{\theta},
\end{equation*}
independently of $\Delta$.\\

In Figure~\ref{fig:freq_responseQ0d666_Delta3} we show the evolution of the
firing-rate for an input with average greater than the
critical dose, $Q_c$. Note that this leads to a broken devil's staircase, as it
starts at $T=\Delta$. The behavior at $T\to\infty$ is the expected one.

In Figure~\ref{fig:freq_responseQ0d257_Delta3} we show the same computation for
a $Q<Q_c$. In this case, if $T$ is close enough to $\Delta$,
equation~\eqref{eq:straight_lines} provides points located in the non-spiking
region for which $r(T)=\eta(T)=0$. Although the points provided
by~\eqref{eq:straight_lines} are never located in the permanent-spiking region,
they are in the conditional-spiking region if $T$ is large enough. Hence, one
starts observing spikes at some point.

\begin{figure}
\begin{picture}(1,0.4)
\put(0,0.4){
\subfigure[]{\includegraphics[angle=-90,width=0.5\textwidth]
{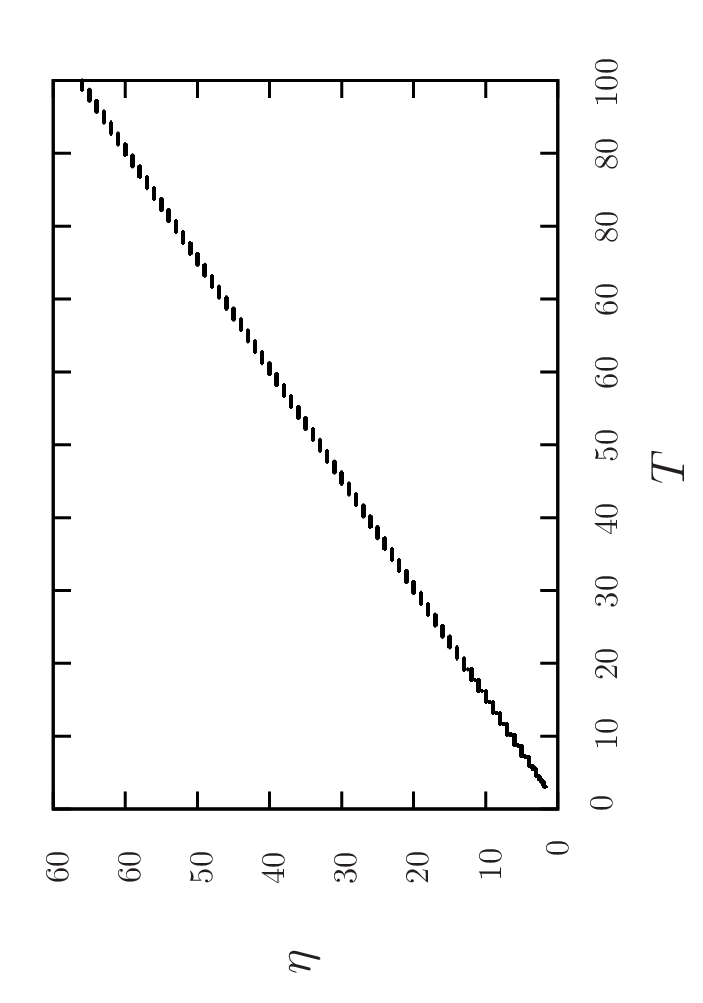}}
}
\put(0.5,0.4){
\subfigure[]{\includegraphics[angle=-90,width=0.5\textwidth]
{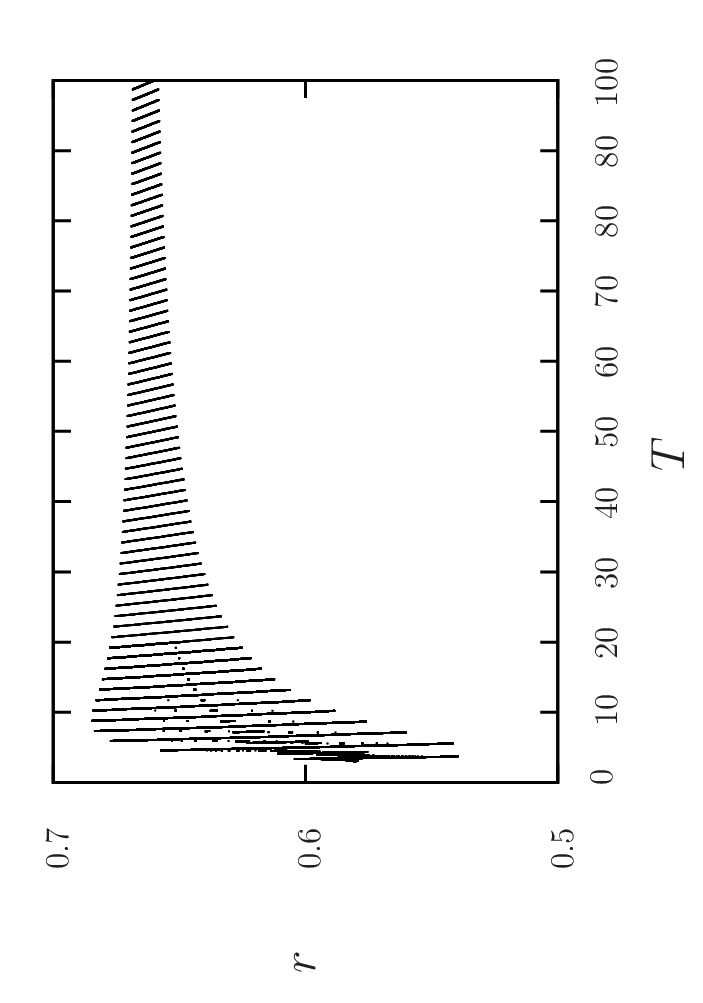}}
}
\end{picture}
\caption{Firing-number (a) and firing-rate (b) for $Q=0.666$ using amplitude
correction while keeping constant the duration of the pulse, $\Delta=3$.}
\label{fig:freq_responseQ0d666_Delta3}
\end{figure}
\begin{figure}
\begin{picture}(1,0.4)
\put(0,0.4){
\subfigure[]{\includegraphics[angle=-90,width=0.5\textwidth]
{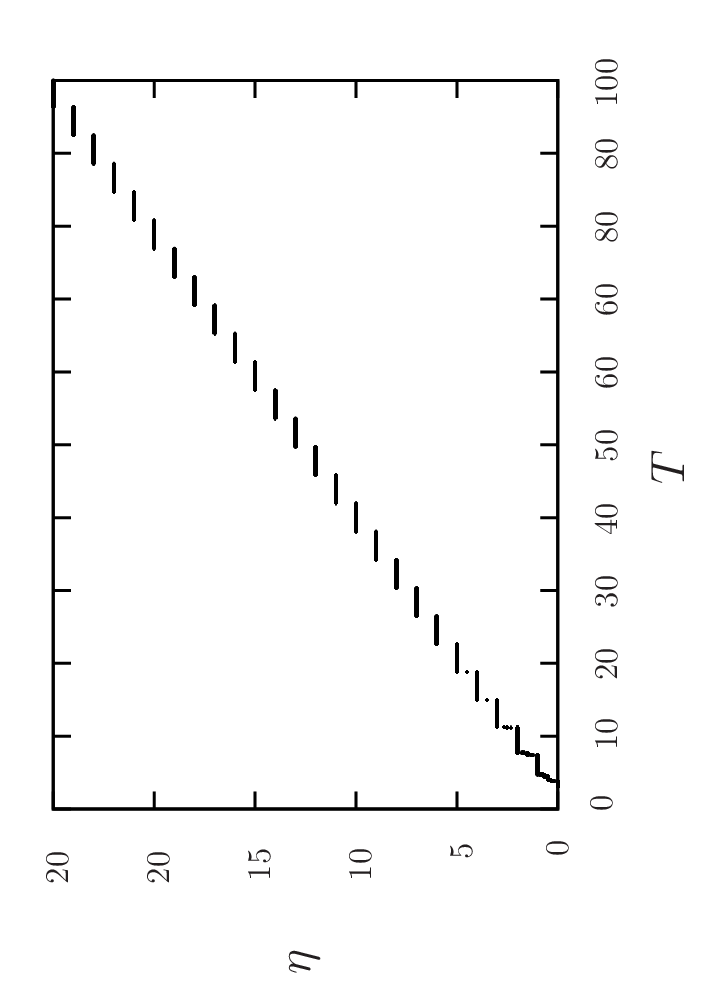}}
}
\put(0.5,0.4){
\subfigure[]{\includegraphics[angle=-90,width=0.5\textwidth]
{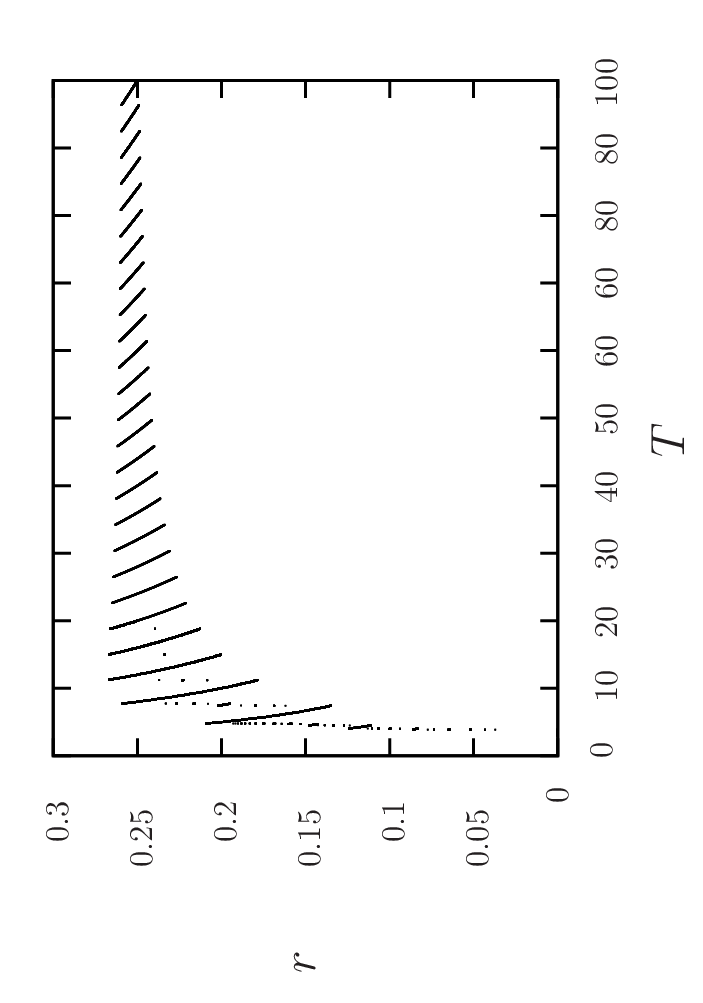}}
}
\end{picture}
\caption{Firing-number (a) and firing-rate (b) for $Q=0.257$ using amplitude
correction while keeping constant the duration of the pulse, $\Delta=3$.}
\label{fig:freq_responseQ0d257_Delta3}
\end{figure}

Note that, unlike when fixing the dose by width correction, one always gets non
zero spikes per period and non-zero firing-rates, at least for small enough
frequencies. This is because, when following the straight
lines~\eqref{eq:straight_lines} towards the origin one always enters the spiking
regions.

\section{Conclusions}
In this paper we have considered a generic spiking model
(integrate-and-fire-like system) with an attracting equilibrium point in the
subthreshold regime forced by means of a pulsatile (square wave) periodic input.
By contrast to the usual approach~\cite{KeeHopRin81,CooOweSmi01,TouBre08},
we consider the stroboscopic map instead of the Poincar\'e map onto the threshold.
This Poincar\'e map becomes a regular map there where it is defined while the
stroboscopic map is discontinuous. However, as shown in~\Firstpaper{}, this
becomes indeed an advantage, as this type of maps are well understood map by the
piecewise-smooth community (see~\cite{AlsGamGraKru14} for a recent survey).\\
As shown in~\cite{GraKruCle13} the system exhibits spiking dynamics organized in
rich bifurcation structures in the parameter space formed by the amplitude and
duty cycle of the forcing pulse. These bifurcations and the associated
symbolic dynamics completely explain relevant features of this type of
excitable systems, like the firing-rate. In this work, we have studied how these
bifurcation structures, and dynamical properties associated with them,
vary when the period of the forcing is varied while keeping the injected dose
(input average) constant.  We have given special interest to the asymptotic
firing-rate (average number of spikes per unit time), which turns out to follow
a devil's staircase (a fractal structure) with monotonically decreasing steps.
In particular, we have precisely characterized its global maximum in the whole
frequency domain as well each local maxima.
%
If we consider specific ranges of frequency whose bounds correspond to
the frequencies eliciting the subsequent local minima, the response
can be decomposed in a repetitive structure with a non-monotonic,
bell-shaped pattern and global maximum.

\begin{thebibliography}{DHO{\etalchar{+}}89}

\bibitem[AGGK]{AlsGamGraKru14}
Ll. Alsed\`a, J.M. Gambaudo, A.~Granados, and M.~Krupa.
\newblock Period adding and incrementing in one-dimensional discontinuous maps:
  theory and applications.
\newblock In preparation.

\bibitem[BM61]{BogMit61}
N.N. Bogoliubov and Y.A. Mitropolski.
\newblock {\em Asymptotic methods in the theory of non-linear oscillations}.
\newblock Gordon and Breach, 1961.

\bibitem[CB99]{CooBre99}
S.~Coombes and P.C. Bressloff.
\newblock Mode locking and {A}rnold tongues in integrate-and-fire neural
  oscillators.
\newblock {\em Phys. Rev E.}, 60:2086--2096, 1999.

\bibitem[CO00]{CooOsb00}
S.~Coombes and A.~H. Osbaldestin.
\newblock {P}eriod-adding bifurcations and chaos in a periodically stimulated
  excitable neural relaxation oscillator.
\newblock {\em Phys. Rev.~E}, 62:4057--4066, 2000.

\bibitem[Coo01]{Coo01}
S.~Coombes.
\newblock Phase-locking in networks of pulse-coupled mckean relaxation
  oscillators.
\newblock {\em Physica D}, 2820:1--16, 2001.

\bibitem[COS01]{CooOweSmi01}
S.~Coombes, M.~Owen, and G.D. Smith.
\newblock Mode locking in a periodically forced integrate-and-fire-or-burst
  neuron model.
\newblock {\em Phys. Rev E.}, 64:041914, 2001.

\bibitem[CTW12]{CooThuWed12}
S.~Coombes, R.~Thul, and K.C.A Wedgwood.
\newblock Nonsmooth dynamics in spiking neuron models.
\newblock {\em Physica D}, 241:2042--2057, 2012.

\bibitem[DHO{\etalchar{+}}89]{dalkin_89}
A.C. Dalkin, D.J. Haisenleder, G.A. Ortolano, T.R. Ellis, and J.C. Marshall.
\newblock The frequency of gonadotropin-releasing-hormone stimulation
  differentially regulates gonadotropin subunit messenger ribonucleic acid
  expression.
\newblock {\em Endocrinology}, 125:917--924, 1989.

\bibitem[FG11]{FreGal11B}
J.G. Freire and J.A.C. Gallas.
\newblock Stern-brocot trees in cascades of mixed-mode oscillations and canards
  in the extended bonhoeffer-van der pol and the fitzhugh-nagumo models of
  excitable systems.
\newblock {\em Phys. Lett. A}, 375:1097--1103, 2011.

\bibitem[GGT84]{GamGleTre84}
J.M. Gambaudo, {P}. Glendinning, and {C}. {T}resser.
\newblock Collage de cycles et suites de {F}arey.
\newblock {\em C. R. Acad. Sc. Paris, s\'erie I}, 299:711--714, 1984.

\bibitem[GIT84]{GamLanTre84}
J.M. Gambaudo, O.Lanford III, and C.~Tresser.
\newblock Dynamique symbolique des rotations.
\newblock {\em C. R. Acad. Sc. Paris, s\'erie I}, 299:823--826, 1984.

\bibitem[GKC13]{GraKruCle13}
A.~Granados, M.~Krupa, and F.~Cl\'ement.
\newblock Border collision bifurcations of stroboscopic maps in periodically
  driven spiking models.
\newblock Preprint available at \url{http://arxiv.org/abs/1310.1054}, 2013.

\bibitem[JMB{\etalchar{+}}13]{JimMihBroNieRub13}
N.D Jimenez, S.~Mihalas, R.~Brown, E.~Niebur, and J.~Rubin.
\newblock Locally contractive dynamics in generalized integrate-and-fire
  neurons.
\newblock {\em SIAM J. Appl. Dyn. Syst. (SIADS)}, 12:1474--1514, 2013.

\bibitem[KHR81]{KeeHopRin81}
J.P. Keener, F.C Hoppensteadt, and J.~Rinzel.
\newblock Integrate-and-fire models of nerve membrane response to oscillatory
  input.
\newblock {\em SIAM J. Appl. Dyn. Syst. (SIADS)}, 41:503--517, 1981.

\bibitem[KJSC97]{KaiJakSteChi97}
U.B. Kaiser, A.~Jakubowiak, A.~Steinberger, and W.W. Chin.
\newblock Differential effects of gonadotropin-releasing hormone ({GnRH}) pulse
  frequency on gonadotropin subunit and {GnRH} receptor messenger ribonucleic
  acid levels in vitro.
\newblock {\em Endocrinology}, 138:1224--1231, 1997.

\bibitem[LC05]{LaiCoo05}
C.R. Laing and S.~Coombes.
\newblock Mode locking in a periodically forced ``ghostbursting'' neuron model.
\newblock {\em Int. J. Bif. Chaos}, 15:1433, 2005.

\bibitem[MHR12]{MenHugRin12}
X.~Meng, G.~Huguet, and J.~Rinzel.
\newblock Type {III} excitability, slope sensitivity and coincidence detection.
\newblock {\em Disc. Cont. Dyn. Syst.}, 32:2720--2757, 2012.

\bibitem[TB08]{TouBre08}
J.~Touboul and R.~Brette.
\newblock Dynamics and bifurcations of the adaptive exponential
  integrate-and-fire model.
\newblock {\em Biol. Cybernet}, 99:319--334, 2008.

\bibitem[TB09]{TouBre09}
J.~Touboul and R.~Brette.
\newblock Spiking dynamics of bidimensional integrate-and-fire neurons.
\newblock {\em SIAM J. Appl. Dyn. Syst. (SIADS)}, 4:1462--1506, 2009.

\end{thebibliography}

\begin{thebibliography}{KJSC97}

\bibitem[BK03]{BedKai03}
G.Y. B\'ed\'ecarrats and U.B. Kaiser.
\newblock Differential regulation of gonadotropin subunit gene promoter
  activity by pulsatile gonadotropin-releasing hormone ({GnRH}) in perifused
  ${L\beta T2}$ cells: Role of {GnRH} receptor concentration.
\newblock {\em Endocrinology}, 144:1802--1911, 2003.

\newblock {\em Endocrinology}, 144:1802--1911, 2003.

\bibitem[CB99]{CooBre99}
S.~Coombes and P.C. Bressloff.
\newblock Mode locking and arnold tongues in integrate-and-fire neural
  oscillators.
\newblock {\em Phys. Rev E.}, 60:2086--2096, 1999.

\bibitem[CO00]{CooOsb00}
S.~Coombes and A.~H. Osbaldestin.
\newblock {P}eriod-adding bifurcations and chaos in a periodically stimulated
  excitable neural relaxation oscillator.
\newblock {\em Phys. Rev.~E}, 62:4057--4066, 2000.

\bibitem[Coo01]{Coo01}
S.~Coombes.
\newblock Phase-locking in networks of pulse-coupled mckean relaxation
  oscillators.
\newblock {\em Physica D}, 2820:1--16, 2001.

\bibitem[Coo08]{Coo08}
S.~Coombes.
\newblock Neuronal networks with gap junctions: A study of piece-wise linear
  planar neuron models.
\newblock {\em SIAM J. Appl. Dyn. Syst. (SIADS)}, 7:1101--1129, 2008.

\bibitem[COS01]{CooOweSmi01}
S.~Coombes, M.~Owen, and G.D. Smith.
\newblock Mode locking in a periodically forced integrate-and-fire-or-burst
  neuron model.
\newblock {\em Phys. Rev E.}, 64:041914, 2001.

\bibitem[CTW12]{CooThuWed12}
S.~Coombes, R.~Thul, and K.C.A Wedgwood.
\newblock Nonsmooth dynamics in spiking neuron models.
\newblock {\em Physica D}, 241:2042--2057, 2012.

\bibitem[FG11]{FreGal11B}
J.G. Freire and J.A.C. Gallas.
\newblock Stern-brocot trees in cascades of mixed-mode oscillations and canards
  in the extended bonhoeffer-van der pol and the fitzhugh-nagumo models of
  excitable systems.
\newblock {\em Phys. Lett. A}, 375:1097--1103, 2011.

\bibitem[GGT84]{GamGleTre84}
J.~M. Gambaudo, {P}. Glendinning, and {C}. {T}resser.
\newblock Collage de cycles et suites de {F}arey.
\newblock {\em C. R. Acad. Sc. Paris, s\'erie I}, 299:711--714, 1984.

\bibitem[GH83]{GucHol83}
J.~Guckenheimer and P.~J. Holmes.
\newblock {\em Nonlinear {O}scillations, {D}ynamical {Systems} and
  {B}ifurcations of {V}ector {F}ields}.
\newblock Appl. Math. Sci. Springer, 4th edition, 1983.

\bibitem[GIT84]{GamLanTre84}
J.M. Gambaudo, O.Lanford III, and C.~Tresser.
\newblock Dynamique symbolique des rotations.
\newblock {\em C. R. Acad. Sc. Paris, s\'erie I}, 299:823--826, 1984.

\bibitem[GK13]{GraKruCle13}
A.~Granados, and M.~Krupa, and F.~Cl\'ement.
\newblock Period adding in integrate and fire models.
\newblock In preparation, 2013.

\bibitem[KHR81]{KeeHopRin81}
J.P. Keener, F.C Hoppensteadt, and J.~Rinzel.
\newblock Integrate-and-fire models of nerve membrane response to oscillatory
  input.
\newblock {\em SIAM J. Appl. Dyn. Syst. (SIADS)}, 41:503--517, 1981.

\bibitem[KJSC97]{KaiJakSteChi97}
U.B. Kaiser, A.~Jakubowiak, A.~Steinberger, and W.W. Chin.
\newblock Differential effects of gonadotropin-releasing hormone ({GnRH}) pulse
  frequency on gonadotropin subunit and {GnRH} receptor messenger ribonucleic
  acid levels in vitro.
\newblock {\em Endocrinology}, 138:1224--1231, 1997.

\bibitem[LC05]{LaiCoo05}
C.R. Laing and S.~Coombes.
\newblock Mode locking in a periodically forced ``ghostbursting'' neuron model.
\newblock {\em Int. J. Bif. Chaos}, 15:1433, 2005.

\bibitem[TB08]{TouBre08}
J.~Touboul and R.~Brette.
\newblock Dynamics and bifurcations of the adaptive exponential
  integrate-and-fire model.
\newblock {\em Biol. Cybernet}, 99:319--334, 2008.

\bibitem[TB09]{TouBre09}
J.~Touboul and R.~Brette.
\newblock Spiking dynamics of bidimensional integrate-and-fire neurons.
\newblock {\em SIAM J. Appl. Dyn. Syst. (SIADS)}, 4:1462--1506, 2009.

\bibitem[Tou08]{Tou08}
J.~Touboul.
\newblock Bifurcation analysis of a general class of nonlinear integrate-and-
  fire neurons.
\newblock {\em SIAM J. Appl. Meth.}, 68:1045--1079, 2008.
\end{thebibliography}
\newcommand{\etalchar}[1]{$^{#1}$}
\def\zh{Zh}\def\yu{Yu}\def\ya{Ya}

\end{document}